\newif\ifpictures
\numberwithin{equation}{section}
\newtheorem{thm}{Theorem}
\newtheorem{example}[thm]{Example}
\newtheorem{prop}[thm]{Proposition}
\newtheorem{lemma}[thm]{Lemma}
\newtheorem{cor}[thm]{Corollary}
\newtheorem{definition}[thm]{Definition}
\newtheorem{Bezout}[thm]{Amoeba B{\'e}zout Theorem}
\newtheorem{Bernstein}[thm]{Amoeba Bernstein Theorem}
\numberwithin{thm}{section}
\newcounter{FNC}[page]
\def\newfootnote#1{{\addtocounter{FNC}{2}$^\fnsymbol{FNC}$%
     \let\thefootnote\relax\footnotetext{$^\fnsymbol{FNC}$#1}}}
\newcommand{\struc}[1]{{\color{blue} #1}}
\definecolor{DarkGreen}{rgb}{0,0.65,0}
\newcommand{\C}{\mathbb{C}}
\newcommand{\N}{\mathbb{N}}
\newcommand{\R}{\mathbb{R}}
\newcommand{\Z}{\mathbb{Z}}
\newcommand{\lf}{\left}
\newcommand{\ri}{\right}
\newcommand{\ra}{\rightarrow}
\newcommand{\ovl}{\overline}
\newcommand\cA{{\ensuremath{\mathcal{A}}}\xspace}
\newcommand\cC{{\ensuremath{\mathcal{C}}}\xspace}
\newcommand\cF{{\ensuremath{\mathcal{F}}}\xspace}
\newcommand\cI{{\ensuremath{\mathcal{I}}}\xspace}
\newcommand\cO{{\ensuremath{\mathcal{O}}}\xspace}
\newcommand\cS{{\ensuremath{\mathcal{S}}}\xspace}
\newcommand\cT{{\ensuremath{\mathcal{T}}}\xspace}
\newcommand\cU{{\ensuremath{\mathcal{U}}}\xspace}
\newcommand\cV{{\ensuremath{\mathcal{V}}}\xspace}
\newcommand{\eps}{\varepsilon}
\newcommand{\alp}{\alpha}
\newcommand{\lam}{\lambda}
\newcommand{\sig}{\sigma}
\DeclareMathOperator{\conv}{conv}
\DeclareMathOperator{\trop}{trop}
\DeclareMathOperator{\Log}{Log}
\DeclareMathOperator{\New}{New}
\DeclareMathOperator{\Vol}{Vol}
\DeclareMathOperator{\re}{re}
\DeclareMathOperator{\im}{im}
\DeclareMathOperator{\ord}{ord}
\DeclareMathOperator{\Mink}{Mink}
\DeclareMathOperator{\NF}{NF}
\DeclareMathOperator{\Int}{int}
\DeclareMathOperator{\MV}{MV}
\title[Intersections of Amoebas]{Intersections of Amoebas}
\author{Martina Juhnke-Kubitzke}
\address{Martina Juhnke-Kubitzke, Universit\"at Osnabr\"uck, FB 6 -- Institut f\"ur Mathematik, Albrechtstr. 28a,
 49076 Osnabr\"uck, Germany}
 \email{juhnke-kubitzke@uni-osnabrueck.de}
\author{Timo de Wolff}
\address{Timo de Wolff, Texas A\&M University, Department of Mathematics, College Station, TX 77843-3368, 
 USA}
 \email{dewolff@math.tamu.edu}
\subjclass[2010]{14M25, 14T05, 52A39, 52B20 }
\keywords{Amoeba, Bernstein's Theorem, B{\'e}zout's Theorem, Intersection, Mixed Volume, Order Map, Spine, Tropical Geometry}
\begin{document}

\begin{abstract}
Amoebas are projections of complex algebraic varieties in the algebraic torus under a Log-absolute value map, which have connections to various mathematical subjects. While amoebas of hypersurfaces have been intensively studied in recent years, the non-hypersurface case is barely understood so far. 

We investigate intersections of amoebas of $n$ hypersurfaces in $(\mathbb{C}^*)^n$, which are canonical supersets of amoebas given by non-hypersurface varieties. Our main results are amoeba analogs of \emph{Bernstein's Theorem} and \emph{B{\'e}zout's Theorem} providing an upper bound for the number of connected components of such intersections. Moreover, we show that the \emph{order map} for hypersurface amoebas can be generalized in a natural way to intersections of amoebas. In particular, analogous to the case of amoebas of hypersurfaces, the restriction of this generalized order map to a single connected component is still $1$-to-$1$.
\end{abstract}

\maketitle

\section{Introduction}
 Let \struc{$I$} be an ideal generated by finitely many \struc{\textit{Laurent polynomials}} $\struc{f_1,\ldots,f_k} \in \struc{\C[\mathbf{z}^{\pm 1}]} := \C[z_1^{\pm 1},\ldots,z_n^{\pm 1}]$ and let  $\struc{\cV(I)} \subseteq \struc{(\C^*)^n} := (\C \setminus \{0\})^n$ be the  corresponding variety. The \textit{\struc{amoeba}} \struc{$\cA(I)$} of $I$, as originally defined by Gelfand, Kapranov, and Zelevinsky \cite{Gelfand:Kapranov:Zelevinsky}, is the image of $\cV(I)$ under the $\Log$-absolute map given by 
\begin{eqnarray}
\label{Equ:LogMap}
	\struc{\Log|\cdot|}: \ \lf(\C^*\ri)^n \to \R^n, \quad (z_1,\ldots,z_n) \mapsto (\log|z_1|,
\ldots, \log|z_n|) \, .
\end{eqnarray}
If $I = \langle f \rangle$, then we write \struc{$\cA(f)$} for simplicity.\\
Amoebas became prominent during the last twenty years since they provide a natural connection between algebraic geometry and tropical geometry; see \cite{deWolff:AmoebaTropicalizationSurvey,Maclagan:Sturmfels,Mikhalkin:Survey} for an overview. Furthermore, amoebas are objects with a rich structure themselves and there exist connections to numerous other mathematical subjects like complex analysis \cite{Forsberg:Passare:Tsikh}, nonnegativity of real polynomials \cite{Iliman:deWolff:Circuits}, crystal shapes \cite{Kenyon:Okounkov:Sheffield}, the topology of real curves \cite{Mikhalkin:Annals}, and statistical thermodynamics \cite{Passare:Pochekutov:Tsikh}. For an overview about amoeba theory see \cite{deWolff:Diss,deWolff:AmoebaTropicalizationSurvey,Mikhalkin:Survey,Passare:Tsikh:Survey,Rullgard:Diss}.

Though amoebas of hypersurfaces, i.e., amoebas associated to a single Laurent polynomial, have been intensively studied in recent years, the non-hypersurface case is still barely understood. For almost all properties that are true in the hypersurface case it is not known if they still hold for arbitrary varieties. One exception are statements regarding convexity, which were recently shown by Nisse and Sottile; see \cite{Nisse:Sottile:HigherConvexity}. Another one of the few results concerning amoebas of ideals that are not principal was shown by Purbhoo. It states that the amoeba of an arbitrary ideal $I \subseteq \C[\mathbf{z}^{\pm 1}]$ can be written as the intersection of the amoebas of all the elements of $I$; see \cite[Corollary 5.6.]{Purbhoo}:
\begin{eqnarray}
 \cA(I) & = & \bigcap_{f \in I} \cA(f). \label{Equ:PurbhooAmoebaIdeals}
\end{eqnarray}

Since this result is not useful from a computational point of view, the question about the existence of an \struc{\textit{amoeba basis}} arose in the article \cite{Schroeter:deWolff} by Schroeter and the second author. Here, an amoeba basis refers to a finite set of Laurent polynomials $f_1,\ldots,f_k$ such that $\langle f_1,\ldots,f_k \rangle = I$ and $\cA(I) = \bigcap_{j = 1}^k \cA(f_k)$. So far, amoeba bases are known to exist only in very special cases \cite{Nisse:AmoebaBasesZeroDimensional,Schroeter:deWolff} while a recent result by Nisse claims non-existence in general \cite{Nisse:AmoebaBasesNonExistence}. By \eqref{Equ:PurbhooAmoebaIdeals}, however, the inclusion $\cA(I) \subseteq \bigcap_{j = 1}^k \cA(f_k)$ holds  for \textit{every} collection $f_1,\ldots,f_k$ with $\langle f_1,\ldots,f_k \rangle \subseteq I$ and it is reasonable to expect that information about  $\bigcap_{j = 1}^k \cA(f_k)$ also provides information about $\cA(I)$. 
Phrased differently, understanding finite intersections of hypersurface amoebas is an essential interim stage for understanding amoebas of arbitrary ideals. Another key advantage of this approach is that finite intersections of hypersurface amoebas turn out to be much more accessible than amoebas of arbitrary ideals. This serves as the key motivation for this article. 

\bigskip
Let $\struc{\cF}:=\{f_1,\ldots,f_n\}\subseteq\C[\mathbf{z}^{\pm 1}]$ be a collection of $n$-variate complex Laurent polynomials. In this article, we study the intersection of their corresponding amoebas $\cA(f_1),\ldots,$ $\cA(f_n)\subseteq\R^n$. 

We show that intersections $\struc{\cI(\cF)}:=\bigcap_{j = 1}^n \cA(f_j)$ preserve a significant amount of the amoeba structure from the hypersurface case. Moreover, these intersections carry an additional interesting and rich combinatorial structure on their own.

\begin{figure}
\ifpictures
\includegraphics[width=0.4\linewidth]{./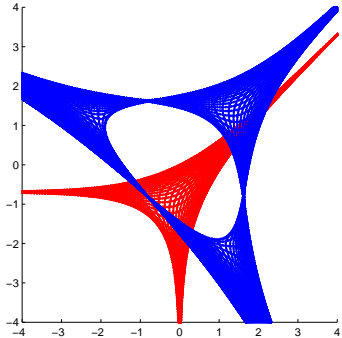}
\fi
\caption{An approximation of the amoebas $\cA(f_1)$ and $\cA(f_2)$ for $f_1 := 2z_1 + z_2 + 1$ and $f_2 := z_1^2z_2 + z_1z_2^2 + 5 z_1z_2 + 1$. The intersection $\cA(f_1)\cap\cA(f_2)$ consists of two connected components.}
\label{Fig:AmoebaIntersection1}
\end{figure}

Since, in general, $\cI(\cF)$ consists of several connected components, we focus on the study of the combinatorics of these components; see Figure \ref{Fig:AmoebaIntersection1} and Section \ref{Sec:Combinatorics}. In particular, the convex hull of each of these components is proven to be a simple polytope, which, in the sequel, will be referred to as \struc{\textit{intersection polytope}}. A natural question is, how many connected components such an intersection $\cI(\cF)$ can have. Our first main result provides an upper bound for this quantity.  More precisely, as an analogue to the classical Bernstein Theorem, we show the following \struc{\textit{Amoeba Bernstein Theorem}}; see Theorem \ref{Thm:AmoebaBernstein} for the detailed version.

\begin{thm}
Let $\cF=\{f_1,\ldots,f_n\}\subseteq \C[\mathbf{z}^{\pm 1}]$ be a generic collection of Laurent polynomials. The number of connected components of $\cI(\cF)$ is bounded from above by the mixed volume $\MV(\New(f_1),\ldots,\New(f_n))$ of the Newton polytopes of $f_1,\ldots,f_n$.  
Moreover, this bound is optimal in the following sense: If one deformation retracts every amoeba $\cA(f_j)$ in $\cI(\cF)$ to its spine $\cS(f_j)$, then the number of connected components of the intersection of the deformed amoebas converges to $\MV(\New(f_1),\ldots,\New(f_n))$. 
\end{thm}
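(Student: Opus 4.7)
The plan is to reduce the statement to the tropical Bernstein theorem via a deformation of each amoeba onto its spine. Recall that by Passare--Rullg{\aa}rd the spine $\cS(f_j) \subseteq \cA(f_j)$ is a strong deformation retract of $\cA(f_j)$, and coincides with the tropical hypersurface defined by $\trop(f_j)(x) = \max_{\alpha \in \supp(f_j)}(\log|c_{j,\alpha}| + \langle \alpha, x \rangle)$, with Newton polytope $\New(f_j)$. Under the genericity hypothesis on $\cF$, the spines lie in tropical general position in $\R^n$, so the tropical Bernstein theorem gives that $\bigcap_{j=1}^n \cS(f_j)$ is a transverse, stable intersection consisting of exactly $\MV(\New(f_1),\ldots,\New(f_n))$ points.

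To interpolate between $\cI(\cF)$ and $\bigcap_j \cS(f_j)$, I would introduce a nested, Hausdorff-continuous one-parameter family $\cA_t(f_j) \subseteq \R^n$, $t \in [0,1]$, with $\cA_0(f_j) = \cA(f_j)$ and $\cA_1(f_j) = \cS(f_j)$; a concrete model is the thickening $\cA_t(f_j) := \{x \in \cA(f_j) : \dist(x, \cS(f_j)) \le (1-t) D_j\}$ with $D_j$ exceeding the diameter of $\cA(f_j)$ relative to its spine. Setting $\cI_t := \bigcap_j \cA_t(f_j)$, the family $(\cI_t)_{t \in [0,1]}$ is then nested and Hausdorff-continuous, with $\cI_t$ converging to the finite set $\bigcap_j \cS(f_j)$ as $t \to 1$. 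From this the second (optimality) statement of the theorem follows immediately: for $t$ sufficiently close to $1$, $\cI_t$ is a disjoint union of $\MV$ small contractible neighborhoods of the spine-intersection points, so the number of connected components of $\cI_t$ equals $\MV$ in the limit.

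For the upper bound $\#\pi_0(\cI(\cF)) \le \MV$, I would combine this deformation with the generalized order map developed in Section~\ref{Sec:Combinatorics}, which is $1$-to-$1$ on components. The goal is to show that the order map factors through (and so is bounded in cardinality by) the spine-intersection set; equivalently, that every connected component of $\cI(\cF)$ encloses at least one point of $\bigcap_j \cS(f_j)$. The main obstacle is precisely this last claim: nestedness of $(\cI_t)_t$ alone does not prevent components from splitting as $t$ increases, so it does not directly yield a monotone component count. Closing this gap will require exploiting the simple-polytope structure of the intersection polytopes together with the combinatorics of the generalized order map, leveraging the genericity assumption on $\cF$ to exclude degenerate configurations in which a component of $\cI(\cF)$ encircles no spine-intersection point.
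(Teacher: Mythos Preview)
Your proposal correctly identifies the target statement --- every connected component of $\cI(\cF)$ contains a point of $\bigcap_{j}\cS(f_j)$ --- but explicitly leaves it as an unresolved ``main obstacle''. This is the entire content of the upper bound; without it you have proved nothing beyond the optimality clause. Your suggested route via the generalized order map and the simple-polytope structure of the intersection polytopes is not how the paper proceeds, and it is unclear how it would: the order map is defined on vertices of $P_K$ and takes values in $\New(\cF)\cap\Z^{n\times n}$, so its injectivity gives no direct count against $\MV(\New(f_1),\ldots,\New(f_n))$ or against the number of spine-intersection points. (Also, the order map is developed in Section~\ref{Sec:OrderMap}, not Section~\ref{Sec:Combinatorics}.)

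The paper closes the gap by a direct topological argument, independent of the order map. First (Lemma~\ref{Lem:IntersectionTwoComplementComponents}), any component $K$ must meet the closures of at least two distinct complement components $E_{\alpha}(f_\ell)$ and $E_{\beta}(f_\ell)$ of each $\cA(f_\ell)$; otherwise convexity would force $K\subseteq\partial\overline{E_\alpha(f_\ell)}$, contradicting $\dim K=n$. Then (Theorem~\ref{Thm:SpineIntersectsComponents}) one shows inductively over $|J|$ that $K\cap\bigcap_{\ell\in J}\cS(f_\ell)\neq\emptyset$: a path in $K$ (or in the lower-dimensional slice $Q=K\cap\bigcap_{\ell<k}\cS(f_\ell)$) joining points on $\partial\overline{E_\alpha(f_k)}$ and $\partial\overline{E_\beta(f_k)}$ must cross the deformation retract $\cS(f_k)$. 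This is the missing idea in your plan; no nested-family or monotonicity argument is needed.

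Two further points. Your description of the spine is incorrect: $\cS(f_j)$ is the tropical hypersurface of $\bigoplus_\alpha r_\alpha\odot \mathbf{x}^\alpha$ with Ronkin coefficients $r_\alpha$ (Equation~\eqref{Equ:RonkinCoefficient}), not of $\max_\alpha(\log|c_{j,\alpha}|+\langle\alpha,x\rangle)$; the latter is a different tropicalization and need not even lie inside $\cA(f_j)$. And genericity of $\cF$ as defined in Section~\ref{Sec:Combinatorics} does not assert that the spines are in tropical general position, so one must pass through the stable intersection (as the paper does) rather than assume a transverse finite intersection outright.
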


The term ``generic collection of Laurent polynomials'' will be made precise in Section \ref{Sec:Combinatorics}.  We remark that this theorem is highly non-obvious since it is unclear whether every connected component of $\cI(\cF)$ contains a point, that is the projection of a point in $\cV(\langle f_1,\ldots,f_n \rangle)$ with respect to the $\Log|\cdot|$-map. Therefore, no obvious upper bound for the number of connected components can be obtained directly from the classical Bernstein Theorem. As an immediate consequence we obtain the following \struc{\textit{Amoeba B{\'e}zout Theorem}}; see Theorem \ref{Cor:AmoebaBezout} for the detailed version.

\begin{thm}
Let $\mathcal{F}=\{f_1,\ldots,f_n\} \subseteq \C[\mathbf{z}^{\pm 1}]$ be a generic collection of Laurent polynomials. The number of connected components of $\cI(\cF)$ is smaller than  or equal to the product of the total degrees of the $f_j$.
\label{Thm:AmoebaBezout}
\end{thm}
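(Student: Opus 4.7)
The plan is to deduce this directly from the Amoeba Bernstein Theorem, exactly mirroring the classical derivation of B{\'e}zout's Theorem from Bernstein's Theorem in toric intersection theory. By the Amoeba Bernstein Theorem stated just before, the number of connected components of $\cI(\cF)$ is already bounded above by $\MV(\New(f_1),\ldots,\New(f_n))$. Hence, everything reduces to the purely convex-geometric estimate
\[
 \MV(\New(f_1),\ldots,\New(f_n)) \;\le\; \prod_{j=1}^n \tdeg(f_j),
\]
in which no amoebas or varieties appear anymore.

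The main step is a comparison with the standard $n$-simplex $\Delta := \conv\{0,e_1,\ldots,e_n\}$. Set $d_j := \tdeg(f_j)$. After multiplying each $f_j$ by a suitable Laurent monomial (which does not alter $\cV(f_j)\subseteq(\C^*)^n$, merely translates $\New(f_j)$, and leaves the mixed volume unchanged since $\MV$ is translation invariant in each argument separately), I may assume $\New(f_j) \subseteq d_j\cdot\Delta$ — this is just the statement that $f_j$ has total degree $d_j$. Monotonicity of mixed volume in each argument then gives
\[
 \MV(\New(f_1),\ldots,\New(f_n)) \;\le\; \MV(d_1\Delta,\ldots,d_n\Delta),
\]
and multilinearity together with the standard identities $\MV(P,\ldots,P) = n!\,\Vol(P)$ and $\Vol(\Delta) = 1/n!$ yields
\[
 \MV(d_1\Delta,\ldots,d_n\Delta) \;=\; \Big(\prod_{j=1}^n d_j\Big)\,\MV(\Delta,\ldots,\Delta) \;=\; \prod_{j=1}^n d_j,
\]
which chains with the inequality above to complete the proof.

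There is essentially no obstacle beyond this classical polytopal chain of inequalities, since all substantive work has been done in the Amoeba Bernstein Theorem. The only bookkeeping point is the convention for the total degree of a Laurent polynomial; this is handled without cost by the translation invariance of mixed volume used above, which is why the monomial normalization step is harmless.
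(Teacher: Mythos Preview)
Your argument is correct. Both you and the paper obtain the statement as a corollary, but through different endpoints. You invoke the Amoeba Bernstein Theorem and then bound $\MV(\New(f_1),\ldots,\New(f_n))$ by $\prod_j d_j$ via monotonicity, translation invariance, and multilinearity of mixed volume against the dilated simplices $d_j\Delta$. The paper instead returns directly to Theorem~\ref{Thm:SpineIntersectsComponents}, bounds $c$ by the number of (stable) intersection points of the spines $\cS(f_j)$, and then applies the \emph{tropical} B\'ezout theorem to these tropical hypersurfaces to get $c \le \prod_j \deg(\mathrm{SpineT}(f_j)) \le \prod_j \deg(f_j)$. Your route is a bit more self-contained, needing only classical convex-geometric facts about mixed volumes rather than the tropical B\'ezout theorem; the paper's route, on the other hand, produces the sharper intermediate bound $c \le \prod_j \deg(\mathrm{SpineT}(f_j))$, which can be strictly smaller than $\prod_j \deg(f_j)$ whenever some lattice points of $\New(f_j)$ fail to lie in the image of the order map.
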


In Section \ref{Sec:OrderMap}, we construct a generalization of the \struc{\textit{order map}} of hypersurface amoebas to our setting. The usual order map, introduced by Forsberg, Passare, and Tsikh \cite{Forsberg:Passare:Tsikh}, relates the components of the complement of a hypersurface amoeba $\cA(f)$ to the lattice points in the corresponding Newton polytope $\New(f)$; see Section \ref{SubSec:OrderMap} for details. Given a collection $\cF$ of $n$ Laurent polynomials, we define a natural generalization of the order map to the vertex sets of the intersection polytopes of $\cF$ and thereby also to the polytope which is the convex hull of the entire intersection $\cI(\cF)$. 
We show that several properties of the order map for the hypersurface case are preserved in this more general setting; see Theorems \ref{Thm:OrderMapConnectivityComponents} and \ref{Thm:NormalFan}, as well as Corollary \ref{Cor:OrderMapInjectivityTotalConvexHull}. The next theorem summarizes those results. The notion of a \struc{\textit{mixed normal cone}}, which is used in this statement, will be explained in Section \ref{Sec:OrderMap}.

\begin{thm}
Let $\cF:=\{f_1,\ldots,f_n\} \subseteq \C[\mathbf{z}^{\pm 1}]$ be a generic collection of Laurent polynomials. Let $K$ be a  connected component of $\cI(\cF)$, let $P_K := \conv(K)$ be the corresponding intersection polytope  and let $P := \conv\left(\cI(\cF)\right)$. Then the following statements hold:
\begin{enumerate}
 \item[(a)] There exists a \textit{generalized order map} from the vertices of $P_K$ and $P$, respectively, to $(\New(f_1) \times \cdots \times \New(f_n)) \cap \Z^{n \times n}$, that is injective on $P_K$ and on $P$, respectively.
 \item[(b)] The vertices of $P$ are in $1$-to-$1$-correspondence with those vertices of the Minkowski sum $\New(f_1) + \cdots + \New(f_n)$ which have a \textit{mixed} normal cone.
\end{enumerate}
\end{thm}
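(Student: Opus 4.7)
My plan is to build the generalized order map vertex by vertex out of the classical hypersurface order maps of the $\cA(f_j)$, to extract injectivity from the Forsberg--Passare--Tsikh theorem, and then to identify the outer normal cones at vertices of $P$ with the mixed maximal cones in the common refinement of the normal fans of $\New(f_1),\ldots,\New(f_n)$.

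For part~(a), I would start from a vertex $v$ of $P_K$ and consider its outer normal cone $N_v\subseteq\R^n$. Since $P_K$ is simple by the results of Section~\ref{Sec:Combinatorics}, $N_v$ is simplicial and full-dimensional, and exactly $n$ facets of $P_K$ meet at $v$; by the genericity hypothesis each such facet lies on $\partial\cA(f_j)$ for a unique index $j$. For any $u$ in the relative interior of $N_v$ and any sufficiently small $\eps>0$, the point $v+\eps u$ should then lie in a uniquely determined complement component $C_j(v)$ of $\cA(f_j)$ for every $j$, independent of $u$ because $N_v$ is connected and distinct complement components of $\cA(f_j)$ are separated by $\partial\cA(f_j)$. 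The generalized order map is then defined by
\[
\ord_\cF(v) \ := \ \bigl(\ord(C_1(v)),\ldots,\ord(C_n(v))\bigr) \ \in \ \bigl(\New(f_1)\times\cdots\times\New(f_n)\bigr)\cap \Z^{n\times n},
\]
and the analogous construction applies to a vertex of $P$. For injectivity, two distinct vertices $v\neq v'$ with the same image would satisfy $C_j(v)=C_j(v')$ for every $j$ by the classical Forsberg--Passare--Tsikh theorem; a local analysis at $v$ and $v'$ combined with simplicity of $P_K$ (respectively of $P$) would then yield the contradiction $v=v'$.

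For part~(b), I would use that the normal fan of $\New(f_1)+\cdots+\New(f_n)$ is the common refinement of the individual normal fans. At a vertex $v$ of $P$, the complement components $C_j(v)$ from~(a) are unbounded, since moving to infinity in the interior of $N_v$ never re-enters $\cI(\cF)$; consequently each $\ord_j(v)$ is a vertex of $\New(f_j)$. Combined with the local factorization $N_v=\bigcap_{j=1}^n N_{\ord_j(v)}(\New(f_j))$ — which is essentially the same local statement already needed to identify $N_v$ in~(a) — this shows that $\sum_j \ord_j(v)$ is a mixed vertex of the Minkowski sum in the sense of Section~\ref{Sec:OrderMap}. Conversely, I would recover a vertex $v$ of $P$ from any mixed vertex of the Minkowski sum by reading the data from the full-dimensional intersection $\bigcap_j N_{a_j}(\New(f_j))$ and invoking the injectivity from~(a) to pin down $v$ uniquely.

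The main obstacle is the local factorization $N_v=\bigcap_{j=1}^n N_{\ord_j(v)}(\New(f_j))$ of the outer normal cone at a vertex. This requires a careful local analysis of how the boundaries $\partial\cA(f_j)$ meet transversally at $v$, using the genericity hypothesis to rule out degenerate configurations and the classical fact that the asymptotic outer-normal behaviour of $\partial\cA(f_j)$ is controlled by the normal fan of $\New(f_j)$.
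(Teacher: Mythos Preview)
Your outline points in the right direction, but two of the steps you flag as routine are in fact the substance of the argument, and the paper handles them differently.

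\textbf{Injectivity.} Saying that ``a local analysis at $v$ and $v'$ combined with simplicity of $P_K$ would yield $v=v'$'' is not a proof. The paper's argument is concrete and uses the \emph{strict} convexity of the complement components: if $\ord_\cF(p)=\ord_\cF(q)=(\alpha(1),\ldots,\alpha(n))$, then the segment $[p,q]$ lies in every $\overline{E_{\alpha(j)}(f_j)}$ by convexity; strict convexity then forces $q+\eps(q-p)\in\cA(f_j)$ for all $j$ and small $\eps>0$, so $q+\eps(q-p)\in\cI(\cF)\subseteq P_K$, contradicting that $q$ is a vertex. No normal-cone bookkeeping is needed, and simplicity plays no role here.

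\textbf{Unboundedness of $C_j(v)$ and the factorization $N_v=\bigcap_j N_{\ord_j(v)}(\New(f_j))$.} Your argument that ``moving to infinity in the interior of $N_v$ never re-enters $\cI(\cF)$'' only shows that you eventually leave the \emph{intersection}; it does not show you stay in the particular complement component $C_j(v)$ of $\cA(f_j)$, which is what you need to conclude that $\ord_j(v)$ is a vertex of $\New(f_j)$. More seriously, the factorization of $N_v$ you call the ``main obstacle'' is never established, and the paper does \emph{not} prove or use it. Instead, the paper runs a direct case analysis: assuming a vertex $p$ of $P$ has $\ord_\cF(p)=(\alpha(1),\ldots,\alpha(n))$ with either (i) some $\alpha(i)$ not a vertex of $\New(f_i)$, or (ii) all $\alpha(j)$ vertices but the cone $\bigcap_j\NF_{\alpha(j)}(f_j)$ not mixed, the paper exhibits a one-dimensional set $T=\bigcap_{j\neq i}\partial\overline{E_{\alpha(j)}(f_j)}$ and shows it meets $\partial\overline{E_{\alpha(i)}(f_i)}$ in two points of the same order, contradicting the injectivity already proved. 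The converse direction uses the GKZ fact that $E_{v_j}(f_j)$ contains a translate of $\NF_{v_j}(f_j)$ to locate the vertex. So the machinery you propose (normal-cone factorization) is replaced in the paper by convexity/strict-convexity arguments on the complement components themselves.
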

\medskip

The article is organized as follows. In Section \ref{Sec:Preliminaries} we fix the essential notation and provide necessary background material. This includes facts from convex geometry and about amoebas, in particular about the order map and the spine. Moreover, we provide a short review of mixed volumes as well as of the classical and tropical versions of Bernstein's Theorem. In Section \ref{Sec:Combinatorics} we discuss combinatorial properties of intersections of hypersurface amoebas and of their associated intersection polytopes. The \textit{Amoeba Bernstein Theorem} is the main result of this section. In Section \ref{Sec:OrderMap} we define the generalized order map and discuss its properties, including the injectivity statements mentioned above.

\section*{Acknowledgements}
We thank Jens Forsg{\aa}rd and the anonymous referee for their helpful comments. \\
The first author was supported by the German Research Council DFG-GRK~1916. 
The second author was partially supported by GIF Grant no.\ 1174/2011, DFG project MA 4797/3-2 and DFG 
project TH 1333/2-1.

\section{Preliminaries}
\label{Sec:Preliminaries}

\subsection{Convex geometry} \label{SubSec:Polytopes}
Throughout most parts of this article, we assume that the reader has some basic knowledge from discrete and convex geometry. For background information we recommend \cite{Ziegler:Book} as a reference. 
As a service to the reader, we recall notions of some subjects that are less well-known and of particular importance for this article. 
A set $C\subseteq \R^n$ is called \struc{\textit{strictly convex}} if $C$ is convex and for any distinct points $p,q$ in the boundary $\partial ( C)$ of $C$ the line segment through $p$ and $q$ intersects $\partial (C)$ only in $p$ and $q$. In particular, a strictly convex set does not contain any line segment in its boundary. We will use that the intersection of strictly convex sets is always strictly convex. 
We recall that an \struc{\textit{extreme point}} $p$ of a convex set $C$ is a point $p\in C$ such that, whenever $p=\lambda s+(1-\lambda)t $ for points $s,t\in C$ and $0\leq \lambda\leq 1$, it must hold that $s=p$ or $t=p$. We will use in Section \ref{Sec:Combinatorics} that, by the Krein-Milman theorem, any compact and convex set in $\R^n$ is the convex hull of its extreme points \cite{Krein:Milman}. 
For a set $S\subseteq \R^n$ (not necessarily convex), the \struc{\textit{relative interior}} of $S$ is the interior of $S$ with respect to its affine hull. 

Given a Laurent polynomial $\struc{f}:=\sum_{a\in \Z^n}\lambda_a \mathbf{z}^a\in \C[\mathbf{z}^{\pm 1}]$, where $\struc{\mathbf{z}^a}:=z_1^{a_1}\cdots z_n^{a_n}$ for $\struc{a}:=(a_1,\ldots,a_n)\in \Z^n$,  the set $\struc{A} := \{a~:~\lambda_a\neq 0\}$ is called the \struc{\textit{support}} of $f$.  The \struc{\textit{Newton polytope}} $\New(f)$ of $f$ is the lattice polytope 
\begin{equation*}
\struc{\New(f)} \ := \ \conv(A) \ \subseteq \ \R^n.
\end{equation*}
In what follows, we refer to the set $\struc{(\C^*)^A}$ of all Laurent polynomials with support $A$ as the \struc{\textit{configuration space}} corresponding to $A$. For usual polynomials $f \in \C[\mathbf{z}]$ we consider analogously the configuration space $\struc{\C^A}$. 
Given a polytope $P\subseteq \R^n$, we are frequently interested in its normal fan, which is defined as follows: To every non-empty face $G\in P$ one associates the following \struc{\textit{normal cone}}
\begin{equation*}
\struc{\NF_G(P)} \ := \ \{\mathbf{c}\in \R^n~:~G\subseteq \{\mathbf{y}\in P~:~\langle \mathbf{c},\mathbf{y}\rangle=\max_{\mathbf{w}\in P}\langle \mathbf{c},\mathbf{w}\rangle\}\},
\end{equation*}
where $\langle \cdot,\cdot \rangle$ denotes the standard scalar product in $\R^n$. 
The collection of these cones is called the \struc{\emph{normal fan}} of $P$, denoted by $\NF(P)$:
\begin{equation*}
\struc{\NF(P)} \ := \ \{\NF_G(P)~:~\emptyset\neq G\subseteq P \mbox{ face of } P\}.
\end{equation*}
If $P:=\New(f)$ is the Newton polytope of a Laurent polynomial $f\in \C[\mathbf{z}^{\pm 1}]$ and $G$ is a face of $\New(f)$, then we write \struc{$\NF_G(f)$} for $\NF_G(\New(f))$. Similarly, we write \struc{$\NF(f)$} for the normal fan of $\New(f)$.

\subsection{The Order Map}
\label{SubSec:OrderMap}
Given a Laurent polynomial $f\in \C[\mathbf{z^{\pm 1}}]$, the complement of its amoeba $\cA(f)$ consists of several bounded or unbounded connected components. As shown in \cite{Forsberg:Passare:Tsikh}, there exists a close connection between those components and lattice points in the corresponding Newton polytope $\New(f)$. The precise relation is given via the \struc{\emph{order map}} introduced by Forsberg, Passare, and Tsikh \cite[Definition 2.1]{Forsberg:Passare:Tsikh}:
\begin{eqnarray}
  \label{Equ:Ordermap}
\struc{\ord}: \R^n \setminus \cA(f) & \to & \R^n, \quad \mathbf{w} \mapsto (u_1,\ldots,u_n) \ \text{ with } \\
  u_j & := & \frac{1}{(2\pi i)^n} \int_{\Log|\mathbf{z}| = \mathbf{w}} \frac{z_j \partial_j f(\mathbf{z})}{f(\mathbf{z})}
    \frac{dz_1 \cdots dz_n}{z_1 \cdots z_n} \ \text{ for all } \ 1 \le j \le n \, .\nonumber
\end{eqnarray}

We write \struc{$\im(\ord(f))$} for the image of $\R^n \setminus \cA(f)$ under the order map. 

The order map can be understood as a multivariate analogue of the classical \textit{argument principle} from complex analysis. The following theorem by Forsberg, Passare and Tsikh \cite[Propositions 2.4 and 2.5]{Forsberg:Passare:Tsikh} provides the indicated connection between components of the complement of $\cA(f)$ and lattice points in $\New(f)$. 
\begin{thm}
Given a Laurent polynomial $f\in\C[\mathbf{z}^{\pm 1}]$ the image of the order map $\im(\ord(f))$ is contained in $\New(f) \cap \Z^n$. If $\mathbf{w},\mathbf{w}' \in \R^n\setminus \cA(f)$, then $\mathbf{w}$ and $\mathbf{w}'$ belong to the same component of the complement of $\cA(f)$ if and only if $\ord(\mathbf{w}) = \ord(\mathbf{w}')$.
\label{Thm:OrderMap}
\end{thm}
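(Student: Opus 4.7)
The first assertion splits into two parts: the integrality $\ord(\mathbf{w}) \in \Z^n$ and the inclusion $\ord(\mathbf{w}) \in \New(f)$. For integrality, I would parametrize the torus $T_\mathbf{w} := \{\mathbf{z}\in(\C^*)^n\,:\,\Log|\mathbf{z}| = \mathbf{w}\}$ by $z_k = e^{w_k + i\theta_k}$ and apply Fubini's theorem to \eqref{Equ:Ordermap}, performing the $z_j$-integration first. After the $z_j$ in the numerator of $\frac{z_j\partial_j f}{f}\cdot\frac{dz_1\cdots dz_n}{z_1\cdots z_n}$ cancels the corresponding factor in the volume form, the inner integral becomes
$$
\frac{1}{2\pi i}\int_{|z_j|=e^{w_j}}\frac{\partial_j f(\mathbf{z})}{f(\mathbf{z})}\,dz_j,
$$
which, by the one-variable argument principle, is an integer counting the zeros of the one-variable Laurent polynomial $z_j \mapsto f(\mathbf{z})$ inside the punctured disk $0 < |z_j| < e^{w_j}$, corrected by the order of the pole or zero at $0$. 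Since $\mathbf{w} \notin \cA(f)$, this polynomial does not vanish on $\{|z_j|=e^{w_j}\}$ for any choice of the remaining angles, so the inner integer is locally constant, and by connectedness of the remaining angular torus it is constant. Averaging against $\prod_{k\ne j}\frac{d\theta_k}{2\pi}$ yields $u_j \in \Z$. For $\ord(\mathbf{w}) \in \New(f)$, I would bound the same inner integer between $\min_a a_j$ and $\max_a a_j$ for $a \in \supp(f)$ (write $f = z_j^{\alpha_j}g(z_j,\ldots)$ with $g$ a polynomial in $z_j$ of bounded degree), obtaining the coordinate-wise inequalities, and then apply the analogous argument after a unimodular monomial change of variables for each primitive $\mathbf{c}\in\Z^n$ to obtain $\langle \mathbf{c},\ord(\mathbf{w})\rangle \le \max_{a\in\supp(f)}\langle \mathbf{c},a\rangle$, and hence $\ord(\mathbf{w}) \in \New(f)$.

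Next, the integrand in \eqref{Equ:Ordermap} depends continuously on $\mathbf{w}$ as long as $\mathbf{w} \notin \cA(f)$, because the torus $T_\mathbf{w}$ varies continuously and avoids $\cV(f)$; hence $\ord$ is continuous on $\R^n \setminus \cA(f)$. Combined with the integrality just established it is locally constant, hence constant on each connected component. This gives the ``only if'' direction of the second assertion.

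The reverse direction is the main obstacle. I would introduce the \emph{Ronkin function}
$$
N_f(\mathbf{w}) \ := \ \frac{1}{(2\pi i)^n} \int_{T_\mathbf{w}} \log|f(\mathbf{z})| \cdot \frac{dz_1 \cdots dz_n}{z_1 \cdots z_n},
$$
which is convex on $\R^n$, of class $C^1$ on $\R^n \setminus \cA(f)$, and satisfies $\grad N_f = \ord$ off the amoeba by differentiation under the integral sign. In particular $N_f$ is affine on every component of the amoeba complement with slope equal to the order there. Suppose distinct components $K_1, K_2$ had the same order $\alpha$, so that $N_f = \langle \alpha,\mathbf{w}\rangle + c_i$ on $K_i$. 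The subgradient inequalities $N_f(\mathbf{w}_2) \ge N_f(\mathbf{w}_1) + \langle\alpha,\mathbf{w}_2 - \mathbf{w}_1\rangle$ and its symmetric counterpart for $\mathbf{w}_i \in K_i$ force $c_1 = c_2 =: c$. Then $L(\mathbf{w}) := \langle\alpha,\mathbf{w}\rangle + c$ is a global lower bound for $N_f$ (by subgradient), equal to $N_f$ on $K_1 \cup K_2$, and by convexity also an upper bound on the segment joining $\mathbf{w}_1$ and $\mathbf{w}_2$; thus $N_f \equiv L$ on that segment. However, the segment must cross $\cA(f)$ (as $K_1 \ne K_2$), while $N_f$ is \emph{strictly} convex on any line segment meeting $\cA(f)$, because $\log|f|$ is plurisubharmonic and strictly so near $\cV(f)$, which translates into a strict Jensen inequality for the averaging integral defining $N_f$. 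This contradicts $N_f \equiv L$, so $K_1 = K_2$.

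The hardest step is clearly the strict convexity of $N_f$ across $\cA(f)$ in the last paragraph; the remainder is essentially Fubini plus the argument principle together with standard facts about convex functions and the subgradient inequality.
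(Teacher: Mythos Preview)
The paper does not prove this theorem at all: it is quoted verbatim as a result of Forsberg, Passare, and Tsikh \cite[Propositions~2.4 and~2.5]{Forsberg:Passare:Tsikh}, with no argument given. So there is no ``paper's own proof'' to compare against.

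That said, your proposal is essentially the standard proof from the cited source (and from Ronkin / Passare--Rullg{\aa}rd for the last step). The integrality via Fubini and the one-variable argument principle, the Newton-polytope bound via unimodular monomial substitutions, and the local constancy via continuity are all correct and routine. For the injectivity you invoke exactly the Ronkin-function machinery that the paper records as Theorem~\ref{Thm:RonkinProperties}; this is the intended mechanism.

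One small point worth tightening: Theorem~\ref{Thm:RonkinProperties} as stated asserts non-affinity of $R_f$ only on \emph{open} sets meeting $\cA(f)$, not on line segments, so your appeal to ``strict convexity on any line segment meeting $\cA(f)$'' is a shade stronger than what is quoted. The fix is easy and stays within the convexity argument you already set up: since $N_f \ge L$ globally with equality on the open sets $K_1$ and $K_2$, varying both endpoints over small balls in $K_1$ and $K_2$ shows $N_f \equiv L$ on an open tube joining them; Ronkin's theorem then forces this tube to miss $\cA(f)$, so $K_1$ and $K_2$ lie in the same component. With that adjustment the argument is complete.
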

By Theorem \ref{Thm:OrderMap}, every component of the complement of a given amoeba $\cA(f)$ corresponds to a unique lattice point in the Newton polytope $\New(f)$ of $f$.  We denote for each $\alp \in \New(f) \cap \Z^n$ its corresponding (possibly empty) component of the complement of $\cA(f)$ by $E_{\alpha}(f)$, i.e., 
\begin{eqnarray*}
	\struc{E_{\alpha}(f)} & := & \{\mathbf{w} \in \R^n \setminus \cA(f) \ : \ \ord(\mathbf{w}) = \alpha\}. \label{Equ:DefEalp}
\end{eqnarray*}
Points in $E_{\alpha}(f)$ are said to be of \struc{\textit{order} $\alpha$} and $E_\alp(f)$ is called \struc{\textit{the component of order} $\alp$} of the complement.\\

Besides the just described connection between components of the complement of $\cA(f)$ and lattice points in $\New(f)$, Gelfand, Kapranov and Zelevinsky \cite[Chapter 6]{Gelfand:Kapranov:Zelevinsky} showed that the connected components of the complement are also linked to the normal fan of $\New(f)$. Indeed, the precise relation is the following one:

\begin{thm}
Let $f \in \C[\mathbf{z}^{\pm 1}]$ be a Laurent polynomial with support set $A \subseteq \Z^n$. The set of vertices of $\New(f)$ is in bijective correspondence with a subset of the components of the complement of $\cA(f)$. Namely, let $\alp \in A$ be a vertex of $\New(f)$ and  $\NF_{\alp}(f)$ be the corresponding cone in $\NF(f)$. Then there exists a unique non-empty, unbounded component $E_{\alp}(f)$ in the complement of $\cA(f)$ that contains an affine translation of $\NF_{\alp}(f)$.
\label{Thm:GKZAmoebaVertex}
\end{thm}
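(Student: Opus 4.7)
The plan is to invoke the order map of Theorem~\ref{Thm:OrderMap} and construct, for each vertex $\alpha$ of $\New(f)$, a non-empty unbounded component of the complement of $\cA(f)$ of order $\alpha$ that contains an affine translate of $\NF_\alpha(f)$. Uniqueness of such a component is then immediate, since Theorem~\ref{Thm:OrderMap} tells us that the order map is constant on components and distinguishes different components.

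Fix a vertex $\alpha \in A$ and pick a vector $\mathbf{c}_0$ in the relative interior of $\NF_\alpha(f)$. By definition of the normal fan, there exists $\eps > 0$ with $\langle \mathbf{c}_0, \alpha - \beta\rangle \geq \eps$ for every $\beta \in A \setminus \{\alpha\}$. I would then factor the polynomial as
\[ f(\mathbf{z}) \ = \ \lambda_\alpha \mathbf{z}^\alpha \cdot (1 + h(\mathbf{z})), \qquad h(\mathbf{z}) \ := \ \sum_{\beta \in A \setminus \{\alpha\}} \frac{\lambda_\beta}{\lambda_\alpha} \mathbf{z}^{\beta - \alpha}. \]
For $\mathbf{w} = R\mathbf{c}_0 + \mathbf{c}$ with $\mathbf{c} \in \NF_\alpha(f)$ and any $\mathbf{z}$ with $\Log|\mathbf{z}| = \mathbf{w}$, the estimate
\[ |h(\mathbf{z})| \ \leq \ \sum_{\beta \neq \alpha} \left|\frac{\lambda_\beta}{\lambda_\alpha}\right| e^{-\langle \mathbf{w},\, \alpha - \beta\rangle} \]
holds. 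Since $\langle \mathbf{c}, \alpha - \beta\rangle \geq 0$ for every $\mathbf{c} \in \NF_\alpha(f)$ and $\langle R\mathbf{c}_0, \alpha - \beta\rangle \geq R\eps$, the right hand side tends to $0$ uniformly in $\mathbf{c}$ as $R\to\infty$. Thus, for $R$ sufficiently large, $|h| < 1$ (and hence $f \neq 0$) on the entire preimage of the translated cone $R\mathbf{c}_0 + \NF_\alpha(f)$, so this unbounded connected set is disjoint from $\cA(f)$.

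To identify the order, I would write $\log f = \log \lambda_\alpha + \langle \alpha, \log \mathbf{z}\rangle + \log(1 + h)$ and use the convergent power series expansion of $\log(1+h)$ to obtain a Laurent expansion $\log(1+h) = \sum_\gamma a_\gamma \mathbf{z}^\gamma$, where each $\gamma$ is a non-trivial non-negative integer combination of the vectors $\{\beta - \alpha : \beta \in A\setminus\{\alpha\}\}$. Applying $z_j\partial_j$ and integrating over the torus $\{\Log|\mathbf{z}| = \mathbf{w}\}$, the first two summands contribute exactly $\alpha_j$, while each monomial $\mathbf{z}^\gamma$ has nonzero exponent (the vectors $\beta - \alpha$ all lie in the open half-space $\{\mathbf{x} : \langle \mathbf{c}_0, \mathbf{x}\rangle < 0\}$ because $\alpha$ is a vertex, so no non-trivial non-negative combination can vanish) and therefore integrates to $0$. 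This yields $\ord(\mathbf{w}) = \alpha$ on the whole translated cone, which is accordingly contained in the single component $E_\alpha(f)$.

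The main obstacle is the termwise integration of the logarithmic series: it needs the uniform bound $|h| < 1$ together with dominated convergence on the compact torus to exchange summation and integration. Everything else is convex geometry (the strict separation coming from $\alpha$ being a vertex, which gives both the domination principle for the monomial $\lambda_\alpha \mathbf{z}^\alpha$ and the non-vanishing of the exponents $\gamma$) and direct invocation of Theorem~\ref{Thm:OrderMap}. The bijective correspondence in the statement is obtained by mapping each vertex $\alpha$ to the component $E_\alpha(f)$ just constructed, with injectivity inherited from the order map.
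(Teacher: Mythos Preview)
Your argument is correct and essentially complete. The domination estimate for the monomial $\lambda_\alpha\mathbf{z}^\alpha$ on the translated cone $R\mathbf{c}_0+\NF_\alpha(f)$ is the right mechanism, and the Laurent-series computation of the order (with the key observation that all exponents $\gamma$ lie in the open half-space $\{\langle\mathbf{c}_0,\cdot\rangle<0\}$ and hence are nonzero) is the standard way to identify the component as $E_\alpha(f)$. The justification for termwise integration via the uniform bound $|h|<1$ on the compact torus is exactly what is needed.

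There is, however, nothing to compare against: the paper does not prove Theorem~\ref{Thm:GKZAmoebaVertex} at all. It is quoted as a background result from Gelfand--Kapranov--Zelevinsky \cite[Chapter~6]{Gelfand:Kapranov:Zelevinsky} in the preliminaries section, without proof. Your write-up therefore supplies a self-contained argument where the paper simply cites the literature; it is in fact close in spirit to the original GKZ argument and to the treatment in Forsberg--Passare--Tsikh, so you are not doing anything exotic, but you are doing more than the paper does.
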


\subsection{Tropical Geometry and the Spine}
\label{SubSec:Tropical}
Looking at the image of an amoeba, one immediately observes that it has finitely many ``\textit{tentacles}'', which point in different directions. These tentacles point towards a set of points at infinity, which is called the \textit{logarithmic limit set}. This set was introduced by Bergman \cite{Bergman}. For a Laurent polynomial $f$ with amoeba $\cA(f)$ and any positive real number $r\in \R$ one defines 
\begin{eqnarray}
	\struc{\cA_r(f)}	& :=	& (1/r \cdot \cA(f)) \cap \mathbb{S}^{n-1}. \label{Equ:ScaledAmoeba}
\end{eqnarray}
Here, $1/r \cdot \cA(f) := \{1/r \cdot \mathbf{w} : \mathbf{w} \in \cA(f)\}$ and $\mathbb{S}^{n-1}$ denotes the $(n-1)$-dimensional unit sphere $\struc{\mathbb{S}^{n-1}} := \{\mathbf{w} \in \R^n \ : \ ||\mathbf{w}||_2 = 1\}$. The \struc{\textit{logarithmic limit set}} $\cA_\infty(f)$ is defined as 
\begin{eqnarray*}
 \struc{\cA_\infty(f)} & := & \lim_{r \ra \infty} \cA_r(f).
\end{eqnarray*}
Bieri and Groves showed that $\cA_\infty(f)$ is a rational, polyhedral fan on the unit sphere; \cite{Bieri:Groves}, see also \cite[Section 1.4]{Maclagan:Sturmfels}. Moreover, amoebas $\cA(f)$ are unbounded, and their complements are open. These facts will be used in Section \ref{SubSec:Tropical}.\\

In what follows, we introduce the \textit{spine} of an amoeba $\cA(f)$. In Section \ref{SubSec:Tropical}, we repeatedly use that the spine is both, a piecewise linear deformation retract of $\cA(f)$ and a tropical hypersurface.

First, we need to recall some notion from tropical geometry. For additional background on tropical geometry we refer to \cite{Maclagan:Sturmfels}. 
The \struc{\textit{tropical semiring} $(\R \cup \{-\infty\},\oplus,\odot)$} is defined by the operations
\begin{eqnarray*}
	\struc{a \oplus b} \ := \ \max\{a,b\}, & \text{ and } & \struc{a \odot b} \ := \ a + b.
\end{eqnarray*}
Note that $-\infty$ is the neutral element for the tropical addition $\oplus$. We remark that some authors prefer the minimum together with $+ \infty$ instead of the maximum  together with $-\infty$ as tropical addition. A \struc{\textit{tropical monomial}} is a function
\begin{eqnarray*}
	(\R \cup \{-\infty\})^n \ra \R \cup \{-\infty\}, \quad (z_1,\ldots,z_n) \mapsto \struc{b_a \odot \mathbf{z}^{a}} \ := \ b_a \odot z_1^{a_1} \odot \cdots \odot z_n^{a_n}
\end{eqnarray*}
with $b_a \in \R$ and $a:=(a_1,\ldots,a_n) \in \N^n$. In terms of classical operations a tropical monomial is the affine linear form $b_a + \langle \mathbf{z},a \rangle$. A \struc{\textit{tropical polynomial}} with \struc{\textit{support set}} $\struc{A} \subseteq \N^n$ is a finite tropical sum of tropical monomials, i.e., it is a function 
\begin{eqnarray*}
	(\R \cup \{-\infty\})^n \ra \R \cup \{-\infty\}, \quad (z_1,\ldots,z_n) \mapsto \struc{\bigoplus_{a \in A} b_a \odot \mathbf{z}^{a}} \ := \ \max_{a \in A} \{b_a + \langle \mathbf{z},a \rangle\},
\end{eqnarray*}
where $b_a\in \R\setminus \{0\}$. 
For a tropical polynomial $h$ as above its \struc{\textit{tropical hypersurface}} or \struc{\textit{tropical variety} $\cT(h)$} is defined as the set of points $\mathbf{x}$ in $(\R\cup\{-\infty\})^n$ such that the maximum of $\{b_a + \langle \mathbf{x},a \rangle~:~a\in A\}$ is attained at least twice. The tropical hypersurface $\cT(h)$ is a polyhedral complex, which is dual to  a (regular) subdivision of the Newton polytope of $h$.\\

The definition of the spine of an amoeba requires the definition of the \textit{Ronkin function} \cite{Ronkin}; see also \cite{Passare:Rullgard:Spine}. Let $\Omega$ be a convex open set in $\R^n$ and let $f \in \C[\mathbf{z}^{\pm 1}]$ be a Laurent polynomial that is defined on $\Log^{-1}|\Omega|$. The \struc{\textit{Ronkin function}} $R_f$ is defined by the integral
\begin{eqnarray*}
 \struc{R_f}: \Omega \to \R, \quad \mathbf{x} \mapsto \frac{1}{(2\pi i)^n} \int_{\Log^{-1}|\mathbf{x}|} \frac{\log|f(\mathbf{z})| dz_1 \ldots dz_n}{z_1 \ldots z_n}.
\end{eqnarray*}

The next theorem collects some important properties of $R_f$.

\begin{thm}(Ronkin \cite{Ronkin} / Passare, Rullg{\aa}rd \cite[Section 1]{Passare:Rullgard:Spine})
Let $f\in \C[\mathbf{z}^{\pm 1}]$ be a holomorphic function. Then $R_f$ is a convex function. If $U \subseteq \Omega$ is a connected open set, then the restriction of $R_f$ to $U$
is affine linear if and only if $U\cap \cA(f)=\emptyset$. If $\mathbf{x}$ is
in the complement of $\cA(f)$, then the gradient of $R_f(\mathbf{x})$ equals the order of $\mathbf{x}$.
\label{Thm:RonkinProperties}
\end{thm}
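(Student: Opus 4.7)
The plan is to exploit the real-coordinate form of $R_f$ together with the plurisubharmonicity of $\log|f|$. First, I would substitute $z_j = e^{x_j + i\theta_j}$, so that $dz_j/z_j = i\, d\theta_j$ on the torus fiber, rewriting
\begin{equation*}
R_f(\mathbf{x}) \;=\; \frac{1}{(2\pi)^n}\int_{[0,2\pi)^n} \log\bigl|f(e^{x_1+i\theta_1},\ldots,e^{x_n+i\theta_n})\bigr|\, d\theta_1\cdots d\theta_n.
\end{equation*}
This presentation is the backbone for all three claims. To prove the gradient identity $\nabla R_f(\mathbf{x}) = \ord(\mathbf{x})$ on $\R^n\setminus \cA(f)$, I would note that for such $\mathbf{x}$, $f$ is bounded away from $0$ on the compact torus $\{|\mathbf{z}| = e^{\mathbf{x}}\}$, so $\log|f|$ is smooth nearby and differentiation under the integral is justified. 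Using $\partial_{x_j}\log|f(e^{\mathbf{x}+i\theta})| = \RE(z_j \partial_j f/f)$ evaluated at $\mathbf{z}=e^{\mathbf{x}+i\theta}$, and converting back to $\mathbf{z}$-coordinates via $dz_k/z_k = i\, d\theta_k$, the partial derivative $\partial_{x_j} R_f(\mathbf{x})$ is recognized as exactly the $j$-th integral in the definition of $\ord$ in \eqref{Equ:Ordermap}.

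For the convexity, consider the pullback $h(\zeta_1,\ldots,\zeta_n) := \log|f(e^{\zeta_1},\ldots,e^{\zeta_n})|$ on the tube domain $\{(\zeta_1,\ldots,\zeta_n)\in\C^n : (\RE \zeta_1,\ldots,\RE \zeta_n)\in\Omega\}$. Since $\log|f|$ is plurisubharmonic on $\Log^{-1}(\Omega)$ and composition with a holomorphic map preserves this property, $h$ is plurisubharmonic. Averaging $h(\mathbf{x}+i\theta)$ over the torus yields a plurisubharmonic function depending only on $\mathbf{x}$, and a plurisubharmonic function on a tube domain that is invariant under imaginary translations descends to a convex function on the real base. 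Hence $R_f$ is convex.

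For the affine-linear characterization, one direction follows immediately from the gradient formula: if $U\cap\cA(f)=\emptyset$, then by Theorem \ref{Thm:OrderMap} the order map is constant on the connected open set $U$, so $\nabla R_f$ is constant there, and $R_f$ is affine linear on $U$. The main obstacle is the converse. My plan is to suppose $R_f(\mathbf{x}) = \langle \alpha,\mathbf{x}\rangle + c$ on $U$ and normalize via $g(\mathbf{z}) := f(\mathbf{z})\mathbf{z}^{-\alpha}$, reducing to the case $R_g \equiv c$ on $U$; if $U$ still meets $\cA(g)=\cA(f)$, pick $\mathbf{z}^{\star}\in\cV(g)$ with $\Log|\mathbf{z}^{\star}|\in U$ and restrict $g$ to a complex line through $\mathbf{z}^{\star}$ whose image under $\Log|\cdot|$ stays in $U$. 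Jensen's formula applied to the resulting one-variable holomorphic function shows that the averaged log-modulus must strictly increase across the radius where a zero occurs, contradicting the constancy of $R_g$. This equality-case analysis for the submean inequality of plurisubharmonic functions is the technical heart of the converse and the one place where more than the pointwise gradient identity is required.
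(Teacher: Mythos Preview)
The paper does not give its own proof of Theorem~\ref{Thm:RonkinProperties}; it is quoted as a preliminary result from Ronkin and from Passare--Rullg{\aa}rd, so there is no in-paper argument to compare your proposal against.

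On the merits of your sketch: the real-coordinate rewriting of $R_f$, the plurisubharmonicity argument for convexity, and the differentiation-under-the-integral computation identifying $\partial_{x_j}R_f$ with the $j$-th order integral are all standard and correct. The forward direction of the affine characterization is also fine, and your appeal to Theorem~\ref{Thm:OrderMap} for constancy of the order on a connected $U\subseteq\R^n\setminus\cA(f)$ is legitimate in the paper's logical order.

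The one place that needs tightening is the converse. When you restrict $g$ to a complex line through $\mathbf{z}^\star$ and apply Jensen's formula, the resulting one-variable circle average is \emph{not} $R_g$ restricted to the corresponding real line: it is the average over a single angular variable, whereas $R_g$ averages over the full $n$-torus. So a strict increase in the one-variable Jensen average does not by itself contradict constancy of $R_g$. The fix is straightforward: take the line in a coordinate direction, say $z_1\mapsto(z_1,z_2^\star,\ldots,z_n^\star)$, apply Jensen in $z_1$ for each fixed $(\theta_2,\ldots,\theta_n)$, and then average over the remaining torus variables to recover $R_g(x_1,x_2^\star,\ldots,x_n^\star)$. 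Since the zero locus of $g$ through $\mathbf{z}^\star$ projects to a set of positive measure in $(\theta_2,\ldots,\theta_n)$ (by analyticity and the implicit function theorem), the distributional second derivative $\partial_{x_1}^2 R_g$ picks up positive mass near $x_1^\star$, contradicting affineness. Once you insert this extra averaging step, your argument goes through and matches the approach in the cited sources.
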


Given a Laurent polynomial $f$ and a point $\alp$ in its support set $A$, we have seen in Section \ref{SubSec:OrderMap} that the corresponding component $E_\alp(f)$ of the complement of $\cA(f)$ is non-empty if and only if $\alp \in \im(\ord(f))$. For every $\alp \in \im(\ord(f))$ one defines the \struc{\textit{Ronkin coefficient}} of $\alp$ by
\begin{eqnarray}
	\struc{r_\alp}		& := & R_f(\mathbf{x}) - \langle \alp , \mathbf{x} \rangle \ \text{ for every } \mathbf{x} \in E_{\alp}(f),\label{Equ:RonkinCoefficient}
\end{eqnarray}
which is well-defined due to Theorem \ref{Thm:RonkinProperties}. The Ronkin coefficients give rise to the following tropical polynomial
\begin{eqnarray}
	\struc{\text{SpineT}(f)} 	& := &  \bigoplus_{\alp \, \in \, \im(\ord(f))} r_\alp \oplus \mathbf{x}^{\alp}. \label{Equ:SpineTrop}
\end{eqnarray}
Finally, the \struc{\textit{spine}} of $f$ is the tropical hypersurface $\cS(f)$ given by $\text{SpineT}(f)$, i.e., 
\begin{eqnarray*}
	\struc{\cS(f)}		& := & \cT(\text{SpineT}(f)).
\end{eqnarray*}
It was shown by Passare and Rullg{\aa}rd \cite{Passare:Rullgard:Spine} that the spine of a Laurent polynomial $f \in \C[\mathbf{z}^{\pm 1}]$ is a deformation retract of $\cA(f)$. Note that retracting an amoeba to its spine does not change the logarithmic limit set of the amoeba. For additional background about the spine and the relation between amoebas and tropicalizations we recommend \cite{deWolff:AmoebaTropicalizationSurvey, Passare:Tsikh:Survey}.

\subsection{Mixed Volumes and Bernstein's Theorem}

Recall that the \struc{\textit{Minkowski sum}} of  $n$-polytopes $P$ and $Q$ is defined as:
\begin{eqnarray*}
 \struc{P + Q} & := & \{\mathbf{p} + \mathbf{q} \ : \ \mathbf{p} \in P, \ \mathbf{q} \in Q\};
\end{eqnarray*}
see \cite[p. 28]{Ziegler:Book}. Following \cite[p. 205]{Gelfand:Kapranov:Zelevinsky}, we provide the following definition of \struc{\textit{mixed volumes}}. Let $P_1,\ldots,P_n$ be polytopes in $\R^n$ such that $\dim (P_1+\cdots+P_n)=n$. For $\lam \in \R$ let  $\struc{\lam P_i} := \{\lam \mathbf{p} \ : \ \mathbf{p} \in P\}$. Given a translation-invariant volume form $\Vol$ on $\R^n$, the expression
\begin{eqnarray}
 \struc{\Vol(\lam_1 P_1 + \cdots + \lam_n P_n)} \label{Equ:MixedVolume}
\end{eqnarray}
is a homogeneous polynomial in $\lam_1,\ldots,\lam_n$ of degree $n$.

\begin{definition}
The \struc{\emph{mixed volume} $\Vol(P_1,\ldots,P_n)$} is the coefficient of the monomial $\lam_1 \cdots \lam_n$ in the polynomial \eqref{Equ:MixedVolume}.
More explicitly, we have
\begin{eqnarray*}
 \Vol(P_1,\ldots,P_n) & := & \frac{1}{n!} \sum_{j = 1}^n (-1)^{n - j} \sum_{1 \leq i_1 < \cdots < i_j \leq n} \Vol(P_{i_1}+\cdots +P_{i_j}).
\end{eqnarray*}
\end{definition}

We denote by  \struc{$\MV$} the normalized volume form that is induced by the lattice $\Z^n$ and for which every standard lattice simplex has volume 1; see \cite[Chapter 5, Section 3D]{Gelfand:Kapranov:Zelevinsky}. The classical Bernstein Theorem states the following; see \cite{Bernstein} and \cite[Theorem 2.8., p. 206]{Gelfand:Kapranov:Zelevinsky}:

\begin{thm}[Bernstein's Theorem]
 Let $A_1,\ldots,A_n \subset \Z^n$ be finite sets such that their union generates $\Z^n$ as an affine lattice. Let $P_i \subseteq \R^n$ be the convex hull of $A_i$. 
Then there exists a dense Zariski open subset $U \subseteq \C^{A_1} \times \cdots \times \C^{A_n}$ such that for any $(f_1,\ldots,f_n) \in U$, the number of solutions of the system of equations $f_1(\mathbf{z}) = \cdots = f_n(\mathbf{z}) = 0$ in $(\C^*)^n$ equals the mixed volume $\MV(P_1,\ldots,P_n)$.
\end{thm}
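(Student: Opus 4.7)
The plan is to prove Bernstein's Theorem via the projective toric variety associated with the Minkowski sum of the Newton polytopes. Let $P := P_1 + \cdots + P_n$, and let $X_P$ denote the projective toric variety whose fan is the normal fan $\NF(P)$. Since $\bigcup_i A_i$ generates $\Z^n$ as an affine lattice, $P$ is full-dimensional, so the algebraic torus $T := (\C^*)^n$ sits as a dense open subvariety of $X_P$. Each Newton polytope $P_i$ determines a $T$-invariant nef Cartier divisor $D_i$ on $X_P$, and each Laurent polynomial $f_i \in \C^{A_i}$ lifts canonically to a global section $s_i \in H^0(X_P, \cO_{X_P}(D_i))$ whose restriction to $T$ equals $f_i$.

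The first step is to compute the intersection number $D_1 \cdots D_n$ on $X_P$. The standard toric intersection dictionary yields the polynomial identity
\begin{eqnarray*}
(\lam_1 D_1 + \cdots + \lam_n D_n)^n & = & \MV(\lam_1 P_1 + \cdots + \lam_n P_n),
\end{eqnarray*}
where $\MV$ is the normalized volume with $\MV(\Delta) = 1$ on the standard simplex. Extracting the coefficient of $\lam_1 \cdots \lam_n$ on both sides immediately gives $D_1 \cdots D_n = \MV(P_1, \ldots, P_n)$ by the very definition of the mixed volume recalled in the previous subsection.

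The second step is a Bertini-type argument: there is a dense Zariski open $U_0 \subseteq \prod_i \C^{A_i}$ for which the divisors $\{s_i = 0\}$ meet transversally throughout $X_P$. Transversality on $X_P$ follows from the classical Bertini theorem once one verifies that the linear systems $|\cO_{X_P}(D_i)|$ are base-point free on $X_P$, which is a consequence of convexity of $P_i$ in the support-function description. Once transversality holds, the scheme-theoretic intersection $\bigcap_i \{s_i = 0\}$ has length equal to $D_1 \cdots D_n = \MV(P_1, \ldots, P_n)$; if in addition this intersection is contained in $T$, we obtain exactly $\MV(P_1, \ldots, P_n)$ solutions of the system in $(\C^*)^n$.

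The main obstacle, and the place where the affine-spanning hypothesis on $\bigcup_i A_i$ is essential, is excluding common zeros on the toric boundary $X_P \setminus T$. I would argue this orbit by orbit: on each torus orbit $O_\sigma \subseteq X_P$ corresponding to a non-zero cone $\sigma \in \NF(P)$, the restriction of $s_i$ to $O_\sigma$ is a Laurent polynomial with support the face of $P_i$ dual to $\sigma$, projected to the quotient lattice modelling $O_\sigma$. An induction on dimension, together with the observation that the projected supports still affinely span the quotient lattice (this is exactly where the hypothesis enters), shows that for a generic choice of $(f_1, \ldots, f_n)$ the restricted system has no common zero on $O_\sigma$ whenever $\dim O_\sigma < n$. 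Intersecting the finitely many dense Zariski open conditions arising from the cones $\sigma \neq \{0\}$ with $U_0$ produces the desired $U$, completing the argument.
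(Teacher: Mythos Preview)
The paper does not prove this statement at all: Bernstein's Theorem appears in the preliminaries section as a classical result quoted from the literature, with references to \cite{Bernstein} and \cite[Theorem 2.8., p.~206]{Gelfand:Kapranov:Zelevinsky}, and no argument is given. So there is nothing to compare your proposal against.

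That said, your outline is one of the standard modern routes to Bernstein's Theorem (the toric-variety/intersection-theory proof, essentially the one in Fulton's \emph{Introduction to Toric Varieties} or in Gelfand--Kapranov--Zelevinsky), and the overall architecture is sound: compactify in $X_P$, identify $D_1\cdots D_n$ with the mixed volume via the volume polynomial, and then argue that generic sections meet transversally and miss the boundary. Two places deserve more care if you want this to stand as a proof rather than a sketch. First, in the Bertini step, base-point freeness of $|\cO_{X_P}(D_i)|$ is what you need, and this holds because $\NF(P)$ refines $\NF(P_i)$, making $D_i$ nef and globally generated; ``convexity of $P_i$ in the support-function description'' is gesturing at the right fact but is not quite the justification. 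Second, and more substantively, your boundary-avoidance paragraph overstates the role of the affine-spanning hypothesis. On an orbit $O_\sigma$ of dimension $n-\dim\sigma<n$ you have $n$ restricted sections on a variety of strictly smaller dimension; the point is that the restriction maps $H^0(X_P,\cO(D_i))\to H^0(V(\sigma),\cO(D_i|_{V(\sigma)}))$ are surjective in the toric setting (lattice points of $P_i$ surject onto lattice points of the relevant face), so generic $(f_1,\ldots,f_n)$ give generic restricted sections, and $n$ generic sections on a variety of dimension $<n$ have empty common zero locus by a straightforward dimension count. No inductive appeal to affine spanning of projected supports is needed here, and indeed that claim is not obviously true as stated. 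The affine-spanning hypothesis is really only used, as you say at the outset, to ensure $P$ is full-dimensional so that $T\hookrightarrow X_P$ is the open orbit.
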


Let $P_1,\ldots,P_n$ be $n$-dimensional lattice polytopes and let $P := P_1 + \cdots + P_n$. A sum $\struc{C} := F_1 + \cdots + F_n$, where  $F_i \subseteq P_i$ is a face ($1\leq i\leq n$), is called a \struc{\textit{cell}} of $P$. A \struc{\textit{subdivision}} of $P$ is a collection $\struc{\Gamma} := \{C_1,\ldots,C_m\}$ of cells such that each cell is of full dimension, the intersection of two cells is a face of both and the union of all cells covers $P$. A subdivision $\Gamma$ is called \struc{\textit{mixed}} if for each cell $C=F_1+\cdots+F_n\in \Gamma$ one has $n=\dim F_1+\cdots+\dim F_n$. 
A cell $C$ is called \struc{\textit{mixed}} if every $P_i$ contributes with a face of dimension at least $1$ (and hence, in our setting, of dimension exactly $1$) to $C$. For further information and the following statement see \cite{Steffens:Theobald:TropicalIntersection}; see also \cite{Huber:Sturmfels} and \cite[Section 4.6.]{Maclagan:Sturmfels}
\begin{lemma}
Let $P_1,\ldots,P_n$ be $n$-dimensional lattice polytopes and let $\Gamma$ be a mixed subdivision of $ P_1 + \cdots + P_n$. Then we have
\begin{eqnarray*}
 \MV(P_1,\ldots,P_n) & = & \sum_{C \text{ mixed cell in } \Gamma} \MV(C).
\end{eqnarray*}
\label{Lem:MixedCells}
\end{lemma}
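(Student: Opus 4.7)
The plan is to realize the normalized mixed volume $\MV(P_1,\ldots,P_n)$ as a coefficient of the polynomial $\Vol(\lam_1 P_1 + \cdots + \lam_n P_n)$ from the Definition above, and to compute this volume polynomial by decomposing along the mixed subdivision $\Gamma$. The key geometric input is that, for any positive $\lam_1,\ldots,\lam_n$, scaling each summand $F_i$ of every cell $C = F_1 + \cdots + F_n \in \Gamma$ by $\lam_i$ produces rescaled cells $C(\lam) := \lam_1 F_1 + \cdots + \lam_n F_n$ whose union covers $\lam_1 P_1 + \cdots + \lam_n P_n$ with pairwise measure-zero overlaps. Granting this, additivity of Lebesgue volume yields
\[
\Vol\bigl(\lam_1 P_1 + \cdots + \lam_n P_n\bigr) \ = \ \sum_{C \in \Gamma} \Vol\bigl(\lam_1 F_1 + \cdots + \lam_n F_n\bigr).
\]

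Next I would compute each summand. For a cell $C = F_1 + \cdots + F_n \in \Gamma$ with $d_i := \dim F_i$, the mixedness condition $d_1 + \cdots + d_n = n$ combined with $\dim C = n$ forces the direction subspaces of the $F_i$ to form a direct sum decomposition of $\R^n$. Consequently $C$ is, up to translation, a product of $d_i$-dimensional bodies living in complementary linear subspaces, and scaling $F_i$ by $\lam_i$ scales its contribution by $\lam_i^{d_i}$. Hence $\Vol(\lam_1 F_1 + \cdots + \lam_n F_n) = \lam_1^{d_1} \cdots \lam_n^{d_n}\, \Vol(C)$. Summing over $C \in \Gamma$ and reading off the coefficient of $\lam_1 \cdots \lam_n$, only cells with $d_i = 1$ for every $i$ survive, and these are precisely the mixed cells. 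This yields $\Vol(P_1,\ldots,P_n) = \sum_{C \text{ mixed}} \Vol(C)$ in the Lebesgue normalization; multiplying both sides by $n!$ to pass to the lattice-normalized volume $\MV$ then gives the claim.

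The principal obstacle is verifying the first step, namely that $\{C(\lam)\}_{C \in \Gamma}$ is a polyhedral subdivision of $\lam_1 P_1 + \cdots + \lam_n P_n$ for all positive $\lam_i$. This is not purely formal: it rests on the coherence/Cayley-trick description of mixed subdivisions, whereby $\Gamma$ arises as the projection of a regular polyhedral subdivision of the associated Cayley polytope of $P_1,\ldots,P_n$, and rescaling the $P_i$ corresponds to a controlled re-parametrization of the Cayley construction that preserves the combinatorial type of the subdivision. I would import this structural input from \cite{Huber:Sturmfels} or \cite{Steffens:Theobald:TropicalIntersection} rather than re-derive it; once it is available, the remaining steps are straightforward polynomial bookkeeping in the $\lam_i$.
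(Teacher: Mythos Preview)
The paper does not prove this lemma at all: it is quoted verbatim from the literature, with a pointer to \cite{Steffens:Theobald:TropicalIntersection}, \cite{Huber:Sturmfels}, and \cite[Section 4.6]{Maclagan:Sturmfels}. So there is nothing to compare against on the paper's side.

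Your argument is the standard one found in those references: expand $\Vol(\lam_1 P_1 + \cdots + \lam_n P_n)$ along the subdivision, observe that each cell contributes $\lam_1^{d_1}\cdots\lam_n^{d_n}\Vol(C)$, and read off the $\lam_1\cdots\lam_n$ coefficient. Your identification of the only non-formal step --- that the rescaled cells $\{\lam_1 F_1 + \cdots + \lam_n F_n\}$ still tile $\lam_1 P_1 + \cdots + \lam_n P_n$ --- is exactly right, and deferring it to \cite{Huber:Sturmfels} is what the paper effectively does as well. One small remark: this rescaling property holds for \emph{any} mixed subdivision, not only coherent ones, so invoking the Cayley trick and regularity is more machinery than you strictly need; a direct combinatorial argument suffices. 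But since you are importing the step anyway, this does not affect correctness.
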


An intersection of $k$ tropical hypersurfaces in $\R^n$ is called \struc{\textit{proper}} if it has codimension $k$. If $k$ tropical hypersurfaces $\mathcal{T}_1,\ldots,\mathcal{T}_k$ intersect non-properly, one can consider their \struc{\textit{stable}} intersection, which is defined as follows:  
 It is defined as the limit, for $\eps \to 0$, of an $\eps$-perturbation of the original intersection. The crucial point here is that every $\eps$-perturbation intersects transversely and also every stable intersection is transverse and in particular proper; see \cite[Section 3.1.]{Steffens:Theobald:TropicalIntersection} and \cite[Section 3.6.]{Maclagan:Sturmfels} for further details. We denote the stable intersection of $\mathcal{T}_1,\ldots,\mathcal{T}_k$ by $\mathcal{T}_1\cap_{\mathrm{st}}\cdots \cap_{\mathrm{st}}\mathcal{T}_k$. 

We can now state the tropical analogue of Bernstein's Theorem. We provide a short version here, which is sufficient for our needs. For more information about the tropical Bernstein theorem including the detailed version see \cite[Theorem 4.6.9., p. 196]{Maclagan:Sturmfels}.

\begin{thm}[Tropical Bernstein Theorem]
Let $\cT(h_1),\ldots,\cT(h_n) \in \R^n$ be generic tropical hypersurfaces which are dual to regular subdivisions of $\New(h_1),\ldots,\New(h_n) \subseteq \R^n$. The multiplicity of each point $\mathbf{w}$ in the stable intersection $\cT(h_1) \cap \cdots \cap \cT(h_n)$ equals the mixed volume $\MV(C)$, where $C$ is the mixed cell in the subdivision of $\New(h_1) + \cdots + \New(h_n)$ induced by $\cT(h_1) \cap \cdots \cap \cT(h_n)$ corresponding to $\mathbf{w}$. 
\label{Thm:TropicalBernsteinTheorem}
\end{thm}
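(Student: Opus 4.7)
My plan is to exploit the well-known combinatorial duality between an arrangement of tropical hypersurfaces and a mixed subdivision of the Minkowski sum of their Newton polytopes, identify the local multiplicity at each intersection point with the normalized volume of the corresponding mixed cell, and extend the transverse case to the general stable case by $\eps$-perturbation. Concretely, I would first recall that each tropical hypersurface $\cT(h_i)$ is dual to the regular subdivision $\Delta_i$ of $\New(h_i)$ whose lifting function sends $a \in \supp(h_i)$ to $-b_a$: a $k$-dimensional cell of $\cT(h_i)$ corresponds to an $(n-k)$-dimensional face of $\Delta_i$. By the Cayley/Huber--Sturmfels trick, the common refinement of the $\Delta_i$ translates into a mixed subdivision $\Gamma$ of $\New(h_1) + \cdots + \New(h_n)$, and cells of the arrangement $\cT(h_1) \cup \cdots \cup \cT(h_n)$ correspond bijectively to cells $F_1 + \cdots + F_n$ of $\Gamma$ with dimensions summing to $n$.

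Next I would treat the transverse case. If the $\cT(h_i)$ meet properly, then every $\mathbf{w} \in \cT(h_1) \cap \cdots \cap \cT(h_n)$ lies in the relative interior of a top-dimensional cell of each individual $\cT(h_i)$. The dual cell $C = F_1 + \cdots + F_n$ then satisfies $\dim F_i = 1$ for all $i$, so $C$ is a mixed cell in the sense of Section~\ref{Sec:Preliminaries}. The multiplicity of $\mathbf{w}$, as defined in \cite[Section~3.6.]{Maclagan:Sturmfels}, is the index of the sublattice of $\Z^n$ spanned by the primitive integer normals to the $\cT(h_i)$ at $\mathbf{w}$; these normals coincide with the primitive edge directions of $F_1,\ldots,F_n$, and this index is precisely $\MV(F_1,\ldots,F_n) = \MV(C)$, the normalized mixed volume of the edge vectors.

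Finally, I would extend to the stable intersection by perturbation. Given generic $\cT(h_1),\ldots,\cT(h_n)$ (in the sense that the mixed subdivision $\Gamma$ is already fixed) but a possibly non-transverse intersection, I would choose a small $\eps$-perturbation of the coefficients so that the resulting arrangement is transverse and its induced mixed subdivision $\Gamma_\eps$ refines $\Gamma$. Each transverse intersection point of the $\eps$-perturbation lies in a mixed cell of $\Gamma_\eps$ refining some mixed cell $C$ of $\Gamma$, and by the transverse case these points carry multiplicities summing to $\MV(C)$ via Lemma~\ref{Lem:MixedCells} applied to the refinement of $C$. Taking $\eps \to 0$ collapses these perturbed points onto the stable intersection point dual to $C$ and identifies its stable multiplicity with $\MV(C)$. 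The main obstacle I anticipate is verifying that the stable intersection is genuinely independent of the perturbation and that multiplicities vary continuously; this rests on the balancing condition for tropical hypersurfaces, which I would invoke rather than re-prove.
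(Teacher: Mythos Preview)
The paper does not prove this statement at all: Theorem~\ref{Thm:TropicalBernsteinTheorem} appears in the Preliminaries section as a known background result, quoted (in abbreviated form) from \cite[Theorem~4.6.9., p.~196]{Maclagan:Sturmfels} without any accompanying argument. So there is no ``paper's own proof'' to compare against; your proposal is a proof sketch for a theorem the authors simply import.

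That said, your outline is the standard one and is essentially the argument given in Maclagan--Sturmfels and in \cite{Huber:Sturmfels,Steffens:Theobald:TropicalIntersection}: duality between the arrangement and a mixed subdivision via the Cayley trick, identification of the multiplicity at a transverse point with the lattice index of the primitive edge directions (which equals the normalized mixed volume of the corresponding parallelotope), and passage to the stable intersection by an $\eps$-perturbation whose well-definedness rests on the balancing condition. The one place where your sketch is slightly loose is the claim that $\MV(F_1,\ldots,F_n)=\MV(C)$ for edges $F_i$: strictly speaking $\MV(C)$ here denotes the normalized Euclidean volume of the zonotope $C=F_1+\cdots+F_n$, and the equality with the lattice index is a short determinant computation rather than a tautology. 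But this is a routine check, not a gap.
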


\section{Combinatorics of Intersections of Amoebas}
\label{Sec:Combinatorics}
The aim of this section is to study basic combinatorial properties of intersections of amoebas of hypersurfaces. 

We start by fixing some notation that we use during this and the next section. In what follows, we always assume that $n\geq 2$. We fix support sets $A_1,\ldots,A_n\subseteq \Z^n$ and let $\cF:=\{f_1,\ldots,f_n\}\subseteq\C[\mathbf{z}^{\pm 1}]$ be a collection of Laurent polynomials such that $f_i\in(\C^*)^{A_i}$ for all $1\leq i\leq n$. Throughout this article, we call a collection $\cF:=\{f_1,\ldots,f_n\}\subseteq\C[\mathbf{z}^{\pm 1}]$ of Laurent polynomials \struc{\textit{generic}} in the space $(\C^*)^A := (\C^*)^{A_1} \times \cdots \times (\C^*)^{A_n}$ if the following conditions hold:
\begin{itemize}
\item[(1)] every $f_j \in \cF$ is irreducible,
\item[(2)] for every $1\leq k\leq n$ and every $k$-element subset $\{f_{i_1},\ldots, f_{i_k}\}\subseteq \cF$ the intersection $\bigcap_{\ell=1}^k \partial(\cA(f_{i_\ell}))$ has codimension $k$,
\item[(3)] for every $p \in \bigcap_{j = 1}^n \cA(f_j)$ and every $1\leq \ell \leq n$ such that $p\in \partial(\cA(f_\ell))$ there exists a unique component of the complement of $\cA(f_\ell)$ containing $p$ in its closure. 
\end{itemize}
Moreover, a single Laurent polynomial $f$ is \struc{\emph{generic}}, if it satisfies condition (1). Note that the term ``generic'' is justified as follows: (1) is obviously a Zariski open condition.  
 For conditions (2) and (3) it is less obvious why it corresponds to an open set; we give a more detailed explanation in Appendix \ref{Sec:AppendixGenericity}. In Appendix \ref{Sec:AppendixDimension} we give a short description how the term ``dimension'' is defined here.

We remark that this kind of genericity implies several consequences, which are crucial in some of the later proofs. In particular, the boundaries of all amoebas in $\cF$ have a non-trivial intersection. Thus, the boundaries of any two amoebas intersect in codimension $2$.

In what follows, we consider a generic collection  $\cF := \{ f_1,\ldots,f_n \}\subseteq \C[{\mathbf z}^{\pm 1}]$ of Laurent polynomials 
 and we are interested in the intersection $\cI(\cF):=\bigcap_{j = 1}^n \cA(f_j)$. An initial example shows that $\cI(\cF)$ is disconnected in general.

\begin{example}
Let $\cF:=\{f_1,f_2\}\subseteq \C[z_1^{\pm 1},z_2^{\pm 1}]$, where $f_1(z_1,z_2) := 2z_1 + z_2 + 1$ and $f_2(z_1,z_2) := z_1^2z_2 + z_1z_2^2 + 5 z_1z_2 + 1$. One can see from Figure \ref{Fig:AmoebaIntersection1} that $\cA(f_1) \cap \cA(f_2)$ consists of two disjoint connected components.
\end{example}

We investigate basic properties of the connected components of $\cI(\cF)$. In order to do so, we need the following lemma concerning strictly convex sets.

\begin{lemma}\label{lem:convex}
Let $n\geq 2$ and let $B\subseteq \R^n$ be an $n$-dimensional closed ball. Let $s\geq 2$ and let $L_1,\ldots,L_s\subseteq \R^n$ be $n$-dimensional strictly convex sets satisfying the following conditions:
\begin{itemize}
\item[(a)] $\bigcap_{i=1}^sL_i\cap B^{\circ}$ is non-empty of dimension smaller than or equal to $n-1$, where $B^{\circ}$ denotes the interior of $B$. 
\item[(b)] $B\subseteq \bigcup_{i=1}^s L_i$.
\end{itemize}
Then $s\geq n+1$.
\end{lemma}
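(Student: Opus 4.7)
I would argue by contradiction, assuming $s \leq n$. The plan has three stages: pin the intersection down to a single point, pass to a small sphere around it, and invoke the Lusternik--Schnirelmann--Borsuk theorem.

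First I would use condition (a) together with the fact (recalled in the paper) that the intersection of strictly convex sets is strictly convex to conclude that $C := \bigcap_{i=1}^{s} L_{i} \cap B^{\circ}$ is a single point $\{p\}$. Indeed, $C$ is strictly convex (note that the open ball $B^{\circ}$ is itself strictly convex), and any strictly convex set of dimension $\leq n-1$ in $\R^{n}$ has empty interior in $\R^{n}$; so if it contained two distinct points $p \neq q$ the segment $[p,q]$ would lie in $C$ and, for lack of interior, on $\partial C$, violating strict convexity. Hence $C = \{p\}$ for some $p \in B^{\circ}$.

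Next I would localize at $p$. Choose $\delta > 0$ so small that $\overline{B_{\delta}(p)} \subseteq B^{\circ}$, set $S := \partial B_{\delta}(p) \cong S^{n-1}$, and put $V_{i} := L_{i} \cap S$. Condition (b) gives $S \subseteq B \subseteq \bigcup_{i} L_{i}$ and therefore $\bigcup_{i} V_{i} = S$, while $C = \{p\}$ together with $p \notin S$ forces $\bigcap_{i} V_{i} = \emptyset$. The crucial structural input is that each $V_{i}$ lies in an \emph{open} hemisphere of $S$: picking a supporting hyperplane $H_{i}$ of $L_{i}$ at $p$ with $L_{i} \subseteq H_{i}^{+}$, strict convexity of $L_{i}$ forces $L_{i} \cap H_{i} = \{p\}$ (otherwise a non-trivial segment of $H_{i}$ would lie on $\partial L_{i}$), so $V_{i} \subseteq (H_{i}^{+} \setminus H_{i}) \cap S$, which is an open hemisphere. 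In particular, no $V_{i}$ contains a pair of antipodal points of $S$.

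Finally I would invoke the Lusternik--Schnirelmann--Borsuk theorem, which says that $S^{n-1}$ cannot be covered by $n$ closed sets none of which contains a pair of antipodal points. Applied to the cover $\{V_{1},\ldots,V_{s}\}$ of $S$ this gives $s \geq n+1$, contradicting $s \leq n$ and proving the lemma.

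The main obstacle I expect is the supporting-hyperplane step in the third paragraph: it requires $p \in \partial L_{i}$ for \emph{every} $i$. If some $L_{i_{0}}$ contains $p$ in its interior, then $V_{i_{0}} = S$ contains antipodal pairs and LSB does not apply to the full collection. I would deal with this by first observing that, near $p$, any such $L_{i_{0}}$ contributes trivially to the intersection, so the strictly convex set $\bigcap_{i \neq i_{0}} L_{i} \cap B^{\circ}$ still has $p$ as an isolated (hence unique) point by the argument of the first paragraph. One then passes to a minimal sub-collection in which every remaining $L_{i}$ has $p$ on its boundary and which still covers a neighborhood of $p$ in $B$, and applies LSB to that sub-cover.
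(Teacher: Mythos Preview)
Your approach via Lusternik--Schnirelmann--Borsuk is genuinely different from the paper's induction on $n$ and more direct: both first reduce $C$ to a single point $p$, but then the paper slices by hyperplanes through $p$ to drop the dimension, whereas you pass to a small sphere about $p$ and invoke LSB in one stroke. Your argument for the single-point step is also tighter than the paper's.

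The obstacle you flag is real, and your proposed fix does not work --- in fact it cannot, because the lemma as stated is false. For $n=3$ take $B=\overline{B_1(0)}$, $L_1=\overline{B_{10}(0)}$, and $L_2,L_3$ the closed unit balls centered at $(\pm 1,0,0)$; then $\bigcap_i L_i\cap B^{\circ}=\{0\}$ and $B\subseteq L_1$, so (a) and (b) hold with $s=3=n$. Here $0$ lies in the interior of $L_1$, and the sub-collection $\{L_2,L_3\}$ (the only sets with $0$ on their boundary) does not cover any neighborhood of $0$, so no sub-cover of the kind you describe exists. The paper's own proof has the same gap: its preliminary claim that $D\subseteq\partial L_i$ for every $i$ already fails on this example. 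In the paper's sole application of the lemma (the proof of Theorem~\ref{Thm:Compact}) the distinguished point lies on $\partial L_i$ for all $i$ by construction, and in that regime your LSB argument goes through cleanly and is complete.
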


\begin{proof}
We first show that condition (a) implies that $D:=\bigcap_{i=1}^sL_i\cap B^{\circ}$ equals a single point. To see this, one first observes that $D$ has to lie in the boundary of each $L_i$. Otherwise, it is easy to see that, since $B$ and $L_i$ are strictly convex, $D$ has to be $n$-dimensional (note that this statement is not necessarily true if the sets are just convex). If $p,q\in D$, then, by convexity of $D$, we also know that the line segment $[p,q]$ between $p$ and $q$ has to lie in $D$. But as $B$ and $L_i$ are strictly convex, it follows that $[p,q]$ belongs to the interior of $L_i$, which implies that $\dim D=n$, a contradiction.

We now prove the statement by induction on $n$. For $n=2$, assume, by contradiction, that there exist two strictly convex sets $L_1$, $L_2$ satisfying conditions $(a)$ and $(b)$. Then, by the above statement, it follows that $L_1\cap L_2\cap B$ equals a single point $v$. Since $L_1$ and $L_2$ are both strictly convex, there exists a line $L$ passing through $v$ such that $L_i\cap L=\{p\}$ for $1\leq i\leq 2$ and $L_1$ and $L_2$ lie on different sides of $L$. But then $\emptyset\neq (L\setminus\{p\})\cap B\not\subseteq L_1\cup L_2 $ which contradicts $(b)$. 

Now assume, $n\geq 3$. Let $s\geq 2$ and  $L_1,\ldots,L_s$ strictly convex such that $(a)$ and $(b)$ hold. As for $n=2$, strict convexity implies that $\bigcap_{i=1}^sL_i\cap B^{\circ}$ consists of a single point $v$. Let $H$ be a hyperplane in $\R^n$ containing $v$. Then $H\cap L_1,\ldots,H\cap L_s$ yields a covering of the $(n-1)$-dimensional ball $B\cap H$ with strictly convex sets such that $(a)$ and $(b)$ hold. By induction we get $s\geq n$.  (b) implies that there exists an $i$ such that $H\cap L_i$ is of dimension $n-1$. Moreover, as $L_i$ is strictly convex, there exists a hyperplane $G\subseteq \R^n$ such that $L_i$ is contained in one halfspace of $G$ and intersects $G$ only in $v$. Applying induction to the $(n-1)$-dimensional ball $B\cap G$, we know that there are at least $n$ strictly convex sets necessary to cover $B\cap G$. Since $L_i$ does intersect $B\cap G$ only in $v$, it does not contribute to the cover of $B\cap G$ and $L_i\cap G$ could hence be removed.  We conclude, that there exist at least $n$ strictly convex sets in the collection $L_1,\ldots,L_s$ different from $L_i$. Therefore, we have $s\geq n+1$.
\end{proof}

The next theorem states some basic properties of the connected components of  $\cI(\cF)$.

\begin{thm}
Let $n\geq 2$ and let $\cF:=\{f_1,\ldots, f_n \}\subseteq  \C[\mathbf{z}^{\pm 1}]$ be generic. Then every connected component of  $\cI(\cF)$ is a closed, $n$-dimensional set. If the spines $\cS(f_1),\ldots,\cS(f_n)$ intersect properly, then every connected component is compact. 
\label{Thm:Compact}
\end{thm}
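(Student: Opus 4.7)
My plan addresses the three assertions in turn. Closedness is immediate: each amoeba $\cA(f_j)$ is closed in $\R^n$ (continuity of the $\Log|\cdot|$-map together with closedness of $\cV(f_j)$ in $(\C^*)^n$), so $\cI(\cF) = \bigcap_j \cA(f_j)$ is closed in $\R^n$, and hence every connected component of $\cI(\cF)$ is closed in $\R^n$.

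For the $n$-dimensionality assertion I would argue by contradiction, applying Lemma \ref{lem:convex} at a point of $K$. Suppose some connected component $K\subseteq\cI(\cF)$ satisfies $\dim K \leq n-1$. Fix $p\in K$ and choose a closed ball $B$ around $p$ small enough that $B\cap\cI(\cF)\subseteq K$, and such that $B\subseteq \cA(f_\ell)^{\circ}$ for every $\ell$ with $p\notin\partial\cA(f_\ell)$. Let $T:=\{\ell\,:\,p\in\partial\cA(f_\ell)\}$ and set $s:=|T|\leq n$. By genericity condition (3), for each $\ell\in T$ there is a unique complement component $E_{\alpha_\ell}(f_\ell)$ of $\cA(f_\ell)$ with $p$ in its closure; set $L_\ell:=\overline{E_{\alpha_\ell}(f_\ell)}$. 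By condition (1) each $L_\ell$ is $n$-dimensional, closed, and strictly convex, and locally on $B$ one has $\cI(\cF)\cap B = B\setminus\bigcup_{\ell\in T}E_{\alpha_\ell}(f_\ell)$. Since $\dim K<n$ forces $K\cap B$ to have empty interior in $B$, the set $B\setminus\cI(\cF) = B\cap\bigcup_{\ell\in T}E_{\alpha_\ell}(f_\ell)$ is dense in $B$, and taking closures gives the covering $B\subseteq \bigcup_{\ell\in T} L_\ell$, i.e.\ condition (b) of Lemma \ref{lem:convex}. For condition (a), $p$ lies in $\bigcap_{\ell\in T} L_\ell\cap B^{\circ}$, so the intersection is non-empty, and genericity condition (2) ensures that the boundaries $\partial L_\ell\subseteq \partial\cA(f_\ell)$ meet transversely at $p$; this transversality, together with the covering, implies (via a tangent-cone analysis at $p$) that $\bigcap_{\ell\in T} L_\ell$ has empty interior near $p$, hence dimension at most $n-1$. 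Lemma \ref{lem:convex} then yields $s\geq n+1$, contradicting $s\leq n$.

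For compactness, assume the spines $\cS(f_1),\ldots,\cS(f_n)$ intersect properly in $\R^n$, so $\cS(f_1)\cap\cdots\cap\cS(f_n)$ is a finite set of points. Each amoeba $\cA(f_j)$ deformation retracts to $\cS(f_j)$ and shares its logarithmic limit set with $\cS(f_j)$. The properness of the spine intersection forces the recession fans of the spines (equivalently, the logarithmic limit sets of the amoebas) to have no common unbounded direction: any such direction would extend $\cS(f_1)\cap\cdots\cap\cS(f_n)$ to an unbounded set, contradicting properness. Hence $\cI(\cF)$ contains no sequence escaping to infinity in a common direction, so every connected component is bounded; combined with the closedness above, each component is compact.

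The main obstacle I anticipate is the verification of condition (a) of Lemma \ref{lem:convex}: one must carefully exploit the transversality from genericity condition (2) to rule out the possibility that the strictly convex sets $L_\ell$ simultaneously cover $B$ and intersect in an $n$-dimensional set near $p$. This is a local convex-analytic argument on the tangent cones of the $L_\ell$ at $p$, and is the delicate technical step of the proof.
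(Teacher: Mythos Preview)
Your arguments for closedness and for compactness are essentially the paper's: closedness is immediate from amoebas being closed, and compactness is the logarithmic-limit-set argument (the paper phrases it via the scaled amoebas $\cA_r(f_j)$ and $\cA_\infty(f_j)$, but the content is the same).

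For the $n$-dimensionality, you and the paper both aim to invoke Lemma~\ref{lem:convex}, but the paper inserts an additional step that you skip: an induction on the number $m=\#J$ of amoebas in the intersection. The inductive hypothesis (every component of $\cI(\cF_{[m-1]})$ is $n$-dimensional) is used to show that a hypothetically low-dimensional component $K$ of $\cI(\cF_{[m]})$ must lie in $\bigcap_{j=1}^m \partial\cA(f_j)$; only then is Lemma~\ref{lem:convex} applied at a point of $K$ with $s=m$. This induction is doing real work: it guarantees $s\geq 2$ automatically and puts you at a point lying on \emph{all} boundaries, which is the clean situation for the lemma.

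Your direct local approach at an arbitrary $p\in K$ does not handle $s=|T|\in\{0,1\}$ (where Lemma~\ref{lem:convex} does not apply), and, more seriously, leaves condition~(a) of the lemma unproved. You yourself flag this as the main obstacle, and it is a genuine gap as written: ``transversality together with the covering implies, via a tangent-cone analysis, that $\bigcap L_\ell$ has empty interior'' is not an argument but a promissory note. In fact, if the supporting hyperplanes of the $L_\ell$ at $p$ were in general position then a whole open cone near $p$ would lie outside every $L_\ell$, so the covering (b) itself would fail---so what you would really need to extract from genericity (2) is a contradiction between (b) and transversality, not a verification of (a). Either way, this step is not carried out. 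The paper's induction is precisely the structural device that substitutes for this missing local analysis.
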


\begin{proof}
Consider a connected component $K$ of  $\cI(\cF)$. First, we show that $K$ is bounded if the spines $\cS(f_1),\ldots,\cS(f_n)$ intersect properly.
Recall the definition of $\cA_r(f)$ from \eqref{Equ:ScaledAmoeba}. Assume by contradiction that $K$ is unbounded. It follows that there exists $s\gg 0$ such that for all $r \in \R_{\geq s}$ we have $\bigcap_{j = 1}^n \cA_r(f_j) \neq \emptyset$. Thus, $\lim_{r \to \infty} \bigcap_{j = 1}^n \cA_r(f_j) \neq \emptyset$ and hence $\bigcap_{j = 1}^{n} \cA_{\infty}(f_j) \neq \emptyset$. This means that the intersection of the spines $\cS(f_1),\ldots ,\cS(f_n)$ has codimension smaller than $n$. This contradicts the assumption that the spines $\cS(f_1),\ldots,\cS(f_n)$ intersect properly.

Second, we show that $K$ is closed. But this is immediate since $\bigcap_{j = 1}^n \cA(f_j)$ is the intersection of closed sets. 

We now show that $K$ is $n$-dimensional. 
Given a subset $J\subseteq [n]:=\{1,\ldots,n\}$, we set $\cF_J:=\{f_j~:~j \in J\}$ and $\cI(\cF_J):=\bigcap_{j\in J}\cA(f_j)$. 
We show by induction on $\#J$ that for all $J\subseteq [n]$ any connected component of $\cI(\cF_J)$ has dimension $n$.\\
Since $n \geq 2$, the statement is clear for $\#J=1$. \\
Let $\#J=m\geq 2$. Without loss of generality let $J=[m]$. By contradiction, assume that $K$ is a connected component of $\cI(\cF_{[m]})$ such that $\dim K<n$. 
First, we show that $K$ has to be contained in the boundary of $\cA(f_j)$ for $1\leq j \leq m$, in this case. Suppose without loss of generality that $K\subseteq \cA(f_m)\setminus \partial (\cA(f_m))$. Consider a point  $p$ in the relative interior of $K$. Since $p$ lies in the interior of $\cA(f_m)$, there exists a small $n$-dimensional neighborhood $U$ of $p$, such that $K\cap U= \cI(\cF_{[m-1]})\cap U$. By induction, it follows that $\cI(\cF_{[m-1]})\cap U$ is $n$-dimensional, hence a contradiction to the assumption that $\dim(K) < n$.\\
Being $K$ contained in $\partial(\cA(f_j))$ for all $1\leq j\leq m$, means that $K$ lies in the intersection of the boundaries of specific components $E_{\alpha(j)}(f_j)$ of the complement of $\cA(f_j)$ for $1\leq j\leq m$. 

Hence, if $q$ is a point in the relative interior of $K$, then there exists a small $n$-dimensional neighborhood $V$ of $q$ such that $\left(\bigcup_{j=1}^m \overline{E_{\alpha(j)}(f_j)}\right)\cap V=V$. 
As $2\leq m\leq n<n+1$, this yields a contradiction to Lemma \ref{lem:convex}.
\end{proof}

Note that in case that the spines $\cS(f_1),\ldots,\cS(f_n)$ do not intersect properly, one can easily compactify the set  $\cI(\cF)$ by considering the \struc{\textit{compactified amoebas}} instead of the usual amoebas $\cA(f_1),\ldots,\cA(f_n)$. Compactified amoebas are obtained via the toric moment map and are well-known objects in amoeba theory. They were already considered by Gelfand, Kapranov, and Zelevinsky; see \cite[p. 198 et seq.]{Gelfand:Kapranov:Zelevinsky} and \cite{Mikhalkin:Survey}. For convenience of the reader we omit this additional technicality and, in what follows, we assume compactness of $\cI(\cF)$, where it is needed.

In order to obtain information about the connected components of the intersection $\cI(\cF)$ of $\cA(f_1),\ldots,\cA(f_n)$, it is important to understand the boundaries of those components. Clearly, they can be described by means of intersections of boundaries of specific components of the complements of subcollections of $\cA(f_1),\ldots,\cA(f_n)$.  Condition (2) of the definition of genericity implies that for a generic collection $\cF := \{f_1,\ldots,f_n\} \subset \C[\mathbf{z}^{\pm 1}]$ of $n$ Laurent polynomials the intersection $\bigcap_{j = 1}^k \partial (\cA(f_j) )$ is zero dimensional and of finite cardinality. 
This motivates the following definition. 

\begin{definition}
 Let $\cF:=\{f_1,\ldots f_n\}\subseteq \C[\mathbf{z}^{\pm 1}]$ be a generic collection of Laurent polynomials.
 \begin{itemize}
  \item[(a)] $\bigcap_{j = 1}^n \partial(\cA(f_j))$ is called the set of \struc{\emph{vertices}} of $\cI(\cF)$, denoted by \struc{$V(\cF)$}.
  \item[(b)] For a connected component $K$ of $\cI(\cF)$ we call $\struc{V(K)}:= K \cap V(\cF)$ the set of \struc{\emph{vertices}} of $K$.
 \end{itemize}
 \label{Def:VerticesOfComponents}
\end{definition}

By Definition \ref{Def:VerticesOfComponents} vertices of a connected component $K$ of $\cI(\cF)$ lie on the boundary of $K$. More generally, we can decompose the boundary of $K$ as the union of disjoint pieces, each of which is contained in the intersection of finitely many $\partial \ovl{E_{\alpha(j)}(f_j)}$. This motivates the following definition of $k$-\textit{faces} of $K$. 

\begin{definition}\label{def:Face}
  Let $\cF:=\{f_1,\ldots f_n\}\subseteq \C[\mathbf{z}^{\pm 1}]$ be a generic collection of Laurent polynomials. Let $K$ be a connected component of  $\cI(\cF)$. Let $0\leq k\leq n-1$.
  \begin{itemize}
   \item[(a)] A non-empty and connected subset $\struc{F}\subsetneq K$ is called a \struc{$k$-\emph{face}} of $K$, if there exist unique  $E_{\alpha(1)}(f_{j_1}),\ldots,E_{\alpha(n-k)}(f_{j_{n-k}})$ such that  $\mathbf{x} \in \bigcap_{s = 1}^{n-k} \partial \ovl{E_{\alpha(s)}(f_{j_s})}$ for all $\mathbf{x} \in F$.
   \item[(b)] An $(n-1)$-dimensional face of $K$ is called a \struc{\emph{facet}} of $K$.
  \end{itemize}
\end{definition}
We remark that Definition \ref{Def:VerticesOfComponents} combined with our definition of genericity implies the existence of a \textit{face lattice} for every connected component $K$ of $\cI(\cF)$, where, as usual, faces are ordered by inclusion.

Note that a priori the definition of a face does not exclude that a single amoeba contributes with multiple components of its complement to an intersection that describes a specific face. The following lemma, however, shows that this case can never occur.

\begin{lemma}
 Let $\cF:=\{f_1,\ldots f_n\}\subseteq \C[\mathbf{z}^{\pm 1}]$ be generic. Let $K$ be a connected component of $\cI(\cF)$. Let $F$ be a $k$-face of $K$ that is  given by $E_{\alpha(1)}(f_{j_1}),\ldots,E_{\alpha(n-k)}(f_{j_{n-k}})$. Then all the $f_{j_s}$ are distinct.
\label{Lem:FaceConstruction}
\end{lemma}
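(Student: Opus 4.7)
The plan is to argue by contradiction, invoking genericity condition (3) directly. The key observation is that if a single polynomial $f_{j_s} = f_{j_t}$ contributes twice, then every point of $F$ sits in the closures of two different components of the complement of $\cA(f_{j_s})$, which is exactly what condition (3) forbids at points of $\cI(\cF)$ that lie on $\partial(\cA(f_{j_s}))$.

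First I would suppose, for contradiction, that $f_{j_s} = f_{j_t} =: g$ for some $s \ne t$. The uniqueness clause in Definition~\ref{def:Face} forces the listed complement components $E_{\alpha(1)}(f_{j_1}),\ldots,E_{\alpha(n-k)}(f_{j_{n-k}})$ to be pairwise distinct (otherwise one could drop repeats and the description would not be ``unique''), so $\alpha(s) \ne \alpha(t)$ and $E_{\alpha(s)}(g), E_{\alpha(t)}(g)$ are two different connected components of $\R^n \setminus \cA(g)$.

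Next I would pick any point $p \in F$. By Definition~\ref{def:Face}, $p \in \partial\overline{E_{\alpha(s)}(g)} \cap \partial\overline{E_{\alpha(t)}(g)}$. Since each $E_{\alpha}(g)$ is open in the open set $\R^n\setminus \cA(g)$, a standard topological argument (any boundary point that lay in the interior of $\cA(g)$ would have a whole neighborhood disjoint from $E_\alpha(g)$) gives $\partial\overline{E_{\alpha}(g)} \subseteq \partial \cA(g)$; in particular $p \in \partial \cA(g)$. Combined with $F \subseteq K \subseteq \cI(\cF)$, the point $p$ satisfies the hypotheses of genericity condition~(3) with $\ell = j_s$, so there is a unique component of $\R^n\setminus\cA(g)$ containing $p$ in its closure. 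But both $E_{\alpha(s)}(g)$ and $E_{\alpha(t)}(g)$ contain $p$ in their closures, since the topological boundary of a set is contained in its closure. This is the desired contradiction, completing the proof.

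The only mild technical obstacle is checking that $\partial\overline{E_{\alpha}(g)} \subseteq \partial\cA(g)$ (so that condition~(3) is applicable) and justifying that the ``uniqueness'' in Definition~\ref{def:Face} really rules out repeated components. Both are essentially formal once the open-set structure of amoeba complements and the conventions of Definition~\ref{def:Face} are spelled out; no further geometric input is needed.
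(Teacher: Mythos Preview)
Your argument is correct and follows essentially the same route as the paper: both reduce immediately to genericity condition~(3), observing that a point of $F\subseteq\cI(\cF)$ lying on $\partial\cA(g)$ cannot be in the closures of two distinct components $E_{\alpha(s)}(g)$ and $E_{\alpha(t)}(g)$. The paper's proof is a one-line invocation of condition~(3) (phrased as $\ovl{E_{\alpha}(f_i)}\cap\ovl{E_{\beta}(f_i)}=\emptyset$), while you spell out the auxiliary checks (distinctness of the listed components via the uniqueness clause, and $\partial\ovl{E_{\alpha}(g)}\subseteq\partial\cA(g)$) that make the invocation legitimate.
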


\begin{proof}
It suffices to observe that condition (3) in the definition of genericity implies that for every $f_i$ and different non-empty components of the complement $E_\alp(f_i)$ and $E_{\beta}(f_i)$ of $\cA(f_i)$, one has  $\ovl{E_{\alp}(f_i)} \cap \ovl{E_{\beta}(f_i)} = \emptyset$.
\end{proof}
Note that property (3) of the definition of genericity implies that every vertex lies in a unique component of the complement of every single amoeba.  Hence, as a consequence of the previous lemma, it follows that $0$-faces as defined in Definition \ref{def:Face} coincide with vertices as defined in Definition \ref{Def:VerticesOfComponents}. 

\
Given a connected component $K$ of $\cI(\cF)$ with set of vertices $V(K)$, we define the polytope \struc{$P_K$} as the convex hull of $V(K)$. We call all polytopes arising in this way \struc{\textit{intersection polytopes}} of $\cF$. We remark that though the vertex set of an intersection polytope $P_K$ is clearly contained in $V(K)$, it does not have to coincide with $V(K)$; see Figure \ref{Fig:VerticesComponentsVersusConvexHulls}. In the following, we use \struc{$V(P_K)$} to denote the vertex set of $P_K$.

\begin{figure}
 \ifpictures
\includegraphics[width=0.4\linewidth]{./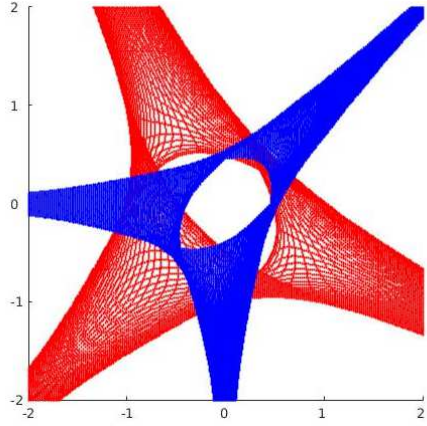}
\fi
\caption{The intersection of $f_1(z_1,z_2) := z_1^2z_2^2 + z_1z_2 + z_1 + z_2$ and $f_2(z_1,z_2) := z_1^3 + z_2^3 + 2z_1z_2 + 1$. None of the vertices of $\cI(\{f_1,f_2\})$ given by the intersection of the boundaries of the two bounded components of the complement of $\cA(f_1)$ and $\cA(f_2)$ are vertices of the corresponding intersection polytopes.}
\label{Fig:VerticesComponentsVersusConvexHulls}
\end{figure}

The following theorem shows that the vertex set $V(P_K)$ of $P_K$ coincides with the set of extreme points of $\conv(K)$.

\begin{thm}
Let $\cF:=\{f_1,\ldots,f_n\}\subseteq \C[\mathbf{z}^{\pm 1}]$ be generic and let $K$ be a connected component of $\cI(\cF)$. Then the vertex set $V(P_K)$ of $P_K$ is given by the set of extreme points of $\conv(K)$. In particular, we have $\conv(K) = P_K$.
\label{Thm:ExtremePoints}
\end{thm}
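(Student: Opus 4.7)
The plan is to establish $\conv(K) = P_K$ by verifying that every extreme point of the compact convex set $\conv(K)$ lies in $V(K)$; the identification $V(P_K) = \text{ext}(\conv(K))$ is then automatic, since the vertices of a polytope coincide with its extreme points. Since $K$ is compact (we may assume this via compactification, as in the remark after Theorem~\ref{Thm:Compact}), $\conv(K)$ is compact and convex, so by Krein--Milman it equals the convex hull of its extreme points, each of which lies in $K$. Hence everything reduces to showing $\text{ext}(\conv(K)) \subseteq V(K)$, from which $\conv(K) \subseteq \conv(V(K)) = P_K \subseteq \conv(K)$ follows immediately.

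My strategy is to first handle the (better behaved) exposed points of $\conv(K)$ and then extend to all extreme points by density. Let $p$ be an exposed point, i.e., there is a supporting hyperplane $H$ of $\conv(K)$ at $p$ with $H \cap \conv(K) = \{p\}$, and suppose for contradiction that $p \notin V(K)$. Write $S := \{i \in [n] : p \in \partial(\cA(f_i))\}$, so that $|S| \le n-1$. Choose a small open ball $B$ around $p$ contained in $\Int(\cA(f_j))$ for each $j \notin S$; by genericity condition~(3), for every $i \in S$ there is a unique complement component $E_i$ of $\cA(f_i)$ with $p \in \ovl{E_i}$, and shrinking $B$ we may assume $K \cap B = B \setminus \bigcup_{i\in S} E_i$. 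Since $H \cap K = \{p\}$, the $(n-1)$-dimensional disk $H \cap \ovl B$ is covered by the $|S|$ convex sets $\ovl{E_i} \cap H \cap \ovl B$, whose common intersection is $\{p\}$ by genericity condition~(2). Applying Lemma~\ref{lem:convex} inside the hyperplane $H$, that is in ambient dimension $n-1$, would then force $|S| \ge n$, contradicting $|S| \le n-1$. Hence every exposed point of $\conv(K)$ lies in $V(K)$.

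To pass from exposed points to all extreme points I would invoke Straszewicz's theorem, which ensures that the exposed points of a compact convex set in $\R^n$ are dense in its extreme points. Since $V(K) \subseteq \bigcap_{j=1}^n \partial(\cA(f_j))$ is finite by genericity condition~(2), and hence closed, density implies $\text{ext}(\conv(K)) \subseteq V(K)$. Combined with Krein--Milman this yields $\conv(K) = P_K$, and then $V(P_K)$ is by definition the vertex set of this polytope, which coincides with $\text{ext}(\conv(K))$ as claimed.

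The main obstacle I foresee is the strict-convexity hypothesis of Lemma~\ref{lem:convex}: closures of amoeba complement components are convex but not necessarily strictly convex, so the lemma does not apply verbatim. I would handle this exactly as in the proof of Theorem~\ref{Thm:Compact}, where the same issue arose: any flat boundary piece of $\ovl{E_i}$ passing through $p$ would be a positive-dimensional subset of $\partial(\cA(f_i))$ sitting inside an affine hyperplane, and demanding such flat pieces simultaneously for several indices $i \in S$ would produce a positive-dimensional subset of $\bigcap_{i\in S}\partial(\cA(f_i))$ lying in an affine subspace, violating the codimension statement of genericity condition~(2). One may therefore locally replace each $\ovl{E_i}$ by a strictly convex supporting body tangent to $\partial(\cA(f_i))$ at $p$ without losing the covering property, so Lemma~\ref{lem:convex} applies and the contradiction goes through.
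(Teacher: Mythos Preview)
Your overall plan (exposed points via Lemma~\ref{lem:convex}, then Straszewicz plus finiteness of $V(K)$) is more circuitous than needed, and the key application of Lemma~\ref{lem:convex} has two genuine gaps. The first is condition~(a): you claim $\bigcap_{i\in S}(\ovl{E_i}\cap H)=\{p\}$ by genericity~(2), but that condition bounds $\bigcap_{i\in S}\partial(\cA(f_i))$, not $\bigcap_{i\in S}\ovl{E_i}$. Complement components of \emph{distinct} amoebas routinely overlap in full-dimensional regions (any point lying outside all the $\cA(f_i)$ belongs to some $E_{\alpha(i)}(f_i)$ for every $i$), so the common intersection in $H$ may well be $(n-1)$-dimensional near $p$, and the lemma does not apply. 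The second gap is the strict-convexity workaround: flat boundary pieces of the various $\ovl{E_i}$ sit in \emph{different} hyperplanes, and their intersection generically has exactly the codimension that genericity~(2) prescribes, so nothing is violated; and replacing each $\ovl{E_i}$ by a strictly convex tangent body either destroys the covering (if you shrink) or worsens condition~(a) (if you enlarge). You are correct that the proof of Theorem~\ref{Thm:Compact} invokes Lemma~\ref{lem:convex} with the same unchecked hypothesis, but that is not a repair.

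The paper's proof of the present theorem avoids Lemma~\ref{lem:convex} entirely and argues directly for \emph{all} extreme points, not just exposed ones. If $p\in\conv(K)$ is extreme and lies on only $k<n$ of the amoeba boundaries, then by genericity~(2) the set $B_\eps(p)\cap\bigcap_{j=1}^k\partial\ovl{E_{\alpha(j)}(f_j)}$ is an $(n-k)$-dimensional subset of $K$ with $p$ in its relative interior; using convexity of the complement components together with a nearby interior point of $K$, one assembles a small simplex in $\conv(K)$ containing $p$ in its interior, contradicting extremality. No exposed-point or density argument is required.
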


\begin{proof}
Let $E(K)$ be the set of extreme points of $\conv(K)$. First, we show that $E(K)\subseteq V(K)$.
Let $p\in \conv(K)$ be an extreme point. Then $p$ has to lie on the boundary of $K$ and hence there exists a face of $K$ containing $p$. Let $F$ be the smallest
such face of $K$ with respect to inclusion and dimension and let $k=\dim F$. Without loss of generality (after renumbering), suppose that $F$ can be described by $\bigcap_{j=1}^k \partial\ovl{E_{\alp(j)}(f_j)}$. If $k=n$, then by definition of vertices of $K$, it already follows that $p$ itself is a vertex of $K$. So, assume $k<n$. For $\eps > 0$, we denote by $B_{\eps}(p) \subseteq \R^n$ the closed $n$-ball with radius $\eps$ around $p$. Consider the intersection $B_{\eps}(p) \cap K$. Since $K$ is $n$-dimensional, we have $B_{\eps}(p)\cap (K\setminus \partial K)\neq \emptyset$ and, if $\eps$ is sufficiently small, then 
\begin{eqnarray*}
B_{\eps}(p) \cap \partial K \ = \ B_{\eps}(p) \cap \partial K \cap \left (\bigcap_{j = 1}^k \partial \cA(f_j)\right) & = & B_{\eps}(p) \cap \left (\bigcap_{j = 1}^k \partial \ovl{E_{\alp(j)}(f_j)}\right ).
\end{eqnarray*}
Our genericity condition (2) implies that the intersection $B_{\eps}(p) \cap \left (\bigcap_{j = 1}^k \partial \ovl{E_{\alp(j)}(f_j)}\right )$ is of dimension $n-k\geq 1$ and contains $p$ in its interior. 
 By convexity of the components of the complement of an amoeba, there exist points $p_1,\ldots,p_{n-k},p_{n-k+1}\in B_{\eps}(p) \cap \partial K$ and $q\in B_{\eps}(p)\cap(K\setminus \partial K)$, spanning an $(n-k+1)$-simplex $\Gamma$ that contains $p$ in its interior. If $\eps$ is sufficiently small, then one has $\Gamma\subseteq \conv(K)$. Hence, $p$ lies in the interior of $K$, which is a contradiction since $p$ was chosen to be an extreme points. Thus, $k<n$ cannot happen and this implies $E(K)\subseteq V(K)$. 
 
 We conclude the ``In particular''-statement. By the Krein-Milman Theorem \cite{Krein:Milman}, $\conv(K)$ is the convex hull of its extreme points and we hence obtain
\begin{equation*}
 \conv(K) \ = \ \conv (E(K)) \ \subseteq \ \conv (V(K)) \ = \ P_K.
\end{equation*}
The other inclusion $P_K\subseteq \conv(K)$ follows directly from $V(K)\subseteq K$.

It remains to show that every extreme point of $\conv (V(K))$ is indeed a vertex of $P_K$, i.e., $E(K)\subseteq V(P_K)$. Since $\conv(K)=P_K=\conv( V(K))$ it follows that $\conv(K)$ is a polytope. As such its set of extreme points $E(K)$ and its set of vertices $V(P_K)$ coincide, which shows the claim. 
\end{proof}

We recall that an $n$-dimensional polytope $P$ is \struc{\emph{simple}} if every vertex of $P$ lies in exactly $n$ facets.  
For intersection polytopes the following statement holds.

\begin{prop}
Let $\cF:=\{f_1,\ldots,f_n\}\subseteq\C[\mathbf{z}^{\pm1}]$ be a generic collection of Laurent polynomials and let $K$ be a connected component of $\cI(\cF)$. Then, the corresponding intersection polytope $P_K$ is simple. 
\end{prop}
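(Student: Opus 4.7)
Fix a vertex $p$ of $P_K$. The plan is to show that the tangent cone $T_p P_K$ is a simplicial cone of dimension $n$ generated by $n$ linearly independent rays; then $\NF_p(P_K)$ is also simplicial with exactly $n$ extreme rays, which are in bijection with facets of $P_K$ through $p$. By Theorem~\ref{Thm:ExtremePoints}, $p$ is an extreme point of $\conv(K)$ and lies in $V(K)$. Condition~(3) of genericity combined with Lemma~\ref{Lem:FaceConstruction} yields, for each $j=1,\ldots,n$, a unique component $E_{\alpha(j)}(f_j)$ of the complement of $\cA(f_j)$ with $p\in\partial\ovl{E_{\alpha(j)}(f_j)}$, associated to pairwise distinct Laurent polynomials. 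Since each $E_{\alpha(j)}(f_j)$ is convex (a standard property of amoeba complement components), there is a supporting hyperplane $H_j$ of $\ovl{E_{\alpha(j)}(f_j)}$ at $p$ with outward normal $n_j$, so that $\ovl{E_{\alpha(j)}(f_j)}\subseteq\{x:\langle n_j,x-p\rangle\le 0\}$. Condition~(2) of genericity with $k=n$ forces the $n$ boundaries to meet transversally at $p$, so the $n_j$'s are linearly independent and $C_p:=\bigcap_j\{v:\langle n_j,v\rangle\ge 0\}$ is an $n$-dimensional simplicial cone.

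For a sufficiently small neighborhood $U$ of $p$ one has $\cA(f_j)\cap U=U\setminus E_{\alpha(j)}(f_j)$, and the open simplicial cone $\bigcap_j\{\langle n_j,x-p\rangle>0\}\cap U$ is disjoint from every $E_{\alpha(j)}(f_j)$; therefore it is contained in $K$. This gives $T_p P_K\supseteq C_p$, hence $\NF_p(P_K)\subseteq C_p^{\vee}=\operatorname{cone}(-n_1,\ldots,-n_n)$, a simplicial $n$-dimensional cone.

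The main step, which I expect to be the chief obstacle, is the reverse inclusion $T_p P_K\subseteq C_p$; this ensures that $\NF_p(P_K)$ is the full simplicial cone $C_p^{\vee}$ and not a proper $n$-dimensional subcone (which could have strictly more than $n$ extreme rays, as an $n$-dimensional polyhedral cone inside a simplicial cone need not itself be simplicial). My plan is to invoke the extreme point property of $p$: for $v\in T_p P_K$, decompose $p+\eps v=\sum_i\lam_i^\eps q_i^\eps$ with $q_i^\eps\in K$. A linear functional strictly maximized at $p$ over the compact set $K$, together with compactness (Theorem~\ref{Thm:Compact}), forces the weight on the $q_i^\eps$ outside any fixed neighborhood of $p$ to be $O(\eps)$. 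Combined with the fact that each $\partial\ovl{E_{\alpha(j)}(f_j)}$ is smooth at $p$—so that $K\cap U\subseteq p+C_p$ up to second-order corrections—this shows every limit direction $v$ is a non-negative combination of vectors in $C_p$, hence lies in $C_p$. The delicate point is that a connected component of $\cI(\cF)$ can in general extend far outside $p+C_p$ globally, and the extreme-point concentration argument is exactly what is needed to rule out such distant contributions producing additional extreme rays of $\NF_p(P_K)$. Once $\NF_p(P_K)=C_p^{\vee}$ is established, $p$ lies on exactly $n$ facets of $P_K$, and since this argument applies uniformly to every vertex, $P_K$ is simple.
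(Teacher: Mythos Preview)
Your route is substantially more elaborate than the paper's. The paper gives a two-line counting argument: a vertex $p$ of the $n$-dimensional polytope $P_K$ lies in at least $n$ facets; on the other hand, each facet of $P_K$ through $p$ is determined by one of the $n$ complement components $E_{\alpha(1)}(f_1),\ldots,E_{\alpha(n)}(f_n)$ (one per $f_j$, via Lemma~\ref{Lem:FaceConstruction}), and each such component determines at most one facet, giving at most $n$ facets through $p$. No tangent-cone computation appears.

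Your approach has a genuine gap precisely at the step you flag as the obstacle, and it is not merely a technicality: the equality $T_pP_K=C_p$ you are aiming for is in general \emph{false}. Already for $n=2$ a component $K$ can wrap around a bounded complement component $E_{\alpha(1)}(f_1)$, so that some $q\in K$ satisfies $\langle n_1,q-p\rangle<0$; then $q-p\in T_pP_K\setminus C_p$. (For $n=2$ this does no harm to simplicity because every pointed planar cone has two extreme rays, but it kills the inclusion you are trying to prove.) Your concentration sketch cannot rescue this: in a decomposition $p+\eps v=\sum_i\lambda_i^{\eps}q_i^{\eps}$ the distant points carry weight $\lambda_i^{\eps}=O(\eps)$ but $|q_i^{\eps}-p|=O(1)$, so their contribution to $\eps v$ is $O(\eps)$ and hence their contribution to $v$ is $O(1)$---exactly large enough to push $v$ out of $C_p$. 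What simplicity actually requires is only that $T_pP_K$ have $n$ extreme rays, which is strictly weaker than $T_pP_K=C_p$; the paper's counting argument targets that weaker statement directly and avoids the issue altogether.
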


\begin{proof}
 Let $p$ be a vertex of $P_K$. Since $P_K$ is $n$-dimensional, $p$ has to lie in at least $n$ facets. On the other hand, as $p$ is a vertex of $P_K$, there exist components $E_{\alpha(1)}(f_1),\ldots,E_{\alpha(n)}(f_n)$ of the complements of the corresponding amoebas such that $p$ lies in the intersection of their boundaries. Since every facet containing $p$ is determined by one of those components $E_{\alp(j)}(f_j)$ and since, vice versa, every such component of the complement determines no more than one facet, we conclude that $p$ lies in at most $n$ facets. The claim follows.
\end{proof}

Let $\cF:=\{f_1,\ldots,f_n\}\subseteq \C[\mathbf{z}^{\pm 1}]$. Since a spine is a deformation retract of its corresponding amoeba, it is natural to ask if the different connected components of the intersection $\cI(\cF)$ can already be detected from the intersection of the spines of $\cA(f_1),\ldots,\cA(f_n)$. On the one hand, every intersection point of the spines has to lie in a connected component of $\cI(\cF)$. On the other hand, it is not clear a priori whether every connected component of $\cI(\cF)$ contains a common point of intersection of the spines of $\cA(f_1),\ldots,\cA(f_n)$. To provide an answer to the latter question, we need the following preparatory lemma.

\begin{lemma}
Let $\cF := \{f_1,\ldots,f_n\}\subseteq  \C[\mathbf{z}^{\pm 1}]$ be a generic collection of Laurent polynomials and let $K$ be a connected component of $\cI(\cF)$. For every $1\leq \ell\leq n$, there exist $\alpha$, $\beta\in\New(f_\ell)\cap \Z^n$, with $\alpha\neq \beta$ such that 
\begin{equation*}
\ovl{E_{\alpha}(f_\ell)} \cap K \neq \emptyset \quad \mbox{and}\quad \ovl{E_{\beta}(f_\ell)} \cap K \neq \emptyset.
\end{equation*}
\label{Lem:IntersectionTwoComplementComponents}
\end{lemma}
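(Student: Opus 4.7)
The plan is to argue by contradiction: if for some fixed $\ell$ only a single complement component of $\cA(f_\ell)$ had closure meeting $K$, then the classical convexity of complement components (Forsberg--Passare--Tsikh) would confine $K$ to an at most $(n-1)$-dimensional set, contradicting the $n$-dimensionality established in Theorem~\ref{Thm:Compact}.

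To implement this, I would first attach to each vertex of the intersection polytope a unique complement component of $\cA(f_\ell)$. By Theorem~\ref{Thm:ExtremePoints} every $v\in V(P_K)$ is an extreme point of $\conv(K)$ and hence a vertex of $K$ in the sense of Definition~\ref{Def:VerticesOfComponents}, so $v\in\bigcap_{j=1}^n\partial\cA(f_j)$; genericity condition~(3) then produces a unique $\alp_\ell(v)\in\New(f_\ell)\cap\Z^n$ with $v\in\ovl{E_{\alp_\ell(v)}(f_\ell)}$, yielding a well-defined map $\alp_\ell\colon V(P_K)\to\New(f_\ell)\cap\Z^n$.

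Now suppose, for a contradiction, that $\alp_\ell$ is the constant map with value $\alp$. Since $\ovl{E_\alp(f_\ell)}$ is closed and convex, we obtain
\[
P_K \ = \ \conv(V(P_K)) \ \subseteq \ \ovl{E_\alp(f_\ell)}.
\]
Combining $K \subseteq P_K$ from Theorem~\ref{Thm:ExtremePoints} with $K\subseteq\cA(f_\ell)$ yields
\[
K \ \subseteq \ \ovl{E_\alp(f_\ell)}\cap\cA(f_\ell) \ = \ \partial E_\alp(f_\ell),
\]
where the last equality uses that the open set $E_\alp(f_\ell)$, being a component of the complement of the closed set $\cA(f_\ell)$, is disjoint from $\cA(f_\ell)$. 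But $\partial E_\alp(f_\ell)$ has dimension at most $n-1$, contradicting the fact that $\dim K=n$ from Theorem~\ref{Thm:Compact}.

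Hence $\alp_\ell$ is non-constant on $V(P_K)$. Since the $n$-dimensional polytope $P_K$ has at least $n+1\geq 3$ vertices, we may pick $v_1,v_2\in V(P_K)\subseteq K$ with $\alp:=\alp_\ell(v_1)\neq\alp_\ell(v_2)=:\beta$, supplying the two desired distinct lattice points whose complement components meet $K$. The only delicate ingredient, I expect, is securing uniqueness of the assignment $v\mapsto\alp_\ell(v)$ via genericity condition~(3) and importing the convexity of $\ovl{E_\alp(f_\ell)}$ from the hypersurface amoeba literature; everything else is a clean convexity-plus-dimension argument.
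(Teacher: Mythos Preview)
Your argument is correct and follows essentially the same route as the paper: assume a single complement component $E_\alpha(f_\ell)$ captures all vertices, use convexity of $\ovl{E_\alpha(f_\ell)}$ to trap $P_K$ (and hence $K$) inside it, then intersect with $\cA(f_\ell)$ to force $K\subseteq\partial\ovl{E_\alpha(f_\ell)}$ and contradict $\dim K=n$ from Theorem~\ref{Thm:Compact}. The only cosmetic difference is that the paper phrases the contradiction hypothesis over all of $V(K)$ and cites Lemma~\ref{Lem:FaceConstruction}, whereas you work over $V(P_K)$ and invoke genericity condition~(3) together with Theorem~\ref{Thm:ExtremePoints} for $K\subseteq P_K$; either packaging yields the same convexity-plus-dimension contradiction.
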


\begin{proof}
Assume by contradiction that there exists an $f_\ell$ such that $\ovl{E_{\alpha}(f_\ell)} \cap K\neq \emptyset$ only for a single $\alpha\in \New(f)\cap \Z^n$. Since, by Lemma \ref{Lem:FaceConstruction}, every amoeba $\cA(f_1),\ldots,\cA(f_n)$ contributes to every vertex of $K$, it follows that all vertices of $K$ lie on $\partial\ovl{E_{\alpha}(f_\ell)}$. By convexity of $E_{\alpha}(f_\ell)$ we conclude that $P_K\subseteq \ovl{E_{\alpha}(f_\ell)}$ and thus also $K\subseteq \ovl{E_{\alpha}(f_\ell)}$. As $K\subseteq \cA(f_\ell)$ by definition of $K$, we infer that $K\subseteq \partial \ovl{E_{\alpha}(f_\ell)}$, which implies that $\dim K\leq n-1$. This contradicts Lemma \ref{Thm:Compact} (b) and hence the claim follows. 
\end{proof}

The next theorem shows that for a generic collection of Laurent polynomials $\cF:=\{f_1,\ldots,f_n\}\subseteq \C[\mathbf{z}^{\pm 1}]$ each connected component of $\cI(\cF)$ contains an intersection point of the spines. In particular, if the spines of $\cA(f_1),\ldots,\cA(f_n)$ intersect properly, then their intersection consists only of finitely many points, whose number provides an upper bound for the number of connected components of $\cI(\cF)$

\begin{thm}
Let $\cF:=\{f_1,\ldots,f_n\}\subseteq\C[\mathbf{z}^{\pm 1}]$ be a generic collection of Laurent polynomials and let $K$ be a connected component of $\cI(\cF)$. Then:
\begin{equation*}
K \cap \bigcap_{j = 1}^n \cS(f_j) \ \neq \ \emptyset.
\end{equation*}
Moreover, 
\begin{equation*}
K\cap \cS_(f_1)\cap_{\mathrm{st}}\cdots\cap_{\mathrm{st}}\cS(f_n) \ \neq \ \emptyset.
\end{equation*}
If the spines $\cS(f_1),\ldots,\cS(f_n)$ intersect properly, then the number of connected components of $\cI(\cF)$ is at most $\#\bigcap_{j=1}^n\cS(f_j)$. 
\label{Thm:SpineIntersectsComponents}
\end{thm}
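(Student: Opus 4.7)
The plan is to prove the first assertion by explicitly tracking a vertex of $K$ through the Passare--Rullg{\aa}rd deformation retract of each amoeba onto its spine, and then derive the remaining assertions from it.

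For each $j$, extend the Passare--Rullg{\aa}rd deformation retract to a monotonically decreasing one-parameter family $\{\cA_j^t\}_{t\in[0,1]}$ of closed sets with $\cA_j^0 = \cA(f_j)$, $\cA_j^1 = \cS(f_j)$, and $\cS(f_j) \subseteq \cA_j^t \subseteq \cA(f_j)$ for all $t$. As $t$ increases, the complement components of $\cA_j^t$ grow monotonically, with a canonical order-preserving bijection between the components $E_\alpha(f_j)$ of the complement of $\cA(f_j)$ and the corresponding components $E_\alpha^t(f_j)$ of the complement of $\cA_j^t$; at $t = 1$ these grow to fill the open regions $R_\alpha(f_j)^\circ$ in the complement of the spine. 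Pick any vertex $v = \bigcap_j \partial \overline{E_{\alpha(j)}(f_j)}$ of $K$ (existence guaranteed by Theorem~\ref{Thm:ExtremePoints} via Krein--Milman) and define the tracked point
\[ v^t \; := \; \bigcap_{j=1}^n \partial \overline{E^t_{\alpha(j)}(f_j)}. \]
By the genericity conditions (2) and (3), $v^t$ is a well-defined single point depending continuously on $t$. Since $v^t \in \cA_j^t \subseteq \cA(f_j)$ for each $j$, we have $v^t \in \cI(\cF)$; as the path $t \mapsto v^t$ starts at $v^0 = v \in K$ and lies entirely inside $\cI(\cF)$, connectedness of this path forces $v^t \in K$ for all $t$. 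At $t = 1$, $v^1 \in \bigcap_j \partial R_{\alpha(j)}(f_j) \subseteq \bigcap_j \cS(f_j)$, which establishes the first assertion.

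For the stable intersection statement, apply the first assertion after a generic $\eps$-perturbation $f_j \mapsto f_j^{\eps}$ making the perturbed spines intersect transversely. The component $K$ deforms to a component $K^{\eps}$ of the perturbed intersection, and the first assertion produces points $p^{\eps} \in K^{\eps} \cap \bigcap_j \cS(f_j^{\eps})$. Letting $\eps \to 0$, these accumulate to a point in $K \cap (\cS(f_1)\cap_{\mathrm{st}}\cdots\cap_{\mathrm{st}}\cS(f_n))$ by the definition of the stable intersection. For the cardinality bound, the proper-intersection hypothesis makes $\bigcap_j \cS(f_j)$ a finite set; distinct connected components of $\cI(\cF)$ are pairwise disjoint and each contains at least one point of $\bigcap_j \cS(f_j)$ by the first assertion, so the upper bound follows immediately.

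The principal obstacle is the rigorous construction and analysis of the tracked family $v^t$: one must verify that the intersection of $n$ moving boundary hypersurfaces remains a well-defined single point varying continuously throughout the deformation. This requires the Passare--Rullg{\aa}rd deformation to interact compatibly with the combinatorial complement structure (existence of the components $E^t_\alpha(f_j)$ with the claimed monotonicity and order labels), and the codimension condition from genericity property~(2) to hold uniformly in $t$ so that the relevant $n$ hypersurfaces continue to meet in codimension $n$ throughout the retraction.
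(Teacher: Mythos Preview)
Your approach differs substantially from the paper's. The paper never tracks a point through the retraction; instead it argues by induction on $k$ that $K \cap \bigcap_{\ell \le k} \cS(f_\ell)$ is non-empty and of dimension at least $n-k$. The inductive step combines Lemma~\ref{Lem:IntersectionTwoComplementComponents} (each connected component of the intersection meets the closures of at least two complement components of every $\cA(f_\ell)$) with the fact that any path inside $\cA(f_\ell)$ joining two distinct complement components must cross the spine $\cS(f_\ell)$. The stable-intersection and counting statements are then deduced essentially as you do.

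Your tracking argument, however, has a genuine gap which you flag but do not close. The genericity conditions (2) and (3) concern only the amoebas $\cA(f_j)$ themselves; the intermediate sets $\cA_j^t$ are not amoebas of any Laurent polynomial, and nothing guarantees that $\bigcap_j \partial\overline{E^t_{\alpha(j)}(f_j)}$ is a single point, or even non-empty, for $t \in (0,1]$. Already at $t=0$ this intersection can contain several points (the paper exhibits distinct vertices of $K$ with identical order matrix; see the discussion after Theorem~\ref{Thm:OrderMapConnectivityComponents} and Figure~\ref{Fig:VerticesComponentsVersusConvexHulls}), so the notation ``$v = \bigcap_j \partial\overline{E_{\alpha(j)}(f_j)}$'' is unjustified. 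More seriously, at $t=1$ the boundaries $\partial R_{\alpha(j)}(f_j)$ are pieces of the polyhedral spines, and nothing forces the particular regions $R_{\alpha(1)}(f_1),\dots,R_{\alpha(n)}(f_n)$ selected by the order of $v$ to share a common boundary point. For instance, if some bounded region $R_{\alpha(i)}(f_i)$ happens to lie in the interior of another region $R_{\alpha(j)}(f_j)$ (the hypotheses do not exclude this, even when $\partial E_{\alpha(i)}(f_i)$ and $\partial E_{\alpha(j)}(f_j)$ cross at $t=0$), then $\partial R_{\alpha(i)}(f_i) \cap \partial R_{\alpha(j)}(f_j) = \emptyset$ and the tracked family becomes empty before reaching $t=1$. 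The paper's inductive connectedness argument avoids this entirely by never committing to fixed complement components $\alpha(j)$ in advance.
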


\begin{proof}
Let $K$ be a connected component of $\cI(\cF)$. We will show that for every $J\subseteq [n]$ the intersection $\bigcap_{\ell\in J}\cS(f_\ell)\cap K$ is non-empty and of dimension at least $n-\#J$. 
We proceed by induction on $\#J$.  \\
Let $\#J=1$, i.e., $J:=\{\ell\}$ for some $1\leq \ell\leq n$. By Lemma \ref{Lem:IntersectionTwoComplementComponents}, we know that $K$ intersects the closure of at least two different components $E_{\alpha}(f_\ell),E_{\beta}(f_\ell)$ of the complement of $\cA(f_\ell)$ non-trivially. Let $p$ and $q$ be points in $\ovl{E_{\alpha}(f_\ell)}$ and $\ovl{E_{\beta}(f_\ell)}$, respectively. Since $K$ is connected, there exists a path $\gamma$ from $p$ to $q$ inside $K$. Since $\cS(f_\ell)$ is a deformation retract of $\cA(f_\ell)$ (see Section \ref{SubSec:Tropical}), it follows that $\cS(f_\ell)$ intersects the interior of $\gamma$. Moreover, as $K$ is of dimension $n$ and as $\cS(f_\ell)$ is a tropical $(n-1)$-dimensional hypersurface lying in the interior of $\cA(f_\ell)$, we further conclude that $K\cap\cS(f_\ell)$ is of dimension $n-1$.\\
Now, suppose $\#J=k\geq 2$. Without loss of generality, assume $J=[k]$. It follows by induction that $Q:=\bigcap_{\ell=1}^{k-1}\cS(f_\ell)\cap K$ is non-empty and of dimension at least $n-(k-1)$. Furthermore, assume that $Q$ is connected (otherwise consider a connected component in the sequel). Being part of the intersection of $k-1$ tropical hypersurfaces,  $Q$ has to intersect the boundary of $\partial(K)$. Moreover, as any spine lies in the interior of its amoeba, $Q$ cannot intersect facets of $K$ defined by $\cA(f_1),\ldots,\cA(f_{k-1})$. Since $\bigcap_{\ell=1}^{k-1}\cS(f_\ell)$ is unbounded but $Q$ itself is bounded, it follows that  $Q$ has to intersect  the boundary of every $\cA(f_k),\ldots,\cA(f_n)$. Moreover, using Lemma \ref{Lem:IntersectionTwoComplementComponents}, we can further conclude that $Q$ intersects the boundaries of two different non-empty components, $E_{\alpha}(f_k)$ and $E_\beta(f_k)$ of the complement of $\cA(f_k)$. Let $p\in Q\cap \ovl{E_\alpha(f_k)}$ and $q\in Q\cap \ovl{E_\beta(f_k)}$. Since $Q$ is connected, there exists a path $\gamma$ connecting $p$ and $q$ that lies inside of $Q$. As in the case $\#J=1$ we can conclude that $\gamma$ intersects $\cS(f_k)$ and in particular $Q\cap \cS(f_k)\neq \emptyset$. 
Since the latter is the intersection of $k$ generic tropical hypersurfaces with the $n$-dimensional closed set $K$, we infer that $\bigcap_{\ell=1}^{k}\cS(f_\ell)\cap K$ is of dimension at least $(n-k)$. This completes the proof of the first part.  
If the intersection is proper, then the proof of the first part shows that the dimension of $\bigcap_{\ell=1}^n\cS(f_\ell)\cap K$ is indeed $0$-dimensional. Hence, any connected component of $\cI(\cF)$ contains at least one point of $\bigcap_{j = 1}^n \cS(f_j)$ and the upper bound for the number of connected components follows.

It remains to show the ``Moreover''-part. Let $K$ be a connected component of $\cI(\cF)$. Consider a collection of Laurent polynomials $\cF^\eps=\{f_1^{\eps},\ldots,f_n^\eps\}$ obtained by perturbing the coefficients of $f_1,\ldots,f_n$ by $\eps$. If $\eps$ is sufficiently small, then the connected components of $\cI(\cF)$ and $\cI(\cF^\eps)$ are in $1$ to $1$-correspondence. Let $K^\eps$ be the component corresponding to $K$. By the first part we know that $K^\eps\bigcap_{\ell=1}^n\cS(f_\ell^\eps)\neq \emptyset$ and, choosing $\eps$ sufficiently small, we can even assume that $K\bigcap_{\ell=1}^n\cS(f_\ell^\eps)\neq \emptyset$. As $K$ is closed, it follows that 
$$
\lim_{\eps\to\infty}\left(K\bigcap_{\ell=1}^n\cS(f_\ell^\eps)\right)\neq \emptyset.$$
 The claim follows.
\end{proof}

As an almost immediate consequence of the previous theorem we obtain the Amoeba Bernstein Theorem.

\begin{Bernstein}
Let $\cF:=\{f_1,\ldots,f_n\}\subseteq \C[\mathbf{z}^{\pm 1}]$ be a generic collection of Laurent polynomials and let $c$ be the number of connected components of $\cI(\cF)$. Then $c$ is bounded from above by the number of mixed cells in the subdivision of $\New(f_1) + \cdots + \New(f_n)$, which is induced by the subdivisions of $\New(f_1),\ldots,\New(f_n)$ that are dual to the spines $\cS(f_i),\ldots,\cS(f_n)$. In particular, $c$ is bounded from above by the mixed volume $\MV(\New(f_1),\ldots,\New(f_n))$.

Moreover, this bound is optimal in the following sense: If one deformation retracts every amoeba $\cA(f_j)$ in $\cI(\cF)$ to its spine $\cS(f_j)$, then the number of connected components of the intersection of the deformed amoebas converges to $\MV(\New(f_1),\ldots,\New(f_n))$.
\label{Thm:AmoebaBernstein}
\end{Bernstein}

\begin{proof}
By Theorem \ref{Thm:SpineIntersectsComponents}  $ \cS(f_1)\cap_{\mathrm{st}}\cdots\cap_{\mathrm{st}}\cS(f_n)$ intersects each connected component of $\cI(\cF)$. Let $\Gamma$ be the subdivision of $\New(f_1) + \cdots + \New(f_n)$ which is induced by the subdivisions of $\New(f_1),\ldots,\New(f_n)$ which are pairwise dual to the spines $\cS(f_1),\ldots,\cS(f_n)$. By the Tropical Bernstein Theorem \ref{Thm:TropicalBernsteinTheorem}, we conclude that every element of the intersection $\cS(f_1)\cap_{\mathrm{st}}\cdots \cap_{\mathrm{st}} \cS(f_n)$ is dual to a mixed cell of $\Gamma$. By Lemma \ref{Lem:MixedCells} $\MV(\New(f_1),\ldots,\New(f_n))$ equals the sum of the volumes of the mixed cells of the induced subdivision in $\New(f_1) + \cdots + \New(f_n)$. Since our volume form is a lattice volume form induced by $\Z^n$, every mixed cell has at least volume one and the first part of the statement follows. The second part follows immediately from the Tropical Bernstein Theorem \ref{Thm:TropicalBernsteinTheorem}. 
The ``Moreover''-part is obvious.
\end{proof}

As a corollary of Theorem \ref{Thm:SpineIntersectsComponents} we also obtain the following B{\'e}zout type statement for the intersection of amoebas.

\begin{Bezout}
Let $\cF:=\{f_1,\ldots,f_n\}\subseteq \C[\mathbf{z}^{\pm 1}]$ be a generic collection of Laurent polynomials. For $1\leq j \leq n$, let $d_j := \deg(\trop_S(f_j))$. Let $c$ be the number of connected components of $\cI(\cF)$. Then,
$c\leq  \prod_{j = 1}^n d_j$. In particular, $c \leq \prod_{j = 1}^n \deg(f_j)$.
\label{Cor:AmoebaBezout}
\end{Bezout}

\begin{proof}
By Theorem \ref{Thm:SpineIntersectsComponents} we have the inequality $c \leq \# \bigcap_{j = 1}^n \cS(f_j)$. By the tropical B{\'e}zout theorem we conclude $c \leq \prod_{j = 1}^n d_j$; see e.g., \cite{Maclagan:Sturmfels}. By construction of the spine we know that $\cS(f_j)$ is dual to a regular subdivision of $\New(f_j)$ for every $1 \leq j \leq n$; see Section \ref{SubSec:Tropical}. 
Thus, $c \leq \prod_{j = 1}^n \deg(f_j)$.
\end{proof}

Though we have just seen that the number of connected components of $\cI(\cF)$ is bounded by the mixed volume and the degrees of the initial polynomials, it remains an open question, how many vertices a connected component of $\cI(\cF)$ can have. In particular, it is easy to see that the number of vertices $V(\cF)$ of $\cI(\cF)$ are not bounded by the dimension alone. Indeed, for any $m\in \N$ one can construct amoebas such that there exist components of the complement having $2m$ vertices. Figure \ref{Fig:BoundOfVerticesCounterexample} shows two such examples for $m=4$ and $m=8$. 

\begin{figure}
 \ifpictures
\includegraphics[width=0.4\linewidth]{./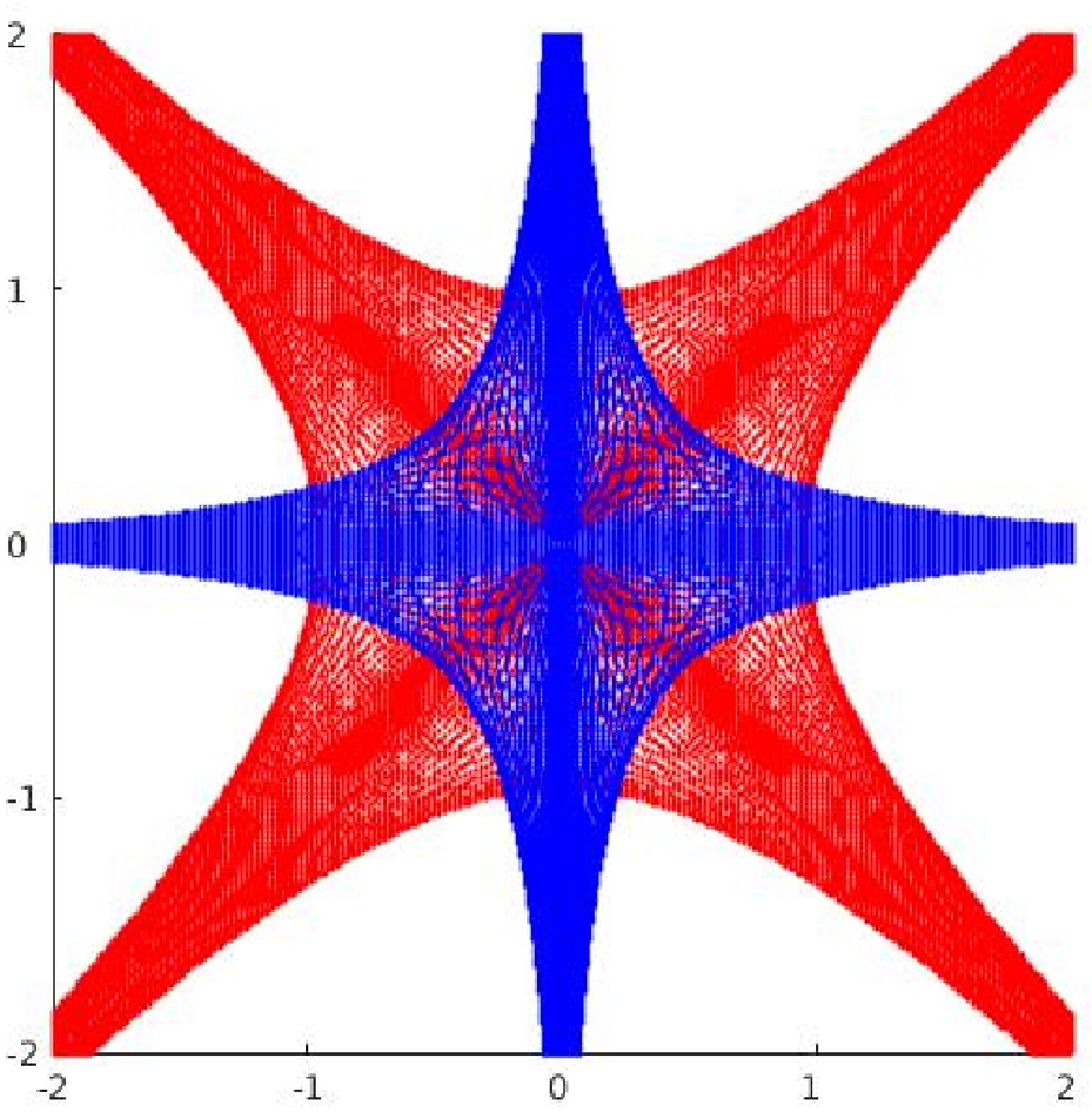} \quad
\includegraphics[width=0.4\linewidth]{./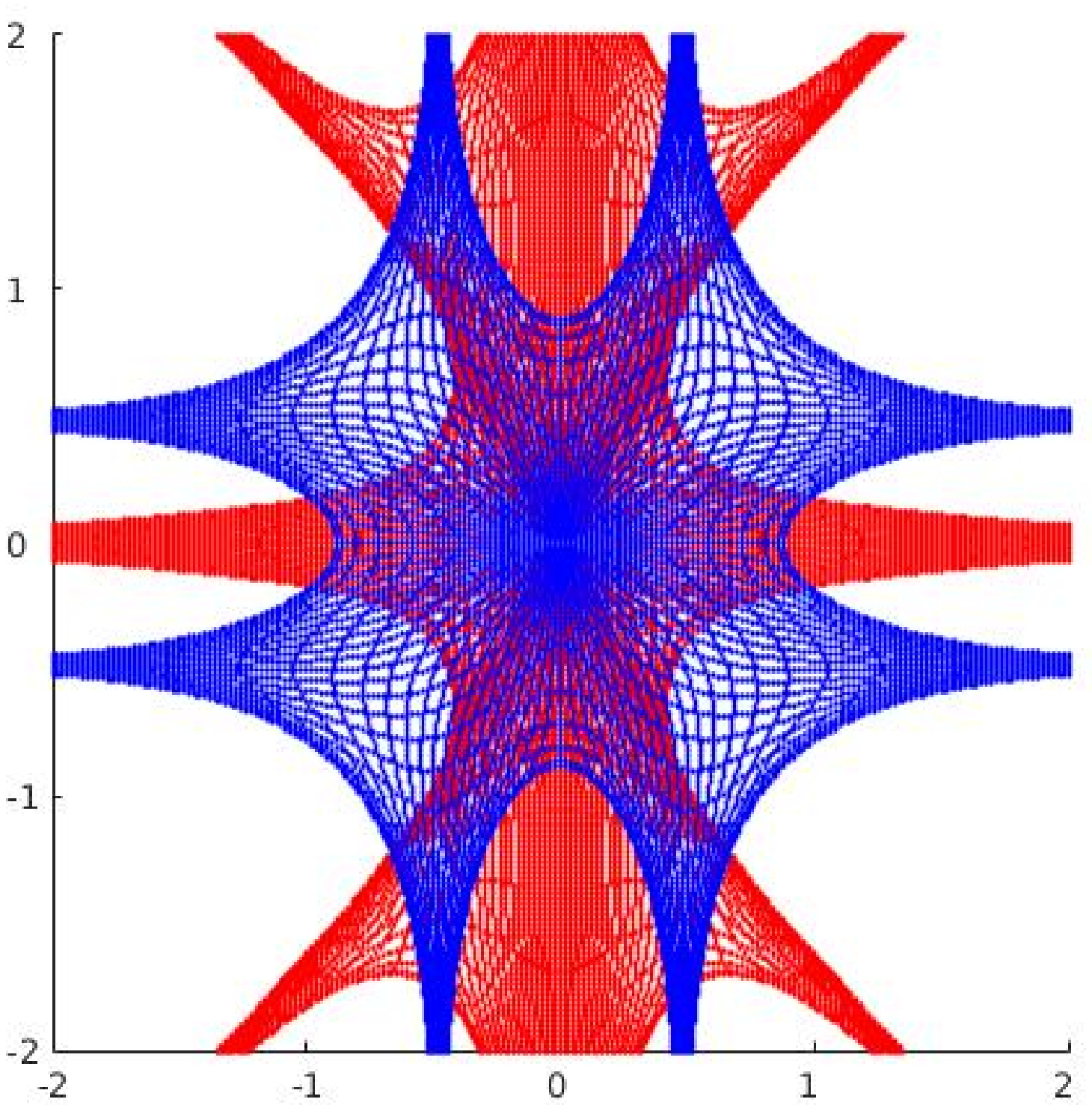} \quad
\fi
\caption{\emph{Left picture:} The intersection of $f_1(z_1,z_2) := z_1^2z_2^2 + z_1^2 + z_2^2 + 1$ and $f_2(z_1,z_2) := z_1^3 + z_2^3 + z_1 + z_2$. \emph{Right picture:} The intersection of $f_1(z_1,z_2) := z_1^3z_2^4 + z_1z_2^4 + 2z_1^4z_2^3 + 2z_2^3 + 2z_1^4z_2 + 2z_2 + z_1^3 + z_1$ and $f_2(z_1,z_2) :=  z_1^4z_2^4 + 3z_1^2z_2^4 + z_2^4 + 3z_1^4z_2^2 + 3z_2^2 + z_1^4 + 3z_1^2 + 1$. One can successively go on to construct intersections of amoebas with additional vertices in $V(\cF)$.}
\label{Fig:BoundOfVerticesCounterexample}
\end{figure}

\section{A Generalized Order Map}
\label{Sec:OrderMap}
In this section, we generalize the order map of an amoeba (see Section \ref{SubSec:OrderMap}) to intersections of generic collections of amoebas.
In order to do so, we first extend the usual order map of an amoeba $\cA(f)$ to all points on its boundary that lie in the closure of a \emph{unique} component of the complement of $\cA(f)$. For simplicity, we refer to those points as \struc{\textit{regular}} points in the following. If $p \in \ovl{E_{\alp}(f)} \cap \partial \cA(f)$ is regular, then we set $\struc{\ord(p)} := \alp$, i.e., the order of a point $p$ on the boundary of an amoeba $\cA(f)$ is the order of the unique component of the complement which contains $p$ in its closure.  We remark that amoebas with non-regular boundary points indeed do exist; see \cite[Figure 2, p. 58]{Rullgard:Diss} for an example.

From now on assume that $\cF:=\{f_1,\ldots,f_n\}\subseteq \C[\mathbf{z}^{\pm1 }]$ is a generic collection of Laurent polynomials. For $1\leq i\leq n$, we use \struc{$\ord_i$} to denote the just described extension of the order map of $f_i$. In order to simplify the notation further, we write $\New(\cF)$ for the Cartesian product of the Newton polytopes of $f_1,\ldots,f_n$, i.e.,
\begin{equation*}
\struc{\New(\cF)} \ := \ \New(f_1) \times \cdots \times \New(f_n).
\end{equation*}

Given these prerequisites, we generalize the order map of amoebas to intersections of amoebas in the following way: The \struc{\textit{generalized order map}} of $\cF$ is defined by

\begin{eqnarray*}
	\struc{\ord_{\cF}}: V(\cF) \to \Z^{n \times n} \qquad p \mapsto \left(\begin{array}{c} \ord_1(p) \\ \vdots \\ \ord_n(p) \\ \end{array}\right).
\end{eqnarray*}
  We refer to \struc{$\ord_{\cF}(p)$} as the \struc{\textit{order matrix}} of $p \in V(\cF)$. 
We want to remark that $\ord_{\cF}$ is well-defined, since, by condition (3) of genericity, every vertex of  $\cI(\cF)$ is regular for all amoebas in the intersection. 
Though the generalized order map $\ord_{\cF}$ is not injective in general, the next theorem shows that this is indeed the case if one restricts to a single intersection polytope.

\begin{thm}
Let $\cF:=\{f_1,\ldots,f_n\}\subseteq \C[\mathbf{z}^{\pm 1}]$ be a generic collection of Laurent polynomials. Let further $K$ be a connected component of $\cI(\cF)$ and let $P_K$ be the corresponding intersection polytope. Let $p$ and $q$ be vertices of $P_K$. Then 
\begin{equation*}
\ord_{\cF}(p)\neq \ord_{\cF}(q).
\end{equation*}
In particular, $\ord_{\cF}$ is injective on $V(P_K)$.
\label{Thm:OrderMapConnectivityComponents}
\end{thm}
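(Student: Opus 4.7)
The plan is to argue by contradiction. I would assume there are distinct vertices $p\neq q$ of $P_K$ with $\ord_\cF(p)=\ord_\cF(q)=(\alpha_1,\ldots,\alpha_n)$ and show that the tangent cone of $P_K$ at $p$ must contain a line through the origin, contradicting the fact that the tangent cone of a polytope at a vertex is always pointed. The main tool will be the convexity of each closed set $L_i:=\ovl{E_{\alpha_i}(f_i)}$: by the equality of order matrices both $p$ and $q$ lie in $L_i$, so convexity of $L_i$ pushes the entire segment $[p,q]$ into $\bigcap_{i=1}^n L_i$.

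The first step would be a convexity dichotomy: for each $i$, since $p,q\in\partial L_i=\partial E_{\alpha_i}(f_i)$ and $L_i$ is convex, the open segment $(p,q)$ lies either entirely in the interior $E_{\alpha_i}(f_i)$ or entirely in $\partial E_{\alpha_i}(f_i)\subseteq\partial\cA(f_i)$. If the boundary alternative held for every $i$, then $(p,q)\subseteq\bigcap_{i=1}^n\partial\cA(f_i)$, which by genericity condition (2) has dimension $0$; this contradicts the $1$-dimensionality of $(p,q)$. So at least one index $i_0$ satisfies $(p,q)\subseteq E_{\alpha_{i_0}}(f_{i_0})$, and in particular $v:=q-p$ is nonzero.

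The main step is a tangent cone computation at $p$. For each $i$, the supporting hyperplane theorem applied to the closed convex set $L_i$ yields an outward normal $n_i$ (pointing away from $L_i$ into $\cA(f_i)$) with $L_i\subseteq\{x:\langle n_i,x-p\rangle\leq 0\}$; since $E_{\alpha_i}(f_i)$ is open one automatically gets the strict inclusion $E_{\alpha_i}(f_i)\subseteq\{x:\langle n_i,x-p\rangle<0\}$. From $q\in L_i$ one reads off $\langle n_i,v\rangle\leq 0$, hence $\langle n_i,-v\rangle\geq 0$ for every $i$. For small $t>0$, this forces $p-tv$ out of every $E_{\alpha_i}(f_i)$, and genericity condition (3), which states that near $p$ the only component of the complement of $\cA(f_i)$ adjacent to $p$ is $E_{\alpha_i}(f_i)$, shows that $p-tv$ lies in no complement component of any $\cA(f_i)$ at all. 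Thus $p-tv\in\cI(\cF)$, and because $p-tv$ is close to $p\in K$, it lies in the same connected component $K$. This yields $-v\in T_pK\subseteq T_pP_K$, while $v\in T_pP_K$ follows from $q\in P_K$ and convexity of $P_K$.

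This closes the argument: $T_pP_K$ is a convex cone containing both $v\neq 0$ and $-v$, hence the entire line $\R v$, which contradicts the standard fact that tangent cones of polytopes at vertices are pointed. The ``In particular''-part of the statement is just a reformulation as injectivity of $\ord_\cF$ on $V(P_K)$. I expect the main delicate point to be the local analysis of $p-tv$, where genericity condition (3) is genuinely needed to rule out other complement components of the individual amoebas accumulating at $p$; the rest is a supporting-hyperplane computation combined with the convex-analytic fact that the tangent cone at a vertex of a polytope contains no line through the origin.
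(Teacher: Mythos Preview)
Your argument is correct and follows the same route as the paper: both use convexity of the $\overline{E_{\alpha_i}(f_i)}$ together with genericity condition~(3) to show that the line through $p$ and $q$ can be extended past one endpoint while remaining in $K\subseteq P_K$, contradicting that endpoint being a vertex of $P_K$. Two minor remarks: your supporting-hyperplane step needs only ordinary convexity where the paper invokes strict convexity of $\overline{E_{\alpha_i}(f_i)}$, and your first ``convexity dichotomy'' paragraph is never used afterwards (the conclusion $v\neq 0$ is immediate from $p\neq q$) and can simply be dropped.
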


\begin{proof}
In order to prove injectivity of the restriction of  $\ord_{\cF}$ to $V(P_K)$, we show that for vertices $p,q\in V(P_K)$ there exists an $\ell$ with $1\leq \ell \leq n$ and $\ord_\ell(p)\neq \ord_\ell(q)$. 

Assume  by contradiction that $\ord(p) = \ord(q)=:(\alpha(1),\ldots,\alpha(n))$, i.e., $\ord_i(p)=\ord_i(q)=\alpha(i)$ for $1\leq i\leq n$. 
Consider the line segment $\sig$ between $p$ and $q$ and let $v:=q-p$ be the corresponding vector pointing from $p$ to $q$. By assumption, Definition \ref{Def:VerticesOfComponents} and Theorem \ref{Thm:ExtremePoints}, we have $p,q\in \overline{E_{\alpha(i)}(f_i)}$ for $1\leq i\leq n$ and, by convexity of $E_{\alpha(i)}(f_i)$, we conclude that $\sig \subseteq \ovl{E_{\alpha(i)}(f_i)}$ for $1\leq i\leq n$. Hence,  $q - \eps \cdot v \in \overline{E_{\alpha(i)}(f_i)}$ for $1\leq i\leq n$ and $0\leq \eps \leq 1$. Moreover, since $q$ lies on the boundary of $\cA(f_i)$ for $1\leq i\leq n$ and $\overline{E_{\alpha(i)}(f_i)}$ is strictly convex, we infer that $q+\eps \cdot v\in \cA(f_i)$ for $\eps>0$ sufficiently small and $1\leq i\leq n$. In particular, $q+\eps \cdot v\in \cI(\cF)$ for $\eps>0$ sufficiently small, i.e., $q+\eps\cdot v\in P_K$ for $\eps >0$ sufficiently small. Hence, $q$ cannot be a vertex of $P_K$ and we obtain a contradiction. 
\end{proof}

Though, by Theorem \ref{Thm:OrderMapConnectivityComponents} the generalized order map is injective on each intersection polytope, this does not have to be true if one considers restrictions of the generalized order map to the vertex set of a connected component of $\cI(\cF)$. For instance, looking at Figure \ref{Fig:VerticesComponentsVersusConvexHulls} one sees that both connected components have  two vertices, that are not vertices of the corresponding intersection polytopes, whose orders are equal. Namely, they equal the order of the two bounded components of the complements of the amoebas. It also follows from the proof of Theorem \ref{Thm:OrderMapConnectivityComponents} that this example describes the only possible other case. More precisely, if $K$ is a connected component of $\cI(\cF)$ and if $\ord(p)= \ord(q)$ for two vertices $p,q\in V(K)$, then neither $p$ nor $q$ can be vertices of the corresponding intersection polytope $P_K$. 

The next proposition describes the normal cones of specific vertices of an intersection polytope. 

\begin{prop}
Let $\cF:=\{f_1,\ldots,f_n\}\subseteq \C[\mathbf{z}^{\pm 1}]$ be a generic collection of Laurent polynomials. Let $K$ be a connected component of $\cI(\cF)$ and let $P_K$ be the corresponding intersection polytope. Let $p\in V(P_K)$ be a vertex of $P_K$ such that $\ord_i(p)$ corresponds to a vertex $v_i$ of $\New(f_i)$ for $1\leq i\leq n$. Then the normal cone of $p$ in $P_K$ contains an affine translation of the intersection of all normal cones of the $v_i$ in $\New(f_i)$ for $1\leq i\leq n$, i.e., there exists a $v\in \R^n$ such that
\begin{equation*}
v+\bigcap_{i=1}^n\NF_{v_i}(f_i) \ \subseteq \ \NF_p(P_K).
\end{equation*}
\end{prop}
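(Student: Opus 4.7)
I would approach the proof by leveraging Theorem \ref{Thm:GKZAmoebaVertex} together with the convexity of the complement components. Since $v_i$ is a vertex of $\New(f_i)$ for each $i$, Theorem \ref{Thm:GKZAmoebaVertex} gives a unique non-empty unbounded component $E_{v_i}(f_i)$ containing an affine translate $w_i+\NF_{v_i}(f_i)$ of the normal cone. In particular, $\NF_{v_i}(f_i)$ is contained in the recession cone of the closed convex set $\ovl{E_{v_i}(f_i)}$, since that recession cone is characterized by the directions along which $\ovl{E_{v_i}(f_i)}$ stretches to infinity.

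Next, by the hypothesis $\ord_i(p)=v_i$ together with our extension of the order map to boundary points, we have $p\in\ovl{E_{v_i}(f_i)}$ for each $i$. The standard recession-cone characterization for closed convex sets then yields
\begin{equation*}
p+\NF_{v_i}(f_i) \ \subseteq \ \ovl{E_{v_i}(f_i)} \qquad \text{for every } 1\leq i\leq n.
\end{equation*}
Intersecting these inclusions, the translated cone $p+\bigcap_{i=1}^n\NF_{v_i}(f_i)$ lies inside $\bigcap_{i=1}^n\ovl{E_{v_i}(f_i)}$. For $c$ in the relative interior of the cone intersection and $t>0$, a standard argument about interiors of closed convex sets places $p+tc$ in the open set $\bigcap_{i=1}^n E_{v_i}(f_i)$. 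Since each $E_{v_i}(f_i)$ is a connected component of the complement of $\cA(f_i)$, this open intersection is disjoint from every $\cA(f_i)$, hence disjoint from $\cI(\cF)$ and in particular from $K$.

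To finish, I would deduce that $\langle c,y-p\rangle\leq 0$ for every $y\in P_K$ and every $c\in\bigcap_{i=1}^n\NF_{v_i}(f_i)$, which is precisely $c\in\NF_p(P_K)$ and, with the translation $v$ taken to be any element of $\NF_p(P_K)$ (possibly $0$), yields the stated inclusion. The argument is by separation: the compact convex polytope $P_K$ and the unbounded open convex set $\bigcap_{i=1}^n E_{v_i}(f_i)$ touch only at $p$, which lies on the boundary of both. Hence there is a supporting hyperplane through $p$ whose outward normal with respect to $P_K$ can be chosen to be $c$, using that the ray $\{p+tc\colon t>0\}$ penetrates the interior of $\bigcap_i E_{v_i}(f_i)$ from $p$.

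The main obstacle will be establishing that $P_K$ and $\bigcap_{i=1}^n E_{v_i}(f_i)$ are actually disjoint and not merely that $K$ is disjoint from this set. A priori, $P_K=\conv(K)$ could intersect a convex region that $K$ itself avoids. The resolution exploits two facts from our setup: the unboundedness of each $E_{v_i}(f_i)$ (it contains an entire affine translate of a full-dimensional cone, by Step~1) versus the compactness of $K$ established in Theorem~\ref{Thm:Compact}; and the convexity of each $E_{v_i}(f_i)$, which prevents the bounded set $K\subseteq\bigcap_i\cA(f_i)$ from wrapping around the unbounded region $\bigcap_i E_{v_i}(f_i)$. Once disjointness is in place, the separation step and the equivalence ``$v+C\subseteq D$ for some $v$'' $\Leftrightarrow$ ``$C\subseteq D$'' for cones $C,D$ through the origin complete the argument.
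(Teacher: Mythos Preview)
Your approach is essentially the paper's: invoke Theorem~\ref{Thm:GKZAmoebaVertex} to place a translate of $\bigcap_i \NF_{v_i}(f_i)$ inside $\bigcap_i E_{v_i}(f_i)$, and then argue that this forces the cone into $\NF_p(P_K)$. Your recession-cone step is in fact sharper than the corresponding step in the paper: the paper passes directly from ``each $E_{v_i}(f_i)$ contains an affine translate of $\NF_{v_i}(f_i)$'' to ``$\bigcap_i E_{v_i}(f_i)$ contains an affine translate of $\bigcap_i \NF_{v_i}(f_i)$,'' which is not immediate since the individual translation vectors $w_i$ may differ; your observation that $\NF_{v_i}(f_i)$ lies in the recession cone of $\ovl{E_{v_i}(f_i)}$ lets you take the common translate at $p$ itself and makes this inference clean. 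Your final remark that, for cones through the origin, $v+C\subseteq D$ is equivalent to $C\subseteq D$ is also a useful clarification of what the ``affine translation'' in the statement really amounts to.

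You also correctly isolate the genuine difficulty: passing from ``$K\cap\bigcap_i E_{v_i}(f_i)=\emptyset$'' to the separation statement ``$\langle c,y-p\rangle\le 0$ for all $y\in P_K=\conv(K)$.'' The paper does not treat this point with any more care than you do; it simply asserts that $\NF_p(P_K)$ ``has to contain all points given by $p+q$ that are not contained in any of the amoebas'' and writes $\bigcap_i E_{v_i}(f_i)\subseteq\NF_p(P_K)$ without further justification. Your proposed resolution (unboundedness of $\bigcap_i E_{v_i}(f_i)$ versus compactness of $K$, convexity ``preventing wrapping'') remains heuristic: in general a compact connected $K$ disjoint from an unbounded open convex $U$ can still satisfy $\conv(K)\cap U\neq\emptyset$ (e.g.\ in $\R^2$ let $U$ be the open epigraph of a parabola and let $K$ consist of an arc of the parabola joined to a box below it). So the obstacle you flag is real, but it is one the paper's own proof shares rather than resolves; your write-up is, if anything, more candid about where the argument is incomplete.
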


\begin{proof}
By assumption and Theorem \ref{Thm:OrderMapConnectivityComponents}, the vertex $p$ is the unique vertex of $P_K$, given by the intersection of $\ovl{E_{v_i}(f_i)}$ for $1\leq i\leq n$. As $v_i$ is a vertex of $\New(f_i)$, Theorem \ref{Thm:GKZAmoebaVertex} implies that the component $E_{v_i}(f_i)$ of the complement of $\cA(f_i)$ contains an affine translation of the normal cone $\NF_{v_i}(f_i)$. Hence, the intersection $\bigcap_{i = 1}^n E_{v_i}(f_i)$ contains an affine translation of $\bigcap_{i = 1}^n \NF_{v_i}(f_i)$. Moreover, the normal cone $\NF_p(P_K)$ has to contain all points given by $p + q$ that are not contained in any of the amoebas $\cA(f_i)$, i.e., $\bigcap_{i = 1}^n E_{v_i}(f_i) \subseteq \NF_p(P_K)$. Thus, we can conclude $v+\bigcap_{i=1}^n\NF_{v_i}(f_i)\subseteq \NF_p(P_K)$ for some $v\in \R^n$. 
\end{proof}

Since there is a relation between components of the complement of a single amoeba and the vertices of the corresponding Newton polytope (see Theorem \ref{Thm:GKZAmoebaVertex}) it is reasonable to ask if such a relation also exists if one considers intersections of amoebas and the Minkowski sum of their Newton polytopes. Before we can provide such a relation, we need to introduce some further notion.

For given Newton polytopes $\New(f_1),\ldots,\New(f_n)$ one defines the \struc{\textit{common refinement} $\NF(f_1,\ldots,f_n)$} of the normal fans $\NF(f_1),\ldots,\NF(f_n)$ as the fan given by all cones of the form $\bigcap_{j = 1}^n \NF_{G_j}(f_j)$ where $\NF_{G_j}(f_j)$ is an arbitrary cone of $\NF(f_j)$.

\begin{definition}
We call a non-empty, full dimensional cone $\bigcap_{j = 1}^n \NF_{G_j}(f_j)$ in the common refinement $\NF(f_1,\ldots,f_n)$ \struc{\emph{mixed}} if for all $1 \leq i \leq n$ it holds that
\begin{eqnarray*}
 \NF_{G_i}(f_i)\cap \bigcap_{j \in [n] \setminus \{i\}}\NF_{G_j}(f_j) & \neq & \NF_{G_i}(f_i)
\end{eqnarray*}  
and
\begin{eqnarray*}
 \NF_{G_i}(f_i)\cap \bigcap_{j \in [n] \setminus \{i\}}\NF_{G_j}(f_j) & \neq & \bigcap_{j \in [n] \setminus \{i\}}\NF_{G_j}(f_j).
\end{eqnarray*} 
\end{definition}

In the following we make use of the well-known fact \cite[Proposition 7.12.]{Ziegler:Book} that the normal fan of a Minkowski sum $\New(f_1) + \New(f_2)$ equals the common refinement of the individual normal fans $\NF(f_1)$ and $\NF(f_2)$.

Since we are aiming at a possible connection between $\cI(\cF)$ and the Minkowski sum of $\New(f_1),\ldots,\New(f_n)$, it is reasonable to consider not only a single intersection polytope but the convex hull of all intersection polytopes. In other words, we are interested in $\conv(V(\cF))$. It is an obvious question which points in $V(\cF)$ are actually vertices of $\conv(V(\cF))$. We are able to provide a characterization of the vertex set of $\conv(V(\cF))$ simply in terms of vertices of the Minkowski sum $\New(f_1)+\cdots +\New(f_n)$. To simplify the notation we write \struc{$\Mink(\cF)$} for this Minkowski sum in the following.

\begin{thm}
Let $\cF:=\{f_1,\ldots,f_n\}\subseteq \C[\mathbf{z}^{\pm 1}]$ be a generic collection of Laurent polynomials. Let $p\in V(\cF)$. 
 Then the following are equivalent:
 \begin{itemize}
  \item[(a)] $p$ is a vertex of $\conv(V(\cF))$.
  \item[(b)] For $1\leq j\leq n$ there exists a vertex $v:=v_1+\cdots +v_n\in \Mink(\cF)$, where $v_j\in \New(f_j)$ is a vertex, such that $p\in\bigcap_{j=1}^n \partial(\ovl{E_{v_j}(f_j)})$ and  $\NF_{v}(\Mink(\cF))$ is a mixed cone.
 \end{itemize}
\label{Thm:NormalFan}
\end{thm}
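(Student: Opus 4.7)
My plan is to prove both directions of the equivalence separately, using Theorem \ref{Thm:GKZAmoebaVertex} to identify $\NF_{v_j}(f_j)$ with the recession cone of $E_{v_j}(f_j)$ whenever $v_j$ is a vertex of $\New(f_j)$, combined with strict convexity of amoeba complement components.

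For direction (b) $\Rightarrow$ (a), I start from $p \in \bigcap_{j=1}^n \partial\overline{E_{v_j}(f_j)}$ with each $v_j$ a vertex of $\New(f_j)$ and $\NF_v(\Mink(\cF)) = \bigcap_j \NF_{v_j}(f_j)$ mixed. Since this cone is $n$-dimensional, I pick $c$ in its interior. The recession-cone property and strict convexity then yield that the open half-line $\{p+tc : t > 0\}$ lies in the interior of every $E_{v_j}(f_j)$, hence outside $\cI(\cF)$. Compactness of $\cI(\cF)$ (Theorem \ref{Thm:Compact}) ensures that $\ell_c := \langle c,\cdot\rangle$ attains its maximum on $\cI(\cF)$ at some $p^*$, and I show $p^*=p$ via a case analysis on $q \in V(\cF)\setminus\{p\}$: if $q$ has some order $w_j \neq v_j$, then $c \notin \NF_{w_j}(f_j)$ (since $c$ lies in the interior of the top-dimensional cone $\NF_{v_j}(f_j)$), so $\ell_c$ is bounded above on $\overline{E_{w_j}(f_j)}$ and a half-space separation forces $\ell_c(q) < \ell_c(p)$; if $q$ shares all orders with $p$, both lie on the common convex set $\bigcap_j \overline{E_{v_j}(f_j)}$, and the mixed hypothesis provides enough freedom inside the mixed cone to perturb $c$ so as to separate $p$ from $q$ strictly.

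For direction (a) $\Rightarrow$ (b), let $p$ be a vertex of $P := \conv(V(\cF))$ with $n$-dimensional normal cone $N_p$, and let $\alpha_j$ denote its unique orders (by genericity condition (3)). A recession-cone dimension count shows that each $\alpha_j$ must be a vertex of $\New(f_j)$: otherwise $E_{\alpha_j}(f_j)$ is bounded (Theorem \ref{Thm:GKZAmoebaVertex}), the inward tangent cone at $p$ has dimension less than $n$, and the $n$-dimensional family of escape directions required by $N_p^{\circ}$ cannot be accounted for. Setting $v_j:=\alpha_j$ produces a vertex $v=\sum v_j$ of $\Mink(\cF)$ with $p \in \bigcap_j \partial\overline{E_{v_j}(f_j)}$. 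To see that $\NF_v(\Mink(\cF))$ is mixed, I argue by contradiction: if without loss of generality $\bigcap_{j\neq i}\NF_{v_j}(f_j) \subseteq \NF_{v_i}(f_i)$, then the $1$-dimensional curve $\bigcap_{j\neq i}\partial\overline{E_{v_j}(f_j)}$ passes through $p$ and lies in $\partial\overline{E_{v_i}(f_i)}$ on both sides of $p$ locally; transversal intersections with $\partial\overline{E_w(f_i)}$ for orders $w$ adjacent to $v_i$ in $\NF(f_i)$ then yield further $V(\cF)$-points on both sides of $p$ along this curve, placing $p$ in the relative interior of a positive-dimensional face of $P$ and contradicting the vertex assumption.

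The main obstacle I anticipate is the explicit construction in (a) $\Rightarrow$ (b) of neighboring $V(\cF)$-points along the curve $\bigcap_{j\neq i}\partial\overline{E_{v_j}(f_j)}$ in the non-mixed case, which requires an adjacency analysis in the common refinement $\NF(f_1,\ldots,f_n)$ together with tight geometric control over boundaries of amoeba complement components; this is precisely the step where the mixed condition visibly governs vertex formation.
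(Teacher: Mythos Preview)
Your overall strategy parallels the paper's, but there are two genuine gaps.

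\medskip

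\textbf{The ``$\alpha_j$ must be a vertex'' step in $(a)\Rightarrow(b)$ is incorrect as stated.} You write that if $\alpha_j$ is not a vertex of $\New(f_j)$ then $E_{\alpha_j}(f_j)$ is bounded, citing Theorem~\ref{Thm:GKZAmoebaVertex}. That theorem only asserts that vertices of $\New(f_j)$ yield unbounded components; it does \emph{not} say the converse. In fact, if $\alpha_j$ lies in the relative interior of a positive-dimensional proper face of $\New(f_j)$, the component $E_{\alpha_j}(f_j)$ is still unbounded, only with a lower-dimensional recession cone. Moreover, even when the recession cone of $E_{\alpha_j}(f_j)$ is not full-dimensional, the \emph{tangent} cone of $E_{\alpha_j}(f_j)$ at $p$ is still a half-space (the boundary is smooth at generic points), so your dimension count of ``escape directions'' does not force a contradiction. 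The paper handles this case differently: it looks at the one-dimensional curve $T=\bigcap_{j\neq i}\partial\ovl{E_{\alpha(j)}(f_j)}$ and uses convexity of $\ovl{E_{\alpha(i)}(f_i)}$ to produce two points of $T\cap\partial\ovl{E_{\alpha(i)}(f_i)}$ with the same order matrix, then invokes the argument of Theorem~\ref{Thm:OrderMapConnectivityComponents} to reach a contradiction.

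\medskip

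\textbf{The ``same orders'' case in $(b)\Rightarrow(a)$ is not resolved by perturbing $c$.} If $q\in V(\cF)$ satisfies $\ord_\cF(q)=\ord_\cF(p)$, then both $p$ and $q$ lie in $\bigcap_j\partial\ovl{E_{v_j}(f_j)}$, and for \emph{every} $c$ in the interior of the mixed cone the ray from each of $p,q$ in direction $c$ enters $\bigcap_j E_{v_j}(f_j)$. There is no evident reason why wiggling $c$ inside that cone should make $\langle c,p\rangle>\langle c,q\rangle$. The paper sidesteps this entirely: it uses the mixed hypothesis to prove that $\bigcap_j\partial\ovl{E_{v_j}(f_j)}$ consists of a \emph{single} point, so the case cannot occur. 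This uniqueness is what makes the mixed condition do real work in this direction, and it is also what underlies the remark following the theorem and Corollary~\ref{Cor:OrderMapInjectivityTotalConvexHull}. Your first sub-case (some $w_j\neq v_j$) has a similar issue: from $c\in\Int(\NF_{v_j}(f_j))$ and $q\in\partial\ovl{E_{w_j}(f_j)}$ one cannot conclude $\langle c,q\rangle<\langle c,p\rangle$ without further input, since $q$ can lie arbitrarily far in a direction where $\langle c,\cdot\rangle$ is large while still avoiding $\ovl{E_{v_j}(f_j)}$.

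\medskip

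In the non-mixed case of $(a)\Rightarrow(b)$ you and the paper in fact attack complementary failures of the mixed condition (you assume $\bigcap_{j\neq i}\NF_{v_j}(f_j)\subseteq\NF_{v_i}(f_i)$, the paper assumes the reverse containment), and your claim that the curve $\bigcap_{j\neq i}\partial\ovl{E_{v_j}(f_j)}$ ``lies in $\partial\ovl{E_{v_i}(f_i)}$ on both sides of $p$'' does not follow from cone containment alone. The paper's route again goes through producing two $V(\cF)$-points on $T$ with identical order matrix and invoking the strict-convexity argument from Theorem~\ref{Thm:OrderMapConnectivityComponents}; you anticipated this as the main obstacle, and it is indeed where the substantive geometry lives.
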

We remark that it follows from the proof of Theorem \ref{Thm:NormalFan} that in the situation of $(b)$, the intersection $\bigcap_{j=1}^n \partial(\ovl{E_{v_j}(f_j)})$ equals a single point, which is a vertex of $\conv(V(\cF))$.

\begin{proof}
Let $P$ denote the polytope that is given as the convex hull of $V(\cF)$, i.e., $P := \conv(V(\cF))$. Note that, by construction, the set of vertices of $P$ is a subset of $V(\cF)$. 

We first show ``$(b) \Rightarrow (a)$'':  
Let $v:=v_1+\cdots +v_n\in \Mink(\cF)$ be a vertex such that $v_j$ is a vertex of $\New(f_j)$ for $1\leq j\leq n$ and $C:=\NF_v(\Mink(\cF))$ is a full-dimensional mixed cone in the common refinement $\NF(\cF):=\NF(f_1,\ldots,f_n)$. Then it holds that $C=\bigcap_{j=1}^n \NF_{v_j}(f_j)$. 
Furthermore, by Theorem \ref{Thm:GKZAmoebaVertex} it follows that $\bigcap_{j = 1}^n \partial \ovl{E_{v_j}(f_j)} \neq \emptyset$. Since $C$ is mixed, we can conclude that $\bigcap_{j = 1}^n \partial \ovl{E_{v_j}(f_j)}$ has to be a single point. Indeed, for $n = 2$, this is obvious, since $\partial \ovl{E_{v_1}(f_1)} \cap \partial \ovl{E_{v_2}(f_2)} \neq \emptyset$ and $\NF_{v_1}(f_1) \cap \NF_{v_2}(f_2) \neq \NF_{v_1}(f_1),\NF_{v_2}(f_2)$. For arbitrary $n$, the claim follows from the same argument applied to every $\partial \ovl{E_{v_i}(f_i)}$, the one-dimensional subset given by $\bigcap_{j \in [n] \setminus \{i\}} \partial \ovl{E_{v_j}(f_j)}$, and the corresponding cones using the assumption that $C$ is mixed. 
Hence, $\bigcap_{j = 1}^n \partial \ovl{E_{v_j}(f_j)}$ equals a point $p$.  By Theorem \ref{Thm:GKZAmoebaVertex} an affine translation of $C$ is contained in $\bigcap_{j = 1}^n  \ovl{E_{v_j}(f_j)}$ and therefore $C$ cannot intersect $P$ in any point but $p$. Thus, we have $(p + C) \cap P = p$ and hence $p$ is a vertex of $P$. 

Next, we show that ``$(a) \Rightarrow (b)$''. 
Let $p$ be a vertex of $P$. It follows from Lemma \ref{Lem:FaceConstruction} that there exist components $E_{\alpha(1)}(f_1),\ldots,E_{\alpha(n)}(f_n)$ of the complements of the amoebas $\cA(f_1),\ldots,\cA(f_n)$ such that $p\in \bigcap_{j=1}^n\partial \ovl{E_{\alpha(j)}(f_j)}$. We prove the claim by contradiction. First assume that $\alpha(1),\ldots,\alpha(n)$ are vertices of $\New(f_1),\ldots,\New(f_n)$ but $\NF_{v}(\Mink(\cF))$ is not a mixed cone for $v:=\alpha(1)+\cdots +\alpha(n)$. Then there exists $1 \leq i \leq n$ such that $\NF_{\alpha(i)}(f_i) \subseteq \bigcap_{j \in [n] \setminus \{i\}}\NF_{\alpha(j)}(f_j)$. Thus, two possible cases concerning the behavior of the corresponding components of the amoebas' complements may occur:

{\sf Case 1:} $\ovl{E_{\alpha(i)}(f_i)} \subset \bigcap_{j \in [n] \setminus \{i\}} E_{\alpha(j)}(f_j)$. But then $p$ cannot be a vertex of $P$ since, by Lemma \ref{Lem:FaceConstruction}, all the $f_i$ need to be distinct, so as to obtain a vertex as the intersection of the boundary of $n$ non-redundant components $\ovl{E_{\alpha(j)}(f_j)}$.

{\sf Case 2:} $\ovl{E_{\alpha(i)}(f_i)} \setminus \bigcup_{j \in [n] \setminus \{i\}} \partial \ovl{E_{\alpha(j)}(f_j)}$ consists of two connected components $S_1, S_2$ satisfying that $S_1$ is contained in $\bigcap_{j \in [n] \setminus \{i\}} E_{\alpha(j)}(f_j)$ and $S_2$ is contained in the complement of $\bigcap_{j \in [n] \setminus \{i\}} \ovl{E_{\alpha(j)}(f_j)}$. 

Consider the set $T := \bigcap_{j \in [n] \setminus \{i\}} \partial \ovl{E_{\alpha(j)}(f_j)}$. Assume first that $S_2 = \emptyset$, i.e., $\ovl{E_{\alpha(i)}(f_i)} \subseteq \bigcap_{j \in [n] \setminus \{i\}}\ovl{E_{\alpha(j)}(f_j)}$ and $\partial\ovl{E_{\alpha(i)}(f_i)}$ intersects $T$ in a single point. 
Since this implies $E_{\alpha(i)}(f_i) \subseteq \bigcap_{j \in [n] \setminus \{i\}}\ovl{E_{\alpha(j)}(f_j)}$, we conclude that $T$ intersects the boundaries of two components of the complement of $\cA(f_i)$ which are distinct from $E_{\alpha(i)}(f_i)$. Since boundaries of such components are of codimension 1 and $T$ is of codimension $n-1$ this intersection yield points $p_1, p_2$, which are contained in $P$. Since we have $T \subseteq  \partial \ovl{E_{\alpha(j)}(f_j)}$ for every $j \neq i$ and $ \partial \ovl{E_{\alpha(j)}(f_j)}$ is strictly convex, the subset of $T$ connecting $p_1$ and $p_2$ is contained in the interior of $P$. And since  $E_{\alpha(i)}(f_i) \subseteq\bigcap_{j \in [n] \setminus \{i\}} \ovl{E_{\alpha(j)}(f_j)}$ it follows that $p$ is contained in the interior of $P$. Thus, $p$ is not a vertex.

Now, assume $S_2 \neq \emptyset$. By construction of Case 2 we have $S_2 \subseteq \Int(P)$, $S_1 \not \subseteq P$, and $\partial \ovl{E_{\alpha(i)}(f_i)}$ intersects $T$ in two points $u$ and $w$. (Here and in the sequel, we use $\Int(A)$ to denote the interior of a set $A$.) We can assume that $u$ and $w$ are vertices of $P$ or otherwise we can argue as in the $S_2 = \emptyset$ case. Since both $u$ and $w$ are given by the intersection of the same components of the complements of $\cA(f_i)$ with $1 \leq i \leq n$ we have $\ord(u) = \ord(w)$. But that is impossible with an analog argument as in the proof of Theorem \ref{Thm:OrderMapConnectivityComponents}.

Finally, assume that $p\in \bigcap_{j=1}^n\partial \ovl{E_{\alpha(j)}(f_j)}$ is a vertex of $P$ and at least one $\ovl{E_{\alpha(i)}(f_i)}$ does not correspond to a vertex of $\New(f_j)$. Namely, in this case we can consider the same set $T$ as above and by convexity of $\ovl{E_{\alpha(i)}(f_i)}$ the intersection $T \cap \partial \ovl{E_{\alpha(i)}(f_i)}$ is of cardinality two. Moreover, both points of intersection have the same order, which, as in the proof of Theorem \ref{Thm:OrderMapConnectivityComponents}, again yields a contradiction.
\end{proof}
We remark that if $p\in \conv(V(\cF))$ is a vertex, then by the proof of Theorem \ref{Thm:NormalFan} we do not have  $p\in \bigcap_{j=1}^n \partial \ovl{E_{v_j}(f_j)}$ with $v_j$ as in Theorem \ref{Thm:NormalFan} (b) alone, but it even holds that $p=\bigcap_{j=1}^n \partial \ovl{E_{v_j}(f_j)}$. Using this fact, the following is an immediate consequence of Theorem \ref{Thm:NormalFan}.

\begin{cor}
Let $\cF:=\{f_1,\ldots,f_n\}\subseteq\C[\mathbf{z}^{\pm 1}]$ be a generic collection of Laurent polynomials. Let $P$ be the convex hull of the vertices of $\cI(\cF)$, i.e., $P:=\conv(V(\cF))$ and let $V(P)$ be the vertex set of $P$.  
Then, the restriction of the order map $\ord_{\cF}$ to $V(P)$ is injective.
\label{Cor:OrderMapInjectivityTotalConvexHull}
\end{cor}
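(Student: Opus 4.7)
The plan is to derive the corollary directly from Theorem \ref{Thm:NormalFan} together with the remark immediately following its proof. The key observation is that the theorem gives a complete combinatorial description of the vertices of $P = \conv(V(\cF))$: each such vertex $p$ is uniquely associated to a mixed vertex $v = v_1 + \cdots + v_n$ of $\Mink(\cF)$ with $v_j$ a vertex of $\New(f_j)$, and moreover the remark asserts the stronger equality $p = \bigcap_{j=1}^n \partial \ovl{E_{v_j}(f_j)}$, i.e.\ the tuple $(v_1,\ldots,v_n)$ pins down $p$ exactly.

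First I would take two vertices $p, q \in V(P)$ with $\ord_{\cF}(p) = \ord_{\cF}(q)$. By Theorem \ref{Thm:NormalFan}, to each of $p, q$ we associate an $n$-tuple of vertices of the Newton polytopes $\New(f_1),\ldots,\New(f_n)$, namely $(\ord_1(p),\ldots,\ord_n(p))$ and $(\ord_1(q),\ldots,\ord_n(q))$, respectively, and in both cases the sum of these vertices yields a vertex of $\Mink(\cF)$ whose normal cone is mixed. The assumption $\ord_{\cF}(p) = \ord_{\cF}(q)$ means that these two tuples coincide; write $(v_1,\ldots,v_n)$ for the common tuple.

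Second, I would invoke the remark after Theorem \ref{Thm:NormalFan}, which states that the intersection $\bigcap_{j=1}^n \partial \ovl{E_{v_j}(f_j)}$ consists of a single point, and this point is precisely the vertex of $P$ associated to $v = v_1 + \cdots + v_n$. Since both $p$ and $q$ lie in $\bigcap_{j=1}^n \partial \ovl{E_{v_j}(f_j)}$, we conclude $p = q$, establishing injectivity of $\ord_{\cF}|_{V(P)}$.

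The main (and essentially only) obstacle is properly citing and applying the single-point statement from the remark after Theorem \ref{Thm:NormalFan}; without this uniqueness, one would only know that $p$ and $q$ lie in a common zero-dimensional set, but not that the set has cardinality one. Once this is in hand, no further computation is needed, so the proof is very short — essentially a one-line consequence of Theorem \ref{Thm:NormalFan}.
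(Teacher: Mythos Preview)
Your proposal is correct and follows essentially the same approach as the paper's proof: both invoke Theorem \ref{Thm:NormalFan} together with the subsequent remark that $p = \bigcap_{j=1}^n \partial \ovl{E_{v_j}(f_j)}$, so that the order matrix $(v_1,\ldots,v_n)$ determines $p$ uniquely. The paper's version is just the terse contrapositive of your argument.
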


\begin{proof}
Let $p$ be a vertex of $P$. Then, as remarked above, Theorem \ref{Thm:NormalFan} implies that there exist vertices $v_j\in \New(f_j)$, $1\leq j\leq n$, such that $p=\bigcap_{j=1}^n \partial \ovl{E_{v_j}(f_j)}$. 
Since $\ord_{\cF}(p)=(v_1,\ldots,v_n)$ the claim follows.
\end{proof}

In the following, we associate another canonical polytope to a generic collection of Laurent polynomials $\cF:=\{f_1,\ldots,f_n\}$. Let $\mathcal{V}$ denote the image of the vertex set of $\cI(\cF)$ under the order map, i.e., $\struc{\mathcal{V}}:=\ord_{\cF}(V(\cF))$. We call the convex hull of $\mathcal{V}$ the \struc{\textit{order polytope}} associated to $\cF$. We denote this polytope by \struc{$\cO(\cF)$}.

We remark that if one considers generic collections $\cF:=\{f_1,\ldots,f_k\}\subseteq\C[\bf{z}^{\pm 1}]$ of $k < n$ Laurent polynomials, then one can generalize the order map in a similar way as above. Indeed, if a point $p$ belongs to the intersection of the boundaries of $\cA(f_1),\ldots,\cA(f_k)$, then, as before, one finds unique components $E_{\alpha(1)}(f_1),\ldots,E_{\alpha(k)}(f_k)$ of amoeba complements such that $p$ lies in their closure. One can now define the order matrix of $p$ to be the matrix with rows $\alpha(1),\ldots, \alpha(k)$. In this case the order map is a map from $\bigcap_{i=1}^k\partial\cA(f_i)$ to $\Z^{k\times n}$. Moreover, one can then define the order polytope of $\cF:=\{f_1,\ldots,f_k\}$ as the convex hull of the image of the generalized order map. Since this generalization, however, is irrelevant for our purposes, we do not pursue this direction further. The following is an easy example of an order polytope in the case that $k<n$. 

\begin{example}
Consider the trivial case $\cF := \{f\}$. Then $\cO(\cF)$ is the convex hull of the image of the usual order map $\ord$ applied to $\R^n \setminus \cA(f)$. By Theorem \ref{Thm:OrderMap} $\cO(\cF)$ is a lattice polytope contained in $\New(f)$. And by Theorem \ref{Thm:GKZAmoebaVertex} it follows that $\cO(\cF) = \New(f)$.
\end{example}

If $\cF$, however, is a collection of more than one Laurent polynomial, then $\cO(\cF)$ becomes less trivial. We provide some initial statements about the general case.

\begin{thm}
Let $\cF:=\{f_1,\ldots,f_n\}\subseteq\C[\mathbf{z}^{\pm 1}]$ be a generic collection of Laurent polynomials. Then:
\begin{itemize}
\item[(a)] $\cO(\cF)$ is a lattice polytope and contained in $\New(f_1) \times \cdots \times \New(f_n)$.
\item[(b)] Let $k$ denote the number of mixed cones in the common refinement $\NF(f_1,\ldots,f_n)$. Then $\cO(\cF)$ and $\New(\cF)$ have a least $k$ vertices in common. Let $\ord_\cF(p)$ be one of these joint vertices and let $\NF_{\ord_\cF(p)}(\New(\cF))$ be its corresponding normal cone in of $\New(\cF)$.  Then $\NF_{\ord_\cF(p)}(\New(\cF))$ contains a real $(n\times n)$-matrix in the interior with all rows equal.
\end{itemize}
\end{thm}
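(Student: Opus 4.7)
For part (a), my plan is to observe that every vertex $p \in V(\cF)$ is regular by genericity condition (3), so each row $\ord_i(p)$ of the order matrix is a lattice point in $\New(f_i)$ by Theorem \ref{Thm:OrderMap} combined with the definition of the extended order map. Hence $\ord_\cF(p) \in \New(\cF) \cap \Z^{n \times n}$, and passing to the convex hull immediately yields (a).

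For part (b), the strategy is to exhibit, for every full-dimensional mixed cone $C$ of the common refinement $\NF(f_1,\ldots,f_n)$, one explicit point that is simultaneously a vertex of $\cO(\cF)$ and of $\New(\cF)$. First I would argue that full-dimensionality of $C = \bigcap_{j=1}^n \NF_{G_j}(f_j)$ forces each factor $\NF_{G_j}(f_j)$ to be $n$-dimensional and hence each $G_j$ to be a vertex $v_j$ of $\New(f_j)$. Using the classical identification of $\NF(\Mink(\cF))$ with the common refinement, $v := v_1 + \cdots + v_n$ is then a vertex of $\Mink(\cF)$ whose normal cone $C$ is mixed in the sense preceding Theorem \ref{Thm:NormalFan}. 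Applying the implication $(b)\Rightarrow(a)$ of that theorem produces a vertex $p$ of $\conv(V(\cF))$ with $p \in \bigcap_j \partial\ovl{E_{v_j}(f_j)}$, so $\ord_\cF(p) = (v_1,\ldots,v_n) \in \cO(\cF)$. Because each $v_j$ is a vertex of $\New(f_j)$, the tuple $(v_1,\ldots,v_n)$ is automatically a vertex of the product polytope $\New(\cF)$, and since $\cO(\cF) \subseteq \New(\cF)$ by (a), the standard fact that a vertex of a polytope lying in a subpolytope is a vertex of the subpolytope forces $\ord_\cF(p)$ to be a vertex of $\cO(\cF)$ as well. Distinct mixed cones yield distinct vertex tuples, producing at least $k$ common vertices.

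For the final assertion about the normal cone, I would use the factorization
\[
\NF_{\ord_\cF(p)}(\New(\cF)) \ = \ \NF_{v_1}(f_1) \times \cdots \times \NF_{v_n}(f_n) \ \subseteq \ \R^{n \times n},
\]
valid for Cartesian products of polytopes at their product vertices. A matrix whose rows are all equal to some $c \in \R^n$ corresponds to the point $(c,\ldots,c)$, which lies in the interior of this product cone exactly when $c$ lies in the interior of every factor $\NF_{v_j}(f_j)$. Since the mixed cone $C = \bigcap_j \NF_{v_j}(f_j)$ is $n$-dimensional by assumption, its interior is nonempty; and any point in $\Int(C)$ automatically lies in the interior of each full-dimensional factor $\NF_{v_j}(f_j)$ (a general fact: an open ball around an interior point of the intersection is contained in each of the cones). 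Such a $c$ gives the required witness.

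The main conceptual hurdle is not a difficult calculation but the translation between mixed cones in the common refinement, mixed vertices of $\Mink(\cF)$, and vertices of $\conv(V(\cF))$; once Theorem \ref{Thm:NormalFan} is invoked in the right direction, everything else reduces to standard facts about products of polytopes and interiors of convex cones.
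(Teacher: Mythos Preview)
Your proposal is correct and follows essentially the same route as the paper: part~(a) is identical, and for part~(b) you invoke Theorem~\ref{Thm:NormalFan} to pass from each mixed cone to a vertex $p$ of $\conv(V(\cF))$ with $\ord_\cF(p)=(v_1,\ldots,v_n)$, then use $\cO(\cF)\subseteq\New(\cF)$ together with the product structure of $\New(\cF)$ to conclude that $(v_1,\ldots,v_n)$ is a common vertex, and finally exploit the factorization $\NF_{\ord_\cF(p)}(\New(\cF))=\prod_j\NF_{v_j}(f_j)$ to produce the matrix with equal rows. The only cosmetic difference is that the paper phrases the first step via the \emph{bijection} between mixed cones and vertices of $\conv(V(\cF))$ (the remark following Theorem~\ref{Thm:NormalFan}), whereas you run the implication $(b)\Rightarrow(a)$ directly and observe that distinct mixed cones yield distinct tuples; the content is the same.
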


\begin{proof}
\begin{asparaenum}
\item[(a)] Since $\ord_{\cF}(p)\in \Z^{n\times n}$, Theorem \ref{Thm:OrderMap} implies that $\cO(\cF)$ is a lattice polytope. Consider a point $p\in V(\cF)$. Theorem \ref{Thm:OrderMap} implies that $\ord_i(p)\in \New(f_i)$ for $1\leq i\leq n$ and hence $\ord_{\cF}(p)\in \New(\cF)$. Since, as a polytope, $\New(\cF)$ is convex, $\cO(\cF)$ is contained in $\New(f_1)\times \cdots \times \New(f_n)$.
\item[(b)] By Theorem \ref{Thm:NormalFan} and the succeeding remark, there exists a bijection between the mixed cones in the common refinement $\NF(f_1,\ldots,f_n)$ and the vertices of $\conv(V(\cF))$. Let $p := \bigcap_{j = 1}^n \ovl{E_{\alp(i)}(f_i)}$ be a vertex of  $\conv(V(\cF))$. By Theorem \ref{Thm:NormalFan} (b) every $\alp(i)$ is a vertex in $\New(f_i)$. Thus, $\ord_\cF(p) = (\alp(1),\ldots,\alp(n))$ is a vertex of $\New(f_1) \times \cdots \times \New(f_n)$. Since $\cO(\cF) \subseteq \New(f_1) \times \cdots \times \New(f_n)$ and $\ord_{\cF}(p)\in \cO(\cF)$, it follows that $\ord_\cF(p)$ is also a vertex of $\cO(\cF)$. Therefore, $\cO(\cF)$ and $\New(f_1) \times \cdots \times \New(f_n)$ have at least $k$ vertices in common.

Furthermore, by Theorem \ref{Thm:NormalFan}, the cone $\NF_{\alp(1) + \cdots + \alp(n)}(\Mink(\cF))$ corresponding to $p$ is a mixed cone. In particular, the intersection of the normal cones $\NF_{\alp(1)}(f_1),\ldots,\NF_{\alp(n)}(f_n)$ is a full-dimensional cone. Thus, there exists a vector $(v_1,\ldots,v_n) \in \Int(\bigcap_{j = 1}^n \NF_{\alp(j)}(f_j))$.
By construction the normal cone $\NF_{\ord(p)}(\New(\cF))$ is the Cartesian product of the normal cones $\NF_{\alp(1)}(f_1),\ldots,\NF_{\alp(n)}(f_n)$. Hence, $(v_1,\ldots,v_n) \in \Int(\bigcap_{j = 1}^n \NF_{\alp(j)}(f_j))$ if and only if
\begin{eqnarray*}
	\left(\begin{array}{ccc}
		v_1 & \cdots & v_n \\ \vdots & \ddots & \vdots \\ v_1 & \cdots & v_n \\
	\end{array}\right) & \in & \Int (\NF_{\ord(p)}(\New(\cF))).
\end{eqnarray*}
\end{asparaenum}
\end{proof}

\begin{example}
We consider once more the intersection $\cI(\cF)$ with $\cF$ given by $f_1(z_1,z_2) := z_1^2z_2^2 + z_1^2 + z_2^2 + 1$ and $f_2(z_1,z_2) := z_1^3 + z_2^3 + z_1 + z_2$. $\cI(\cF)$ is shown in the left picture of Figure \ref{Fig:BoundOfVerticesCounterexample}; $V(\cF)$ has 8 elements, which are all vertices in $\conv(V(\cF))$. Hence, all elements correspond to a unique vertex in the Minkowski sum; see Figure \ref{Fig:MinkowskiSum} . $\New(f_1) \times \New(f_2)$ is a polytope with 16 vertices given by the product of two rectangles in $\R^4$. It follows that $\cO(\cF)$ is the convex hull of 8 of these 16 vertices as shown in the Schlegel diagram in Figure \ref{Fig:SchlegelDiagram}.
\end{example}

\begin{figure}
 \ifpictures
  \includegraphics[width=0.32\linewidth]{./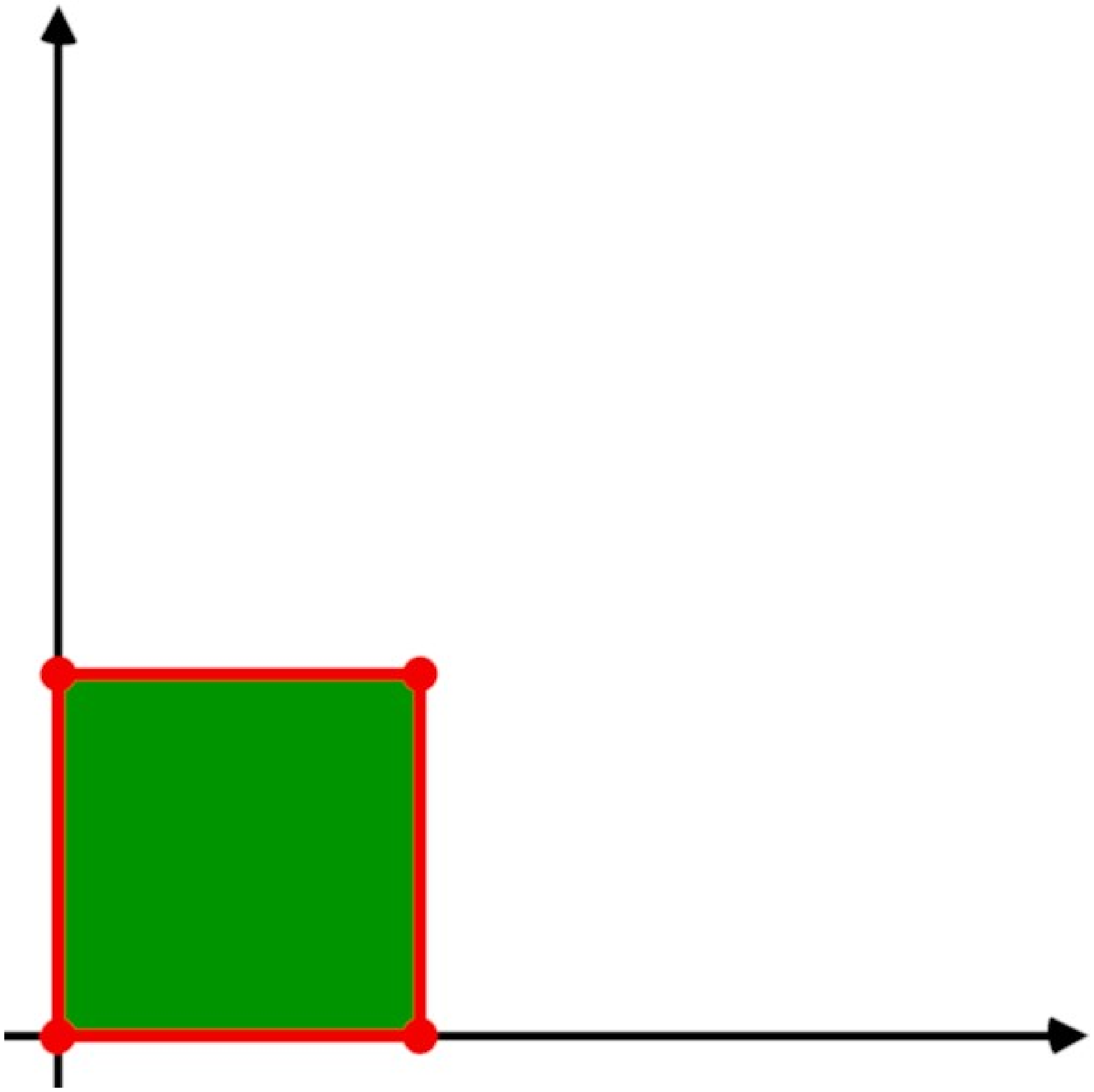}
  \includegraphics[width=0.32\linewidth]{./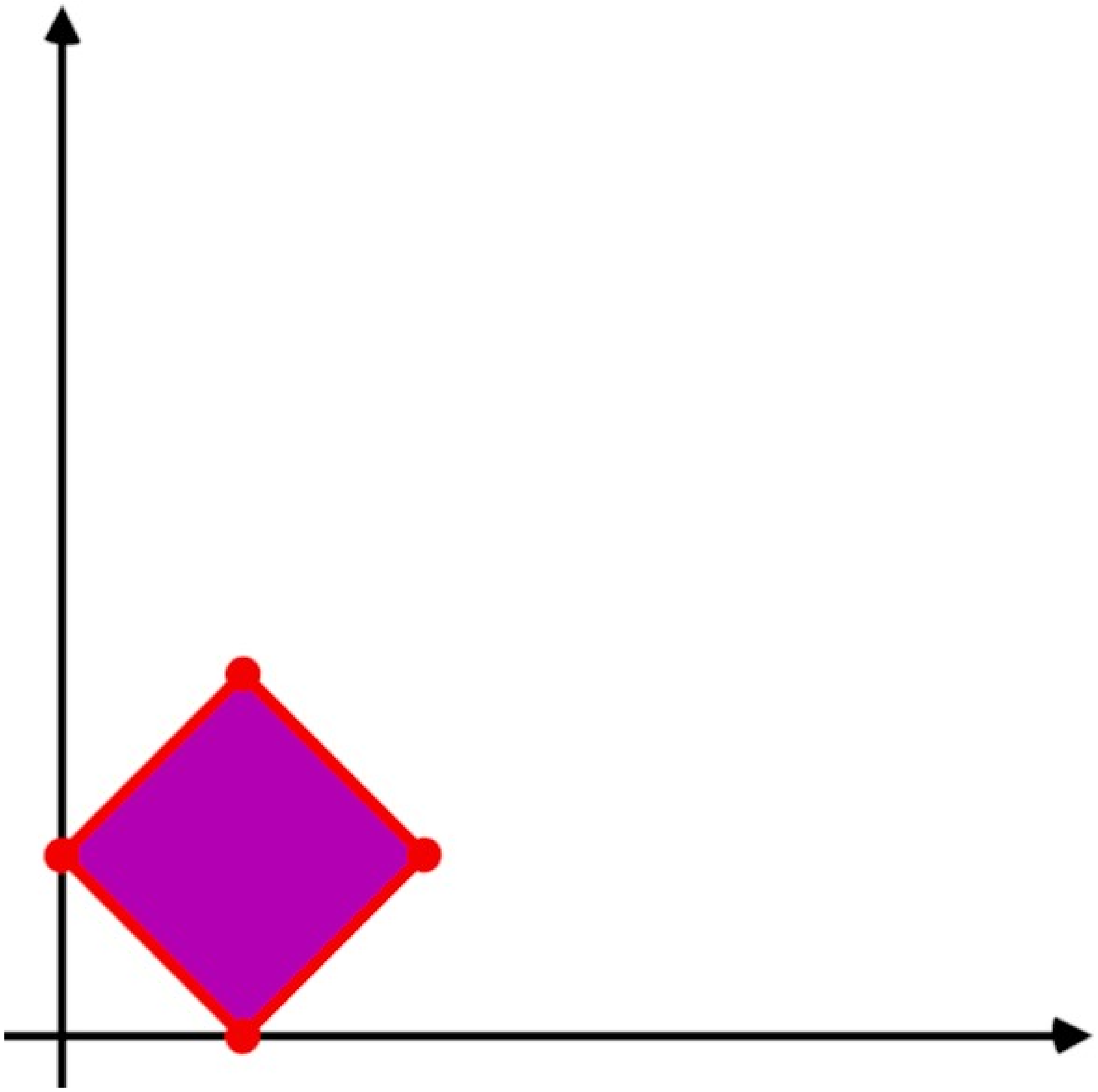}
  \includegraphics[width=0.32\linewidth]{./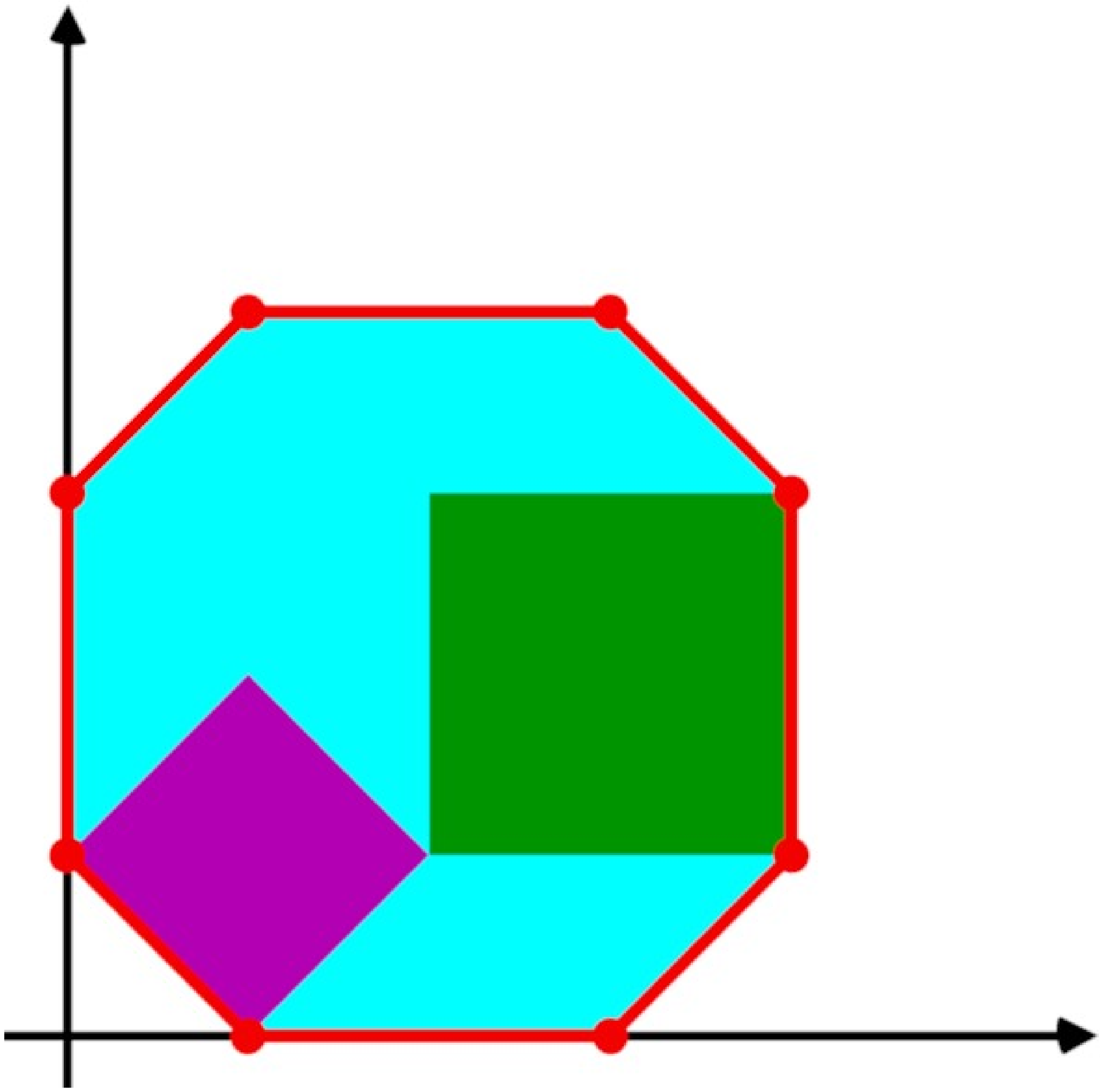}
 \fi
\caption{The Newton polytopes of $f_1(z_1,z_2) := z_1^2z_2^2 + z_1^2 + z_2^2 + 1$ and $f_2(z_1,z_2) := z_1^3 + z_2^3 + z_1 + z_2$ and their Minkowski sum.}
\label{Fig:MinkowskiSum}
\end{figure}

\begin{figure}
 \ifpictures
  \includegraphics[width=0.32\linewidth]{./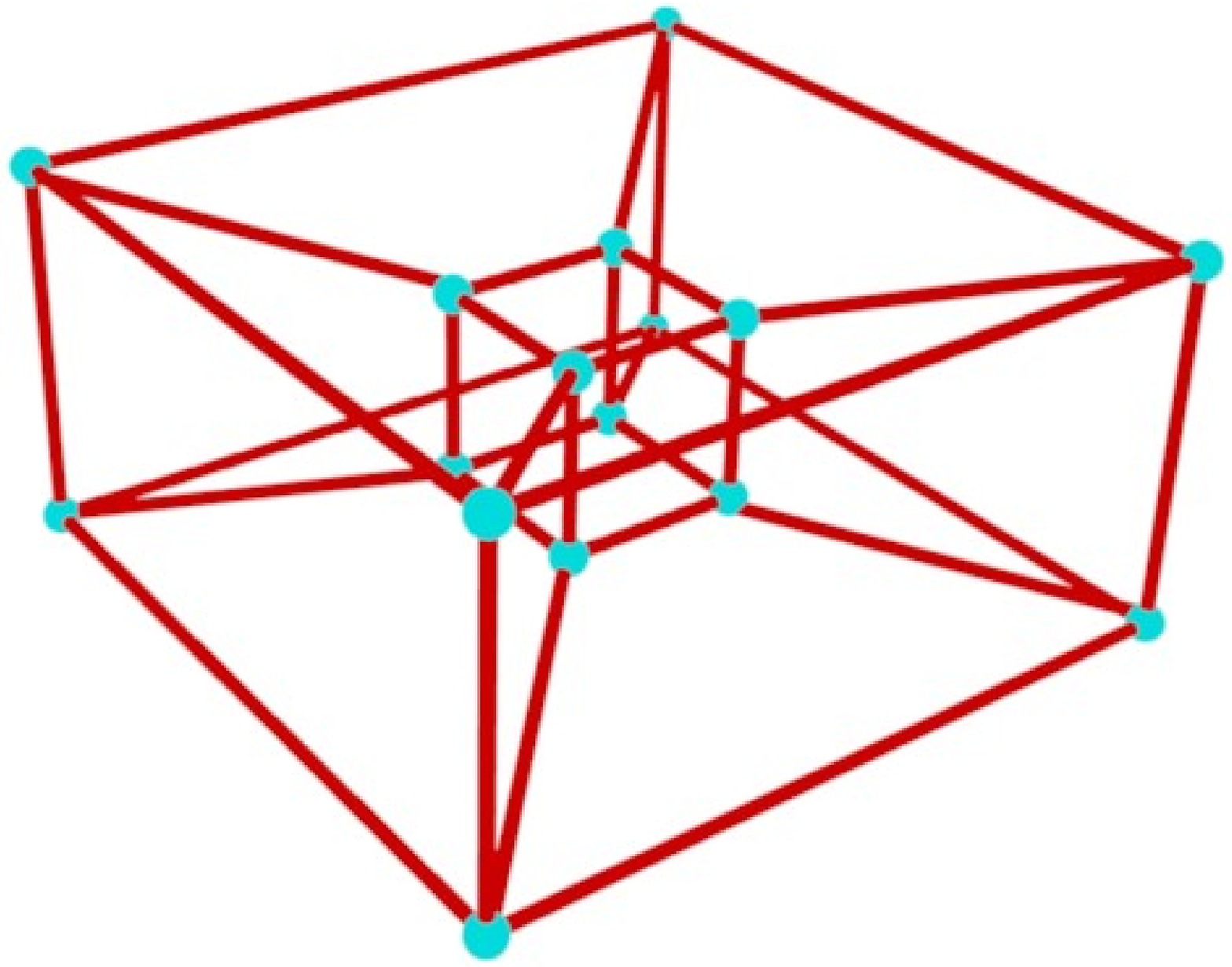}
  \includegraphics[width=0.32\linewidth]{./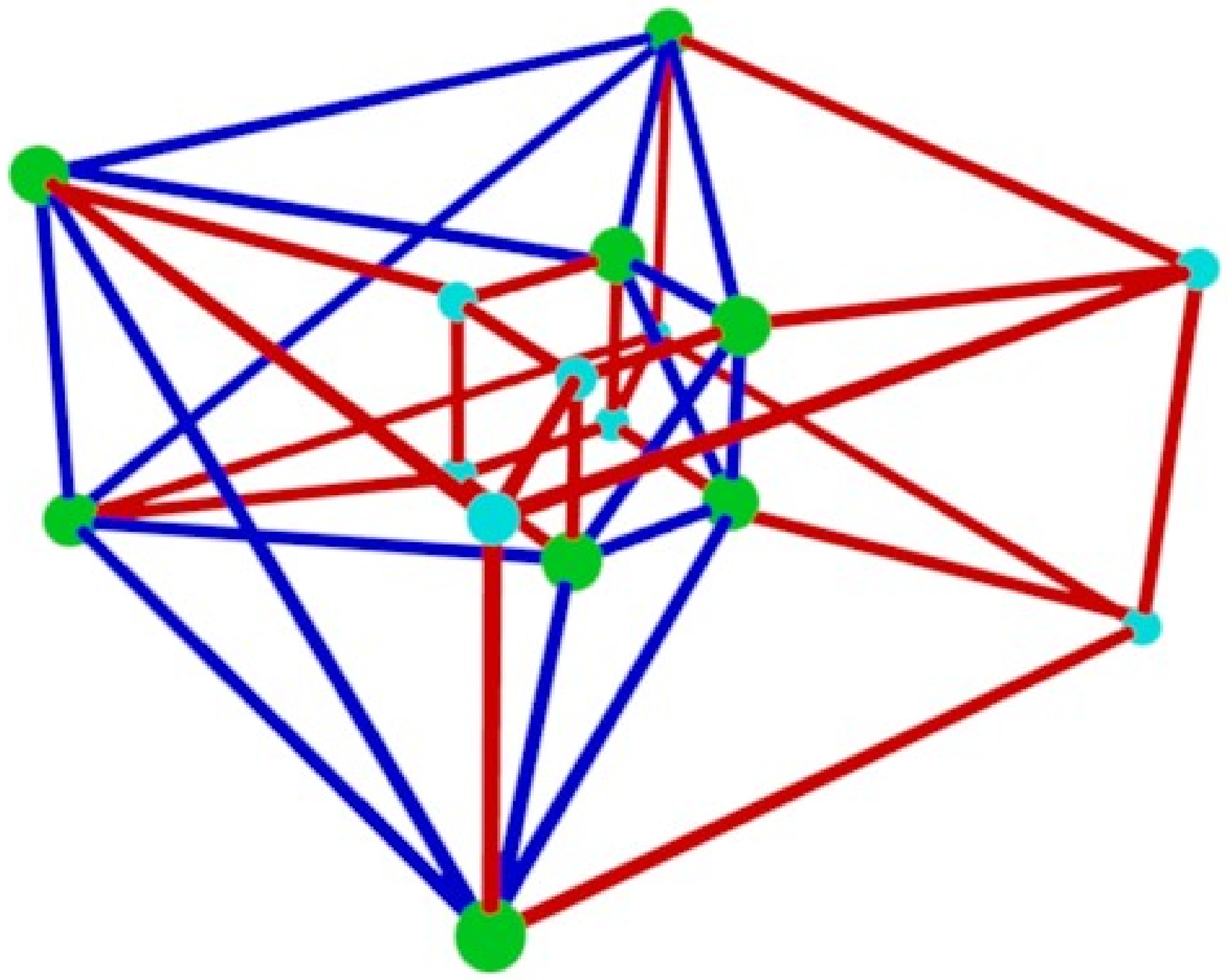}
  \includegraphics[width=0.32\linewidth]{./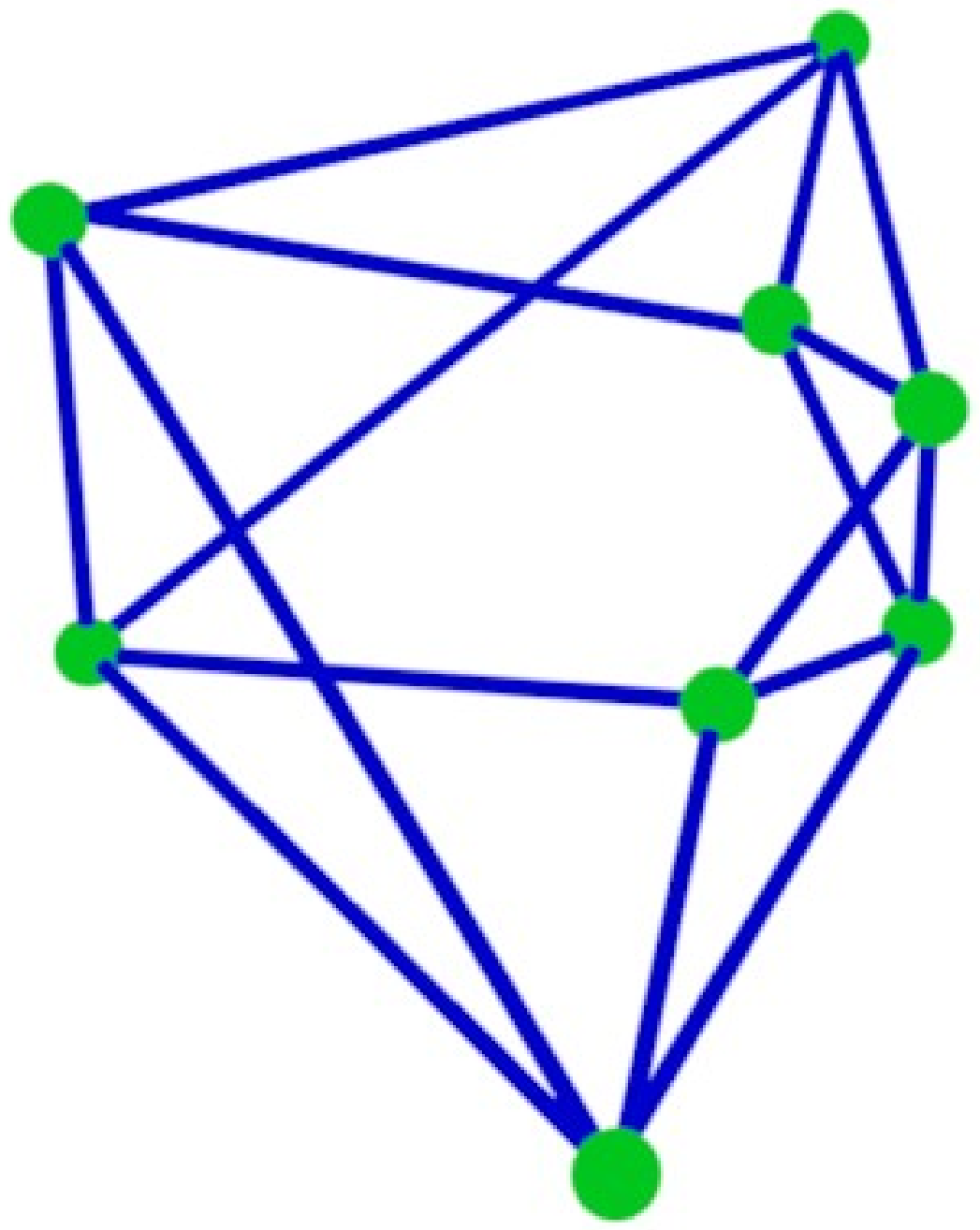}
 \fi
\caption{The Schlegel diagram of $\New(f_1) \times \New(f_2)$ with $f_1,f_2$ as in Figure \ref{Fig:MinkowskiSum} together with the order polytope $\cO(\cF)$ for $\cF := \{f_1,f_2\}$ both inside $\New(f_1) \times \New(f_2)$ and without $\New(f_1) \times \New(f_2)$.}
\label{Fig:SchlegelDiagram}
\end{figure}

\appendix

\section{Genericity}
\label{Sec:AppendixGenericity}

For a given $f \in \C[\mathbf{z}^{\pm 1}]$ with $\cV(f) \subseteq (\C^*)^n$ we define the \struc{\textit{unlog amoeba $\cU(f)$}} as the image of $\cV(f)$ under the map

\begin{eqnarray}
	\struc{|\cdot|}: \ \lf(\C^*\ri)^n \to \R_{> 0}^n, \quad (z_1,\ldots,z_n) \mapsto (|z_1|,
\ldots, |z_n|) \, .
\end{eqnarray}

\begin{lemma}
Let $f \in \C[\mathbf{z}^{\pm 1}]$. The unlog amoeba $\cU(f)$ is semi-algebraic and hence its boundary is (real) algebraic.
\label{Lem:UnlogIntersectionAlgebraic}
\end{lemma}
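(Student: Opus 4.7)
The plan is to invoke the Tarski--Seidenberg theorem, which states that the image of a semi-algebraic set under a polynomial map is semi-algebraic, and more generally that any set definable by a first-order formula in the language of ordered fields using polynomial (in)equalities is semi-algebraic.

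First, I would identify $\C^n$ with $\R^{2n}$ via $z_j = x_j + i y_j$ and observe that $\cV(f)$ corresponds to a semi-algebraic subset of $\R^{2n}$: it is cut out by the two real polynomial equations $\re f = 0$ and $\im f = 0$ (obtained by splitting $f$ into its real and imaginary parts as polynomials in $x_1, y_1, \ldots, x_n, y_n$), intersected with the open semi-algebraic set $\{(x_j,y_j)_j : x_j^2 + y_j^2 > 0 \text{ for all } j\}$ that represents $(\C^*)^n$.

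Next I would push $\cV(f)$ forward under the squared-absolute-value map $\psi : \R^{2n} \to \R^n$, $(x_j, y_j)_{j=1}^n \mapsto (x_j^2 + y_j^2)_{j=1}^n$. This is a polynomial map, so by Tarski--Seidenberg the image $\psi(\cV(f))$ is semi-algebraic in $\R^n$. Since $\cU(f)$ is obtained from $\psi(\cV(f))$ by coordinate-wise square root inside $\R_{>0}^n$, and the coordinate-wise square root is a semi-algebraic homeomorphism on $\R_{>0}^n$ (its graph is defined by $w_j = u_j^2$ together with $u_j, w_j > 0$), the set $\cU(f)$ is itself semi-algebraic. Equivalently, one can write down $\cU(f)$ directly as the first-order definable set
\begin{equation*}
\{w \in \R^n_{>0} \, : \, \exists\, c_1,s_1,\ldots,c_n,s_n \in \R, \ c_j^2 + s_j^2 = 1, \ f(w_1(c_1{+}is_1), \ldots, w_n(c_n{+}is_n)) = 0\},
\end{equation*}
and quote Tarski--Seidenberg directly.

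Finally, for the ``hence'' claim about the boundary, I would use the standard fact that the topological boundary $\partial S$ of a semi-algebraic set $S \subseteq \R^n$ is itself semi-algebraic (it is definable from $S$ by a first-order formula involving only quantification over $\eps > 0$ and points of $\R^n$). Applied to $S = \cU(f)$, this yields a semi-algebraic set $\partial \cU(f)$ of real dimension strictly less than $n$. Its Zariski closure is therefore a proper real algebraic subset of $\R^n$, so $\partial \cU(f)$ is contained in the real algebraic hypersurface cut out by some nonzero polynomial, which is the sense in which the boundary is ``(real) algebraic.'' I do not anticipate any genuine obstacle: the entire argument is a bookkeeping exercise around Tarski--Seidenberg, the only subtle point being the interpretation of ``algebraic'' for the boundary, which I read as ``contained in a real algebraic set.''
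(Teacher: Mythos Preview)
Your proof is correct and essentially identical to the paper's: the paper writes $\cU(f)$ as the projection onto the $|z_j|$-coordinates of the semi-algebraic set $\{f^{\re}=f^{\im}=0,\ u_j^2+v_j^2=1,\ |z_j|>0\}$ and invokes Tarski--Seidenberg, which is precisely your ``equivalently'' formulation with $c_j,s_j$ in place of $u_j,v_j$. Your treatment of the boundary clause is in fact more careful than the paper's, which simply asserts ``thus its boundary is algebraic'' without spelling out that this means containment in a proper real algebraic set via the Zariski closure of the lower-dimensional semi-algebraic boundary.
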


\begin{proof}
First, $\cU(f)$ is closed since $\cU(f)$ is isomorphic to $\cA(f)$ via a componentwise logarithm. Second, we have
$$\cU(f) \ = \ \{|\mathbf{z}| \in \R_{>0}^{n} \ : \ \exists \theta \in [0,2\pi): f(|\mathbf{z}| \cdot e^{i \cdot \theta}) = 0\}.$$
We consider the \struc{\textit{realification}} of $f(\mathbf{z})$, i.e., we consider the real and imaginary parts $\struc{f^{\re}},\struc{f^{\im}} \in \R[\mathbf{x},\mathbf{y}]$ such that 
\[ 
  f(\mathbf{z}) \ = \ 
  f(\mathbf{x}+i\mathbf{y}) \ = \ f^{\re}(\mathbf{x},\mathbf{y}) + i \cdot
f^{\im}(\mathbf{x},\mathbf{y})\,,
\]
see also \cite{Schroeter:deWolff,Theobald:deWolff:SOS}. 
Consider the real semi-algebraic set $S$ given by
\begin{eqnarray*}
f^{\re}(|z_1| \cdot u_1,\ldots,|z_n|\cdot u_n,|z_1| \cdot v_1,\ldots,|z_n|\cdot v_n) & = & 0 \\
f^{\im}(|z_1| \cdot u_1,\ldots,|z_n|\cdot u_n,|z_1| \cdot v_1,\ldots,|z_n|\cdot v_n) & = & 0 \\
u_j^2 + v_j^2 & = & 1 \text{ for all } j = 1,\ldots,n \\
|z_j| & > & 0 \text{ for all } j = 1,\ldots,n. 
\end{eqnarray*}
Then $\cU(f)$ is the projection of $S$ on the variables $|z_1|,\ldots,|z_n|$. Hence, $\cU(f)$ is semi-algebraic by the Tarski-Seidenberg principle; see \cite[Proposition 2.83]{Bochnak:Coste:Roy:RealAlgebraic}. Thus, its boundary is algebraic.
\end{proof}

With this lemma we can justify the term ``generic'' for the genericity condition (2) in Section \ref{Sec:Combinatorics}.

\begin{cor}
Let $A_1,\ldots,A_n\subseteq \Z^n$ be fixed support sets. There exists a Zariski open set $S\subseteq (\C^*)^{A_{1}} \times \cdots \times (\C^*)^{A_{n} }$ such that for $(f_{1},\ldots,f_n)\in S$ and all $1\leq i_1<\cdots <i_k\leq n$ the intersection $\bigcap_{\ell=1}^k \partial(\cA(f_{i_\ell}))$ has codimension $k$.
\end{cor}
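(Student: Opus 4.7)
The plan is to combine Lemma~\ref{Lem:UnlogIntersectionAlgebraic} with a Bertini-type transversality argument, plus a semi-continuity statement for the dimension of an incidence variety. First I would note that, since $\Log|\cdot|$ restricts to a real-analytic diffeomorphism from $\R^n_{>0}$ to $\R^n$, it carries $\partial \cU(f_i)$ to $\partial \cA(f_i)$ preserving codimension. By Lemma~\ref{Lem:UnlogIntersectionAlgebraic} the former is a real-algebraic set, which for a non-constant $f_i$ is of pure codimension one.

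Next I would set up the incidence variety. Using the realification $f_i=f_i^{\re}+i f_i^{\im}$ as in the proof of Lemma~\ref{Lem:UnlogIntersectionAlgebraic}, the boundary $\partial\cA(f_i)$ is, after pullback by $\Log$, contained in the critical-value set of the projection
\[
 \{(\mathbf{x},\mathbf{y},\mathbf{u},\mathbf{v})\ :\ f_i^{\re}=f_i^{\im}=0,\ u_j^2+v_j^2=1\}\ \longrightarrow\ \R^n_{>0},\ (\mathbf{x},\mathbf{y},\mathbf{u},\mathbf{v})\mapsto(|x_j|\cdot u_j+|y_j|\cdot v_j)_j,
\]
and this critical locus is cut out by the vanishing of the defining equations together with appropriate Jacobian minors. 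All these conditions are polynomial in the coefficients of $f_i$ and in the auxiliary variables. For each fixed tuple $i_1<\cdots<i_k$ I would then consider the resulting incidence set
\[
 W_{i_1,\ldots,i_k}\ :=\ \bigl\{\bigl((f_1,\ldots,f_n),\mathbf{w}\bigr)\ :\ \mathbf{w}\in\bigcap_{\ell=1}^k\partial\cA(f_{i_\ell})\bigr\}
\]
as (the image under a semi-algebraic projection of) a constructible set over the complex configuration space $(\C^*)^{A_1}\times\cdots\times(\C^*)^{A_n}$. Upper semi-continuity of fiber dimension then shows that the locus
\[
 T_{i_1,\ldots,i_k}\ :=\ \bigl\{(f_1,\ldots,f_n)\ :\ \dim\bigcap_{\ell=1}^k\partial\cA(f_{i_\ell})>n-k\bigr\}
\]
is Zariski closed in the configuration space, and the finite intersection $S:=\bigcap_{i_1<\cdots<i_k}\bigl((\C^*)^{A_1}\times\cdots\times(\C^*)^{A_n}\setminus T_{i_1,\ldots,i_k}\bigr)$ is Zariski open.

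Finally I would show $S$ is non-empty by exhibiting one tuple $(f_1,\ldots,f_n)$ for which every intersection $\bigcap_{\ell=1}^k\partial\cA(f_{i_\ell})$ has the expected codimension. The natural candidate is a tuple whose amoebas are close enough (in the Hausdorff sense) to their spines that the combinatorics of $\bigcap_{\ell=1}^k\partial\cA(f_{i_\ell})$ matches that of $\bigcap_{\ell=1}^k\cS(f_{i_\ell})$, where for generic coefficients transversality follows from the Tropical Bernstein Theorem~\ref{Thm:TropicalBernsteinTheorem}; alternatively one can work with Harnack polynomials, whose amoeba boundaries are known to be smooth real-algebraic hypersurfaces, and argue transversality directly.

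The main technical obstacle is promoting the semi-algebraic description of $\partial\cA(f_i)$ into a genuinely \emph{complex} Zariski closed condition on the coefficients, so that the bad loci $T_{i_1,\ldots,i_k}$ are Zariski closed (and not merely closed in the Euclidean topology). This is handled by passing from the semi-algebraic unlog boundary to the complexified critical locus of the coordinate projection $\cV(f_i)\to\C^n$ (whose projection to the configuration space is constructible by Chevalley's theorem), so that the dimension-jump locus is cut out by vanishing of appropriate resultants, and hence is Zariski closed as required.
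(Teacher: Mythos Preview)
Your overall strategy matches the paper's: pass to the unlog amoeba via the diffeomorphism $\Log:\R_{>0}^n\to\R^n$, invoke Lemma~\ref{Lem:UnlogIntersectionAlgebraic} to get that each $\partial\cU(f_i)$ is real algebraic, and then argue that the dimension-jump locus in the configuration space is Zariski closed. The paper, however, is far terser than you: after noting that $\bigcap_{\ell}\partial\cU(f_{i_\ell})$ is algebraic, it simply \emph{asserts} that there is a Zariski open set of coefficients on which the intersection has codimension $k$, then pulls back through the diffeomorphism. No incidence variety, no semi-continuity statement, and no explicit non-emptiness argument appear. So your proof is an elaboration of the paper's sketch rather than a different route; your additions (semi-continuity of fiber dimension, an explicit witness for non-emptiness) are precisely the missing justification behind the paper's one-line step.

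Two points to fix in your write-up. First, the displayed projection $(\mathbf{x},\mathbf{y},\mathbf{u},\mathbf{v})\mapsto(|x_j|\cdot u_j+|y_j|\cdot v_j)_j$ is garbled: in the paper's coordinates one has $z_j=|z_j|(u_j+iv_j)$ with $u_j^2+v_j^2=1$, and the relevant projection to $\R_{>0}^n$ is simply $(|z_1|,\ldots,|z_n|,u_1,\ldots,v_n)\mapsto(|z_1|,\ldots,|z_n|)$; the boundary is then contained in the critical-value locus of this coordinate projection (the contour $\cC(f_i)$, cf.\ Appendix~\ref{Sec:AppendixDimension}). Second, the subtlety you flag in your last paragraph---that the bad loci $T_{i_1,\ldots,i_k}$ must be \emph{complex} Zariski closed, not merely real semi-algebraic---is real, and the paper does not address it either. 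Your proposed fix via the complexified critical locus and Chevalley is the right direction, but as written it is still a sketch; if you want a clean statement you should make explicit which polynomial conditions in the coefficients cut out the dimension jump (e.g.\ via elimination/resultants applied to the Jacobian-minor system), since appealing to semi-continuity for a family of \emph{real} semi-algebraic fibers over a \emph{complex} base does not by itself yield Zariski closedness over $\C$.
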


\begin{proof}
As the intersection of a finite number of Zariski open sets, is Zariski open, it suffices to show that, for all $1\leq i_1<\cdots<i_k\leq n$, there exists a Zariski open set $S$ of $(\C^*)^{A_{i_1}} \times \cdots \times (\C^*)^{A_{i_k}}$ such that $\bigcap_{\ell=1}^k \partial(\cA(f_{i_\ell}))$ has codimension $k$ for all $(f_{i_1},\ldots,f_{i_k})\in S$. 
 Since the intersection of the boundaries of the unlog amoebas $\cU(f_{i_1}),\ldots,\cU(f_{i_k})$ is algebraic by Lemma \ref{Lem:UnlogIntersectionAlgebraic}, there exists a Zariski open set $S$ of $(\C^*)^{A_{i_1}} \times \cdots \times (\C^*)^{A_{i_k}}$ such that $\bigcap_{\ell = 1}^k \partial(\cU(f_{i_\ell})))$ has codimension $k$ for all $(f_{i_1},\ldots,f_{i_k})\in S$. Since $\Log : (\R^*)^n \to \R^n$ is a  diffeomorphism, $\Log(\bigcap_{\ell = 1}^k \partial(\cU(f_{i_\ell})))$ has codimension $k$ on $S$. Moreover, as  diffeomorphisms  are preserved under intersections and taking boundaries, we conclude that
  $$ \bigcap_{\ell=1}^k \partial(\cA(f_{i_\ell}))= \bigcap_{\ell = 1}^k \Log\left(\partial(\cU(f_{i_\ell}))\right)= \Log\left(\bigcap_{\ell = 1}^k \partial(\cU(f_{i_\ell}))\right)$$
   has codimension $k$  for all $(f_{i_1},\ldots,f_{i_k})\in S$.
\end{proof}

Now, we show that genericity condition (3) in Section \ref{Sec:Combinatorics} is also satisfied on an open set.

\begin{lemma}
Let $A_1,\ldots,A_n\subseteq \Z^n$ be fixed support sets. Let $W$ be the set of all Laurent polynomials $(f_1,\ldots,f_n) \in (\C^*)^{A} := (\C^*)^{A_1} \times \cdots \times (\C^*)^{A_n}$ such that each $p \in \bigcap_{j = 1}^n \cA(f_j)$  is contained in the closure of at most one component of the complement of $\cA(f_k)$ for $1\leq k\leq n$. Then, the  set $W$ is a non-empty Zariski open subset of $(\C^*)^A$.
\end{lemma}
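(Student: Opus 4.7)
My plan is to reduce the condition defining $W$ to an algebraic non-vanishing condition on the coefficients: first I would encode the ``pinch points'' of each amoeba algebraically, then exhibit a non-empty Zariski open subset of $W$, and finally argue that $W$ itself is Zariski open via an incidence-variety argument.

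Call $p \in \partial \cA(f)$ a \emph{pinch point} of $\cA(f)$ if $p$ lies in the closure of at least two distinct connected components of $\R^n \setminus \cA(f)$, and let $P(f)$ denote the set of pinch points. A tuple $(f_1,\ldots,f_n)$ fails to lie in $W$ precisely when some $k$ satisfies $P(f_k)\cap\bigcap_{j=1}^{n}\cA(f_j)\neq\emptyset$. Via the diffeomorphism $\Log$, pinch points of $\cA(f)$ correspond to singular points of the real algebraic hypersurface $\partial \cU(f) \subseteq \R_{>0}^n$ where two locally separating analytic sheets of the boundary cross. By Lemma \ref{Lem:UnlogIntersectionAlgebraic}, $\partial \cU(f)$ is cut out by a polynomial obtained from the defining relations of $\cV(f)$ via quantifier elimination, and the smoothness of this hypersurface is controlled by the non-vanishing of a discriminant-type polynomial $D_k$ in the coefficients of $f_k$.

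Set $U_k := \{f_k \in (\C^*)^{A_k} : D_k(f_k) \neq 0\}$, which is Zariski open in $(\C^*)^{A_k}$. I would then show $U_k$ is non-empty by arguing that a sufficiently generic Laurent polynomial with support $A_k$ has smooth amoeba boundary, hence empty pinch locus. This can be verified by a Sard-type argument for the logarithmic Gauss map on $\cV(f_k)$, or by explicit perturbation of a known example with smooth boundary supported on $A_k$. Consequently $U := U_1 \times \cdots \times U_n$ is a non-empty Zariski open subset of $(\C^*)^A$, and every tuple in $U$ lies trivially in $W$ since every $\partial \cA(f_k)$ is pinch-free. This already establishes non-emptiness of $W$.

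Finally, to prove Zariski openness of $W$ itself, I would describe $W^c = (\C^*)^A \setminus W$ as the image of a complex algebraic incidence variety under a proper projection. Concretely, for each $k$, I would form an incidence variety
\begin{equation*}
\tilde{I}_k \ \subseteq \ (\C^*)^A \times (\C^*)^n \times (\C^*)^n \times \prod_{j \neq k}(\C^*)^n
\end{equation*}
parameterizing data $(f_1,\ldots,f_n,\mathbf{z}_1,\mathbf{z}_2,(\mathbf{w}_j)_{j \neq k})$ with $\mathbf{z}_1 \neq \mathbf{z}_2$ in $\cV(f_k)$ witnessing a pinch at $\Log|\mathbf{z}_1|$, together with $\mathbf{w}_j \in \cV(f_j)$ certifying $|\mathbf{w}_j| = |\mathbf{z}_1|$ for every $j \neq k$. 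The real modulus identities $|\mathbf{z}_1|=|\mathbf{z}_2|=|\mathbf{w}_j|$ can be rephrased as complex algebraic equations after passing to conjugate copies (using $|z|^2 = z\overline{z}$ with $\overline{z}$ treated as an independent variable subject to a reality constraint that is itself semi-algebraic). Then $W^c = \bigcup_{k=1}^{n}\pi(\tilde{I}_k)$, where $\pi$ is the projection onto $(\C^*)^A$. The main obstacle will be showing that each $\pi(\tilde{I}_k)$ is Zariski closed; this amounts to a properness argument, which I expect to follow by passing to appropriate projective or toric closures and verifying that no pinch witness escapes to infinity along the fibers. Once closedness of the projection is established, $W^c$ is Zariski closed in $(\C^*)^A$, hence $W$ is Zariski open, completing the proof.
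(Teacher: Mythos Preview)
Your proposal has a genuine gap at the point where you try to algebraicize the pinch condition. In the incidence variety $\tilde I_k$ you ask for $\mathbf z_1\neq\mathbf z_2$ in $\cV(f_k)$ ``witnessing a pinch at $\Log|\mathbf z_1|$'', but you never specify equations that encode this; merely having two distinct preimages with the same absolute value is vacuous, since every interior point of $\cA(f_k)$ already has an $(n-1)$-dimensional real fiber under $\Log|\cdot|$. The assertion that $\Log|\mathbf z_1|$ lies in the closures of two \emph{distinct} complement components is a topological statement about the global structure of $\R^n\setminus\cA(f_k)$, and there is no evident polynomial certificate for it---so the variety whose projection you want to prove closed has not actually been defined. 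The earlier discriminant step shares the same defect: you posit a single polynomial $D_k$ in the coefficients whose non-vanishing rules out pinch points, but $\partial\cU(f_k)$ is only a semi-algebraic \emph{subset} of the contour, and you give no argument that its pinch locus is governed by one algebraic equation on $(\C^*)^{A_k}$.

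The paper bypasses this issue entirely. For non-emptiness it quotes Rullg{\aa}rd's theorem that each $(\C^*)^{A_i}$ contains a polynomial whose amoeba is \emph{maximally sparse} (one complement component per vertex of $\New(f_i)$), observes via Theorem~\ref{Thm:GKZAmoebaVertex} and convexity that such an amoeba has no pinch points whatsoever, and then invokes lower semi-continuity of complement components under coefficient perturbation (Forsberg--Passare--Tsikh) to obtain a Euclidean open ball $B_\eps\subseteq W$. For Zariski openness the paper simply takes the Zariski closure $\overline P$ of the bad locus $P=(\C^*)^A\setminus W$ and notes that $\overline P$ misses $B_\eps$, hence is proper. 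Strictly speaking that last step only exhibits a non-empty Zariski open set \emph{inside} $W$ rather than showing $W$ itself is Zariski open, so neither argument is fully airtight on that final point; but the paper's non-emptiness argument is both shorter and rigorous, and you should replace your discriminant/Sard sketch with it.
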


\begin{proof}
By a theorem of Rullg{\aa}rd \cite[Thm. 13, p. 36]{Rullgard:Diss} there exists an $f_i \in (\C^*)^{A_i}$ for every $i = 1,\ldots,n$ such that the corresponding amoeba $\cA(f_i)$ is \textit{\struc{maximally sparse}}, i.e., the number of connected components of the complement of $\cA(f_i)$ equals the number of vertices of $\New(f_i)$. Recall that this is the minimal number of possible connected components; see Theorem \ref{Thm:GKZAmoebaVertex}. By Theorem \ref{Thm:GKZAmoebaVertex} and the convexity of every connected component of the complement of $\cA(f_i)$, it also follows that a maximally sparse amoeba does not contain points in its boundary, which are contained in the closure of more than one component of the complement. Since, by a theorem of Forsberg, Passare, and Tsikh \cite[Prop. 1.2]{Forsberg:Passare:Tsikh}, components of the complement of an amoeba are lower semi-continuous under a change of coefficients, there exists an open (with respect to the standard topology) $\eps$-ball $B_\eps(f_i)$ around $f_i$ in $(\C^*)^{A_i}$ such that for any $g\in B_\eps(f_i)$ the amoeba $\cA(g)$ does not have points on its boundary lying in more than one complement component. Since we can apply the same argument to any $f_i$, we find an open (with respect to standard topology) $\eps$-ball $B_{\eps}$ around $(f_1,\ldots,f_n) \in (\C^*)^{A}$ such that each $p \in \bigcap_{j = 1}^n \cA(f_j)$  is contained in the closure of at most one component of the complement of $\cA(f_k)$ for $1\leq k\leq n$. Now, we consider  the Zariski closure $\overline{P}$ of the set $P$ of all families of polynomials $(g_1,\ldots,g_n) \in (\C^*)^{A}$ such that there exists a $p \in \bigcap_{j = 1}^n \cA(g_j)$ which is contained in the closure of to components of the complement of some $\cA(g_k)$. Then $\overline{P}$ is not equal to $(\C^*)^A$, since it does not contain $B_{\eps}$. Thus, the set $W$ is a Zariski open subset of $(\C^*)^A$.
\end{proof}

\section{Dimension}
\label{Sec:AppendixDimension}

It is well-known that the boundary of a single amoeba $\cA(f)$ is contained in the $\Log|\cdot|$-image of all points in $\cV(f)$, which are critical under the $\Log|\cdot|$-map. This image is called the \textit{\struc{contour}} of $\cA(f)$ and denoted by  $\struc{\cC(f)}$.  
Since the boundary of a single amoeba $\cA(f)$ is contained in the contour $\struc{\cC(f)}$ in $\R^n$, which is closed real-analytic hypersurface in $\cA(f) \subset \R^n$ (see \cite{Passare:Tsikh:Survey}), we have a canonical notion of \textit{\struc{dimension}} (using the standard topology in $\R^n$). This notion of dimension extends to intersection of amoebas in $\R^n$ in the usual way for intersections of analytical sets.

\bibliographystyle{plain}
\bibliography{AmoebaIntersection}

\end{document}